\newtheorem{theorem}{Theorem}[section]
\newtheorem{proposition}[theorem]{Proposition}
\newtheorem{lemma}[theorem]{Lemma}
\newtheorem{corollary}[theorem]{Corollary}
\theoremstyle{definition}
\newtheorem{definition}[theorem]{Definition}
\newtheorem{remark}[theorem]{Remark}
\newtheorem{conjecture}[theorem]{Conjecture}
\newcounter{RomanNumber}
\newcommand{\calP}{\ensuremath{\mathcal{P}}} 
\newcommand{\calQ}{\ensuremath{\mathcal{Q}}} 
\newcounter{bean}
\newenvironment{letterlist}{\begin{list}{\rm ({\alph{bean}})}
      {\usecounter{bean}\setlength{\rightmargin}{\leftmargin}}}
      {\end{list}}
\newcommand{\nameddright}[5]{\ensuremath{#1\stackrel{#2}
 {\longrightarrow}#3\stackrel{#4}{\longrightarrow}#5}}
\newcommand{\larrow}{\relbar\!\!\relbar\!\!\rightarrow}
\newcommand{\llarrow}{\relbar\!\!\relbar\!\!\larrow}
\newcommand{\lllarrow}{\relbar\!\!\relbar\!\!\llarrow}
\newcommand{\llnameddright}[5]{\ensuremath{#1\stackrel{#2}
 {\llarrow}#3\stackrel{#4}{\llarrow}#5}}
\newcommand{\qqed}{\hfill\Box}
\begin{document}


\title{Local hyperbolicity, inert maps and applications in Moore's conjecture} 

\author{Ruizhi Huang} 
\address{State Key Laboratory of Mathematical Sciences \& Institute of Mathematics, Academy of Mathematics and Systems Science, 
   Chinese Academy of Sciences, Beijing 100190, China} 
\email{huangrz@amss.ac.cn} 
   \urladdr{https://sites.google.com/site/hrzsea/}

\subjclass[2010]{Primary 
55Q52, 
55P62; 
Secondary 
55P35, 
55P40, 
}
\keywords{hyperbolic, Moore's conjecture, inert maps}


\begin{abstract} 
We show that the base space of a homotopy cofibration is locally hyperbolic under various conditions. In particular, if these manifolds admit a rationally elliptic closure, then almost all punctured manifolds and almost all manifolds with rationally spherical boundary are $\mathbb{Z}/p^r$-hyperbolic for almost all primes $p$ and all integers $r \geq 1$, and satisfy Moore's conjecture at sufficiently large primes.
\end{abstract}

\maketitle


\section{Introduction} 

Let $X$ be a simply connected space of finite type with finite rational Lusternik-Schnirelmann category. The classical rational dichotomy~\cite{FHT82, FHT01} asserts that $X$ is either
\begin{itemize}
  \item[-] \emph{rationally elliptic}, meaning that 
    $\pi_\ast(X) \otimes \mathbb{Q}$ is finite dimensional, or
  \item[-] \emph{rationally hyperbolic}, meaning that 
    $\pi_\ast(X) \otimes \mathbb{Q}$ grows exponentially. 
\end{itemize}

Let $p$ be a prime. A space $X$ has \emph{homotopy exponent} $p^{r}$ 
if $r$ is the least power of $p$ that annihilates the $p$-torsion in $\pi_{\ast}(X)$. In this case, 
write $\exp_{p}(X)=p^{r}$. If there is no such power of $r$, that is, if $\pi_{\ast}(X)$ has  
$\mathbb{Z}/p^{r}$ summands for arbitrarily large $r$, write $\exp_{p}(X)=\infty$. 

Moore's conjecture posits a profound relationship between homotopy exponents and the rational dichotomy. \begin{conjecture}[Moore]  
\label{Moorehyper}
Let $X$ be a simply-connected finite $CW$-complex. Then the following are equivalent: 
\begin{letterlist} 
   \item $X$ is rationally hyperbolic; 
   \item $\exp_{p}(X)=\infty$ for some prime $p$; 
   \item $\exp_{p}(X)=\infty$ for all primes $p$. 
\end{letterlist} 
\end{conjecture}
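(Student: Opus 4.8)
The final statement is Moore's conjecture itself, a long-standing open problem; accordingly this is not a plan for an unconditional proof but an outline of the program by which the equivalence is attacked, isolating where it becomes genuinely hard. Note first that (c) $\Rightarrow$ (b) is immediate, so the content lies in (b) $\Rightarrow$ (a) and (a) $\Rightarrow$ (c), and it suffices to establish the cycle (a) $\Rightarrow$ (c) $\Rightarrow$ (b) $\Rightarrow$ (a). Since $X$ is a simply-connected finite complex it has finite rational Lusternik--Schnirelmann category, so by the rational dichotomy of Félix--Halperin--Thomas it is either rationally elliptic or rationally hyperbolic; hence (b) $\Rightarrow$ (a) is logically equivalent to its contrapositive, namely that a rationally elliptic finite complex has finite homotopy exponent at every prime.

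For the contrapositive form of (b) $\Rightarrow$ (a), the plan is to exploit the structure theory of elliptic spaces. After $p$-completion with $p$ large relative to the formal dimension, such a space is built from a finite tower of principal fibrations whose fibres are odd spheres and Eilenberg--MacLane spaces $K(\mathbb{Z},2n)$; one then propagates the Cohen--Moore--Neisendorfer sphere-exponent bounds (and the trivial exponent facts for the even Eilenberg--MacLane layers) up the tower using the homotopy long exact sequences, together with the elementary fact that the exponent of a total space is controlled by those of base and fibre in such a fibration. For the finitely many small primes one would instead invoke exponent results for the exceptional building blocks case by case.

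For (a) $\Rightarrow$ (c), the substantive direction, the strengthened target is that $X$ is $\mathbb{Z}/p^r$-hyperbolic for all $r \geq 1$ and all $p$, which forces $\exp_p(X)=\infty$. The plan is to present $X$ (or a convenient suspension/localization) as the cofibre of a map with inert attaching data, so that the James--Hilton--Milnor and Ganea decompositions split off from $\Omega X$ a loop factor on a wedge whose rational homotopy already grows exponentially --- this is exactly where the rational-hyperbolicity hypothesis enters, via the Félix--Halperin--Thomas estimates on $\dim \pi_n(X)\otimes\mathbb{Q}$. One then transports this exponential growth through a mod-$p^r$ comparison: a James-type exponential growth of $H_\ast(\Omega X;\mathbb{Z}/p)$, fed through the mod-$p$ homotopy decomposition of a wedge of Moore spaces, produces an exponential supply of $\mathbb{Z}/p^r$ summands in $\pi_\ast(X)$.

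The main obstacle is twofold, and is precisely why the conjecture remains open. On the elliptic side there is no known uniform bound: proving that \emph{every} rationally elliptic finite complex has finite homotopy exponent at \emph{every} prime is itself unsolved, having been verified only for special families (for instance $H$-spaces, two-cell and two-cone complexes, spheres and projective spaces, compact Lie groups and their homogeneous spaces, and total spaces of the standard sphere bundles), so an unconditional (b) $\Rightarrow$ (a) is out of reach with current tools. On the hyperbolic side, converting rational exponential growth into $p$-primary infinitude requires controlling $p$-torsion in $\Omega X$ uniformly in degree, and the available splitting and inertness techniques degrade at small primes, where the Moore-space smash decompositions and the Hilton--Milnor splitting carry torsion obstructions; this is why clean conclusions are obtained only at sufficiently large primes, or under the extra hypothesis of a rationally elliptic closure, which supplies the missing structural control. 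Thus the realistic outcome of this program is the conditional theorem announced in the abstract rather than the full equivalence.
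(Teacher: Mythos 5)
The statement is Moore's conjecture, an open problem; the paper offers no proof of it, only conditional results in special cases, and you correctly treat it as such rather than claiming a proof. Your outline of the program --- reducing (b) $\Rightarrow$ (a) to finite exponents for elliptic spaces (the McGibbon--Wilkerson large-prime result the paper cites), and attacking (a) $\Rightarrow$ (c) via inert cofibrations, loop-space splittings, and $\mathbb{Z}/p^r$-hyperbolicity --- accurately reflects both the state of the art and the methods the paper actually uses for its conditional theorems.
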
 
The conjecture can be equivalently reformulated in terms of rationally elliptic spaces. Additionally, to formulate a weak version of Moore's conjecture, statement (c) may be relaxed to require that $\exp_{p}(X)=\infty$ for all but finitely many primes $p$.

Moore's conjecture has been proved in certain special cases but is very far from being resolved. For instance, it has been verified for spheres and for mod-$p^r$ Moore spaces with $p>2$ or $r>1$ \cite{CMN79a, CMN79b, Nei81, Sel84, Nei87, Coh89}. However, the conjecture remains open for mod-$2$ Moore spaces, which represent the first genuinely difficult case, even though the growth of their exponents has been intensively studied \cite{CW95, ChW13, MW16}. For finite complexes at large primes, Moore's conjecture often reduces to the cases of Moore spaces and spheres via loop space decompositions \cite{MW86, Ani89}. Relevant research on Moore's conjecture in special cases can be found in classical references \cite{Lon78, NS82, Sel83, Sta02, Ste04, CPSS08}. More recently, intensive progress on the Moore's conjecture has been made for Poincar\'{e} duality complexes \cite{BW15, BB18, BT22, HT22, Hua23, HT23, The24a, The24b, Hua25} and generalized moment-angle complexes \cite{HST19, Kim18}, by combining loop space decomposition techniques with cohomology ring or combinatorial structures. In addition, unusual connections between homotopy groups and group theory \cite{Coh95, Wu01, CW04, Wu10, CW11, LW11, BW13, CMW18, HW20b, BM23} might give new clues for attacking Moore's conjecture in an unexpected way.

 Moore's conjecture asserts that rationally hyperbolic spaces have torsion homotopy groups 
of arbitrarily high order. But it says nothing about the rate of growth of the $p$-torsion 
in the homotopy groups. To address this, Huang and Wu~\cite{HW20a} introduced 
the notion of local hyperbolicity in analogy with rational hyperbolicity, and showed that Moore spaces are locally hyperbolic. This direction was subsequently explored in greater depth by Zhu-Pan \cite{ZP21}, Boyde \cite{Boy22, Boy24}, Huang-Theriault \cite{HT24} and Boyde-Huang \cite{BH24}.

\begin{definition}\label{hyperdef}
A $CW$-complex $X$ is called {\it $\mathbb{Z}/p^r$-hyperbolic} if the number of $\mathbb{Z}/p^r$-summands in $\pi_\ast(X)$ has exponential growth, that is,
\[
\liminf_n\frac{{\rm ln}~ t_n}{n}>0,
\]
where $t_n=\sharp ~\{ \text{$\mathbb{Z}/p^r$-summands in $\mathop{\bigoplus}\limits_{m\leq n} \pi_m(X)$}\}$.
\end{definition}

In this paper, we investigate local hyperbolicity and Moore's conjecture within the framework of homotopy cofibrations under an inertness assumption, and establish that the base space of a homotopy cofibration is locally hyperbolic. 
Let $A\stackrel{h}{\larrow} X\stackrel{\varphi}{\larrow} Y$ be a homotopy cofibration. Following \cite{FHT82, The24a, Hua24}, the map $h$ is said to be {\it inert} if $\Omega \varphi$ has a right homotopy inverse. The following result highlights a situation in which rational homotopy and local homotopy are closely intertwined.

\begin{theorem}\label{elliptic-hyper-thm-intro}
Let $\Sigma A\stackrel{h}{\larrow} X\stackrel{\varphi}{\larrow} Y$ be a homotopy cofibration of simply connected finite $CW$-complexes such that both $\Sigma A$ and $Y$ are not rationally contractible. Suppose that $h$ is rationally inert. If $Y$ is rationally elliptic, then $X$ is rationally hyperbolic and $\mathbb{Z}/p^r$-hyperbolic for almost all primes $p$ and all $r\geq 1$.

In particular, Moore's conjecture holds for $X$ for all but finitely many primes $p$.
\end{theorem}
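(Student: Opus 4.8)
The plan is to translate the inertness hypothesis into a loop space decomposition of $X$ whose ``error term'' is a suspension with enormous homology, and then run the rational dichotomy and the known local hyperbolicity of wedges of spheres against it. Concretely, I would first invoke the standard splitting attached to an inert cofibration: if $\Sigma A\xrightarrow{h}X\xrightarrow{\varphi}Y$ is a homotopy cofibration with $h$ inert and $F$ denotes the homotopy fiber of $\varphi$, then $F\simeq(\Omega Y)\ltimes\Sigma A\simeq\Sigma A\vee\Sigma(\Omega Y\wedge A)$ --- in particular $F$ is a simply connected suspension --- and $\Omega X\simeq\Omega Y\times\Omega F$. When $h$ is only rationally inert the same decomposition holds after rationalization, and, crucially, $p$-locally for all but finitely many primes $p$; establishing this last assertion is the technical heart of the argument (discussed below).

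For rational hyperbolicity: since $Y$ is a simply connected rationally elliptic finite complex that is not rationally contractible, its rational homotopy Lie algebra $L_Y=\pi_*(\Omega Y)\otimes\mathbb{Q}$ is nonzero and finite dimensional, and the inequality $\dim\pi_{\mathrm{even}}(Y)\otimes\mathbb{Q}\le\dim\pi_{\mathrm{odd}}(Y)\otimes\mathbb{Q}$ valid for elliptic spaces forces $L_Y$ to contain a nonzero element of even homological degree; hence $H_*(\Omega Y;\mathbb{Q})=U(L_Y)$ contains a polynomial subalgebra and is infinite dimensional. As $\widetilde H_*(A;\mathbb{Q})\ne 0$ (because $\Sigma A$ is not rationally contractible), $\Sigma(\Omega Y\wedge A)$ has infinite dimensional reduced rational homology, so $F$ is rationally equivalent to a wedge of infinitely many simply connected spheres; therefore $\pi_*(F)\otimes\mathbb{Q}$, being the free graded Lie algebra on a graded vector space of dimension at least two, grows exponentially. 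Because $\Omega F$ is a rational homotopy retract of $\Omega X$, the same holds for $\pi_*(X)\otimes\mathbb{Q}$, and since $X$ is a finite complex the dichotomy forces $X$ to be rationally hyperbolic.

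For the local statement, fix a prime $p$ outside the finite exceptional set. The $p$-local splitting exhibits $\Omega F$ as a $p$-local retract of $\Omega X$, hence $\pi_*(F)$ as a retract of $\pi_*(X)$ after $p$-localization, so it suffices to prove that $F$ is $\mathbb{Z}/p^r$-hyperbolic for every $r\ge 1$. For $p$ large, $\Sigma A$ is $p$-locally a wedge of spheres, and --- again by ellipticity of $Y$ --- $\Omega Y$ is $p$-locally a finite product of spheres and loop spaces of odd spheres, so $\Sigma(\Omega Y\wedge A)=(\Omega Y)\wedge\Sigma A$ is $p$-locally a wedge of spheres; thus $F$ is $p$-locally a wedge of at least two simply connected spheres. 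By the Cohen--Moore--Neisendorfer theorem (odd spheres carry $\mathbb{Z}/p^r$-summands in their homotopy for every $r$) together with the Hilton--Milnor theorem (a wedge of two or more spheres produces exponentially many of them) --- the Huang--Wu mechanism --- such a wedge, and hence $X$, is $\mathbb{Z}/p^r$-hyperbolic for all $r\ge 1$. As this forces $\exp_p(X)=\infty$ for all but finitely many $p$, combining with rational hyperbolicity yields the (weak form of the) Moore conjecture for $X$.

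The main obstacle is the parenthetical claim above: upgrading rational inertness of $h$ to $p$-local inertness for all but finitely many primes. The obstruction to $p$-locally sectioning $\Omega\varphi$ --- equivalently, to $p$-locally null-homotoping the connecting map $\Omega Y\to F$ of the fibration $F\to X\xrightarrow{\varphi}Y$ --- a priori spreads over infinitely many cohomology groups of the infinite dimensional space $\Omega Y$, so one cannot simply invert finitely many primes to clear it. This is precisely where rational ellipticity of $Y$ is indispensable: it determines the $p$-local homotopy type of $\Omega Y$ for large $p$ (a finite product of spheres and loop spaces of odd spheres), and this rigidity is exactly what lets one conclude that the rationally trivial obstruction is already trivial $p$-locally outside a finite set of primes. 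Once this is in place, the remainder is assembly from the loop space splitting, the rational dichotomy, and the local hyperbolicity of wedges of spheres.
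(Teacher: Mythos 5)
Your architecture is exactly the paper's: upgrade rational inertness to $p$-local inertness for almost all $p$ using ellipticity of $Y$, apply the Beben--Theriault splitting $\Omega X\simeq \Omega Y\times\Omega(\Sigma A\vee(\Sigma\Omega Y\wedge A))$, observe that at large primes the fibre is a wedge of at least two simply connected spheres (McGibbon--Wilkerson plus James for $\Sigma\Omega Y$, and the splitting of $\Sigma A$ into spheres away from $\mathcal{P}(A)$), and finish with the known $\mathbb{Z}/p^r$-hyperbolicity of $S^a\vee S^b$ (the paper cites Boyde rather than re-running Hilton--Milnor plus Cohen--Moore--Neisendorfer, but that is the same mechanism). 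Your rational-hyperbolicity argument via the infinite-dimensionality of $H_*(\Omega Y;\mathbb{Q})$ is a mild variant of the paper's (which only needs two spheres in the fibre); both are fine.

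The genuine gap is the step you yourself flag as the technical heart. You correctly identify that the difficulty is that the obstruction to sectioning $\Omega\varphi$ $p$-locally lives in infinitely many cohomology groups of $\Omega Y$, and you correctly identify that McGibbon--Wilkerson rigidity of $\Omega Y$ at large primes is the relevant input --- but the sentence ``this rigidity is exactly what lets one conclude that the rationally trivial obstruction is already trivial $p$-locally outside a finite set of primes'' is not an argument, and an obstruction-theoretic approach does not obviously close: even with $\Omega Y\simeq\prod_i S^{2m_i-1}\times\prod_j\Omega S^{2n_j-1}$ at large $p$, each factor $\Omega S^{2n_j-1}$ is infinite-dimensional, so one still cannot invert finitely many primes to kill infinitely many obstruction groups. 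The paper avoids obstruction theory entirely and \emph{constructs} the section (Proposition \ref{Qpinverse-prop}): for each factor $S^{2m_i-1}$, the rational section restricted to that factor is a rational multiple $\tfrac{a}{b}$ of an integral generator of a $\mathbb{Z}$-summand of $\pi_{2m_i-1}(\Omega X)$, and composing back with $\Omega\varphi$ gives a self-map of $S^{2m_i-1}$ of degree $\tfrac{b}{a}$, which must be an integer, forcing $a=\pm1$; inverting $b$ gives a genuine local section on that factor. For each factor $\Omega S^{2n_j-1}$ one runs the same argument on the bottom cell $S^{2n_j-2}$ and then extends over $\Omega S^{2n_j-1}$ by the universal property of the James construction --- which requires first arranging (Remark \ref{McWremark}) that the McGibbon--Wilkerson equivalence, and hence the projection of $\Omega\varphi$ to that factor, is an $H$-map. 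Finally the factorwise sections are multiplied together using the loop structure of $\Omega X$ and the composite is checked to be a homology isomorphism. Without some such constructive argument (or an equivalent one), the passage from rational to local inertness --- and with it the whole theorem --- remains unproved.
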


In \cite[Corollary 5.3]{HT24}, the same conclusion is established under the assumption that $h$ is integrally inert. In contrast, Theorem \ref{elliptic-hyper-thm-intro} requires only that $h$ be rationally inert. It is important to note that rational inertness is a considerably weaker condition than integral or local inertness. The key contribution of Theorem \ref{elliptic-hyper-thm-intro} lies in deriving a local result from a purely rational assumption.

The concept of rational inertness is classical in rational homotopy theory \cite{FHT82} and was widely studied \cite{Ani86, FT89, HL95, HeL96, Bub05, FHT07}. A fundamental result of Halperin and Lemaire \cite{HL87} shows that the attaching map for the top cell of any Poincar\'{e} duality complex is rationally inert unless its rational cohomology algebra is generated by a single element. With this result, we can show the following theorem from Theorem \ref{elliptic-hyper-thm-intro}. 
For a simply-connected $m$-dimensional Poincar\'{e} Duality complex $M$, denoted by $\overline{M}$ its $(m-1)$-skeleton. 

\begin{theorem}\label{Melliptic-hyper-thm-intro}
Let $M$ be a simply connected Poincar\'{e} Duality complex such that its rational homology algebra is not generated by a single element. If $M$ is rationally elliptic, then $\overline{M}$ is rationally hyperbolic and $\mathbb{Z}/p^r$-hyperbolic for almost all primes $p$ and all $r\geq 1$.

In particular, Moore's conjecture holds for $\overline{M}$ for all but finitely many primes $p$.
\end{theorem}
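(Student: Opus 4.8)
The plan is to obtain Theorem~\ref{Melliptic-hyper-thm-intro} as a direct application of Theorem~\ref{elliptic-hyper-thm-intro} to the homotopy cofibration cut out by the top cell of $M$. First I would fix a minimal CW model: since $M$ is a simply connected finite $m$-dimensional Poincar\'{e} Duality complex, it is homotopy equivalent to a complex of the form $\overline M\cup_{h}e^{m}$ where $\overline M$ is its $(m-1)$-skeleton and $h\colon S^{m-1}\larrow \overline M$ attaches the top cell. This yields a homotopy cofibration
\[
S^{m-1}\stackrel{h}{\larrow}\overline M\stackrel{\varphi}{\larrow}M,
\]
in which $\overline M$ is a finite CW complex, simply connected once $m\geq 3$.

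Next I would dispose of the low-dimensional cases, which is exactly what lets us put the cofibration into the form required by Theorem~\ref{elliptic-hyper-thm-intro}. If $m\leq 3$, a simply connected $m$-dimensional Poincar\'{e} Duality complex is homotopy equivalent to $S^{2}$ or $S^{3}$, whose rational cohomology algebra is generated by a single element, contradicting the hypothesis on $M$. Hence $m\geq 4$, so $S^{m-1}=\Sigma S^{m-2}$ is a simply connected suspension, all three spaces $\Sigma A=S^{m-1}$, $X=\overline M$, $Y=M$ are simply connected finite CW complexes, and neither $S^{m-1}$ nor $M$ is rationally contractible (the former has nonzero reduced rational homology, the latter carries a fundamental class). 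By hypothesis $Y=M$ is rationally elliptic.

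The key input is the theorem of Halperin and Lemaire \cite{HL87}: since $H^{*}(M;\mathbb{Q})$ is not generated by a single element, the top-cell attaching map $h$ is rationally inert. All hypotheses of Theorem~\ref{elliptic-hyper-thm-intro} are then satisfied with $\Sigma A=S^{m-1}$, $X=\overline M$ and $Y=M$, and it gives that $\overline M$ is rationally hyperbolic and $\mathbb{Z}/p^{r}$-hyperbolic for almost all primes $p$ and all $r\geq 1$; in particular Moore's conjecture holds for $\overline M$ at all but finitely many primes.

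I do not expect a serious obstacle here: granted Theorem~\ref{elliptic-hyper-thm-intro} and the Halperin--Lemaire result, the argument is essentially bookkeeping. The only points that genuinely require care are choosing the CW/cofibration model so that $h$ is literally the top-cell attaching map to which Halperin--Lemaire applies, and checking that the spaces removed by the monogenic-cohomology hypothesis — which rationally are precisely the spheres — are exactly the degenerate cases in which the set-up breaks down.
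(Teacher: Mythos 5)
Your proposal is correct and follows the same route as the paper: both apply the Halperin--Lemaire inertness criterion to the top-cell attaching map of $M$ and then invoke Theorem~\ref{elliptic-hyper-thm-intro}, with the paper merely routing through an intermediate Theorem~\ref{Melliptic-hyper-thm} (phrased for rational Poincar\'e Duality complexes whose dimension equals their formal dimension) before specializing to honest Poincar\'e Duality complexes. Your explicit exclusion of $m\leq 3$, ensuring $S^{m-1}=\Sigma A$ with $A$ simply connected as required by Theorem~\ref{elliptic-hyper-thm-intro}, is a bookkeeping step that the paper leaves implicit.
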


Theorems \ref{elliptic-hyper-thm-intro} and \ref{Melliptic-hyper-thm-intro} provide supporting evidence for \cite[Conjecture 1.6]{HT24}, which claims that rational hyperbolicity should imply $\mathbb{Z}/p^r$-hyperbolicity for all primes $p$ and all $r\geq 1$. 

As an application of Theorems \ref{elliptic-hyper-thm-intro} and \ref{Melliptic-hyper-thm-intro}, two results concerning manifolds are established. A {\it punctured manifold} $M\backslash B$ is a manifold $M$ with a finite set $B$ of points removed.  

\begin{theorem}\label{punct-thm}
Let $M$ be a simply connected closed manifold such that its rational homology algebra is not generated by a single element. If $M$ is rationally elliptic, then its punctured manifold $M\backslash B$ is rationally hyperbolic and satisfies the following:
\begin{itemize}
\item when $\#B=1$ or $2$, $M\backslash B$ is $\mathbb{Z}/p^r$-hyperbolic for almost all primes $p$ and all $r\geq 1$;
\item when $ \#B\geq 3$, $M\backslash B$ is $\mathbb{Z}/p^r$-hyperbolic for all primes $p$ and all $r\geq 1$.
\end{itemize}

In particular, Moore's conjecture holds for $M\backslash B$ for all but finitely many primes $p$.
\end{theorem}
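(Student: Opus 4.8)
Put $m=\dim M$; since $M$ is simply connected, closed and has non-monogenic rational homology algebra, $m\geq 4$, so $m-1\geq 3$ and all spheres appearing below are simply connected. The plan is to reduce Theorem~\ref{punct-thm} to Theorem~\ref{Melliptic-hyper-thm-intro} together with the known $\mathbb{Z}/p^{r}$-hyperbolicity of wedges of spheres, and the first --- and, I expect, the only substantial --- step is to show that for $k=\#B$,
\[
M\backslash B\;\simeq\;\overline{M}\vee\bigvee_{k-1}S^{m-1}.
\]
Fix a CW structure on $M$ with a single top cell, $M=\overline{M}\cup_{f}e^{m}$ with attaching map $f\colon S^{m-1}\to\overline{M}$, so that $\overline{M}$ is the $(m-1)$-skeleton and $M\backslash\{\mathrm{pt}\}\simeq\overline{M}$ (which is not rationally contractible, else $M$ would be a rational sphere). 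Arrange all points of $B$ to lie in the interior of $e^{m}$; then $M\backslash B=\overline{M}\cup_{f}(e^{m}\backslash B)$ is the homotopy pushout of $\overline{M}\xleftarrow{f}S^{m-1}\xrightarrow{j}e^{m}\backslash B$, where $j$ is the boundary inclusion. Now $e^{m}\backslash B$ is a closed $m$-disk with $k$ interior points deleted, hence homotopy equivalent to $\bigvee_{i=1}^{k}S^{m-1}$; this wedge is $(m-2)$-connected, so Hurewicz identifies $\pi_{m-1}$ with $\mathbb{Z}^{k}$, and $[j]$ corresponds to the class of a sphere enclosing all $k$ punctures, i.e.\ the primitive vector $(1,\dots,1)$. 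An $\mathrm{SL}_{k}(\mathbb{Z})$-change of basis, realized by a self-homotopy-equivalence of $\bigvee_{i}S^{m-1}$, transports the pushout to one in which the gluing map becomes the inclusion of a single wedge summand; such a pushout splits off the remaining summands, leaving $\mathrm{hocolim}\bigl(\overline{M}\xleftarrow{f}S^{m-1}\xrightarrow{\;\mathrm{id}\;}S^{m-1}\bigr)\vee\bigvee_{k-1}S^{m-1}=\overline{M}\vee\bigvee_{k-1}S^{m-1}$. The geometry here is transparent; the care lies in pinning down the class $[j]$ and carrying out the change of basis.

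\textbf{Step 2: the three cases.} Theorem~\ref{Melliptic-hyper-thm-intro} applies to the simply connected rationally elliptic Poincar\'{e} duality complex $M$ and gives that $\overline{M}$ is rationally hyperbolic and $\mathbb{Z}/p^{r}$-hyperbolic for almost all primes $p$ and all $r\geq 1$. If $\#B=1$, Step~1 gives $M\backslash B\simeq\overline{M}$ and we are done. If $\#B=2$, then $M\backslash B\simeq\overline{M}\vee S^{m-1}$; since $\overline{M}$ is a retract of this space, $\pi_{\ast}(\overline{M})$ is a direct summand of $\pi_{\ast}(M\backslash B)$, so the dimension of $\pi_{\ast}(M\backslash B)\otimes\mathbb{Q}$ and the counting function $t_{n}$ of Definition~\ref{hyperdef} dominate those of $\overline{M}$, whence rational and $\mathbb{Z}/p^{r}$-hyperbolicity (over the same set of primes) are inherited. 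If $\#B\geq 3$, then $M\backslash B\simeq\overline{M}\vee\bigvee_{k-1}S^{m-1}$ with $k-1\geq 2$, so $S^{m-1}\vee S^{m-1}$ is a retract of $M\backslash B$; a wedge of two (or more) simply connected spheres is $\mathbb{Z}/p^{r}$-hyperbolic for \emph{all} primes $p$ and all $r\geq 1$ --- via the Hilton--Milnor splitting of $\Omega(S^{m-1}\vee S^{m-1})$ into looped spheres $\Omega S^{d}$ with $d$ unbounded, each factor $\Omega S^{2N+1}$ with $N\geq r$ contributing $\mathbb{Z}/p^{r}$-summands while the number of such factors grows exponentially (cf.\ \cite{HW20}). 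Hence $M\backslash B$ is $\mathbb{Z}/p^{r}$-hyperbolic for all $p$ and all $r\geq 1$, and also rationally hyperbolic.

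\textbf{Step 3: Moore's conjecture.} In every case $M\backslash B$ is rationally hyperbolic and, being $\mathbb{Z}/p^{r}$-hyperbolic for all $r\geq 1$ at all but finitely many primes $p$, satisfies $\exp_{p}(M\backslash B)=\infty$ for all but finitely many $p$. As $M\backslash B$ has the homotopy type of a finite simply connected complex, this is precisely the assertion that Moore's Conjecture~\ref{Moorehyper} holds for $M\backslash B$ at all but finitely many primes, completing the proof. The main obstacle throughout is Step~1; once the homotopy decomposition is in hand, Steps~2 and~3 are formal consequences of Theorem~\ref{Melliptic-hyper-thm-intro}, the stability of (local and rational) hyperbolicity under retracts, and the hyperbolicity of wedges of spheres.
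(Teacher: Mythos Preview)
Your proof is correct and follows essentially the same approach as the paper: both use the splitting $M\backslash B\simeq\overline{M}\vee\bigvee_{\#B-1}S^{m-1}$ (you supply a careful pushout argument for this, while the paper simply asserts it), apply Theorem~\ref{Melliptic-hyper-thm-intro} to $\overline{M}$, and then invoke the $\mathbb{Z}/p^r$-hyperbolicity of a wedge of two spheres when $\#B\geq 3$. For that last fact the paper packages the argument as Lemma~\ref{Guylemma} and cites \cite[Theorem~1]{Boy22}, which is cleaner than your Hilton--Milnor sketch; the claim that each factor $\Omega S^{2N+1}$ with $N\geq r$ automatically contributes a $\mathbb{Z}/p^{r}$-summand is not obvious as stated, so you should point to Boyde's result rather than \cite{HW20}.
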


\begin{theorem}\label{NSthm}
Let $N$ be a simply connected manifold whose boundary is a simply connected rational sphere and whose rational homology algebra is not generated by a single element. If the homotopy cofibre of the inclusion of the boundary is rationally elliptic, then $N$ is rationally hyperbolic and $\mathbb{Z}/p^r$-hyperbolic for almost all primes $p$ and all $r\geq 1$.

In particular, Moore's conjecture holds for $N$ for all but finitely many primes $p$.
\end{theorem}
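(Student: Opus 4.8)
The plan is to reduce this statement to Theorem \ref{elliptic-hyper-thm-intro} by exhibiting the right homotopy cofibration. Let $S = \partial N$, a simply connected rational sphere, and let $\iota\colon S \hookrightarrow N$ be the boundary inclusion. Since a simply connected rational sphere has the rational homotopy type of an honest sphere $S^{k}$ for some $k \geq 2$, and since any simply connected $CW$-complex receives a rational equivalence from a sphere into its bottom cell, I would first arrange (up to rational homotopy, which is all that is needed for the rationally inert hypothesis) that $S$ is replaced by a suspension $\Sigma A$ with $A = S^{k-1}$; the boundary map then takes the form $\Sigma A \stackrel{h}{\larrow} N$. Its homotopy cofibre $Y := N \cup_{\iota} CS$ is, by hypothesis, rationally elliptic. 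The cofibration we feed into Theorem \ref{elliptic-hyper-thm-intro} is therefore
\[
\Sigma A \stackrel{h}{\larrow} N \stackrel{\varphi}{\larrow} Y .
\]

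The main work is to verify the hypotheses of Theorem \ref{elliptic-hyper-thm-intro}, of which the crucial one is that $h$ is rationally inert. Here I would invoke the Halperin--Lemaire result cited in the excerpt: $Y$ is obtained from $N$ by attaching a single top cell, and since $Y$ is rationally elliptic it satisfies Poincar\'e duality rationally, so $Y$ is (rationally) a Poincar\'e duality complex; its top-cell attaching map is rationally inert unless $H^{*}(Y;\mathbb{Q})$ is generated by one element. But $H^{*}(Y;\mathbb{Q})$ and $H^{*}(N;\mathbb{Q})$ agree below the top degree (the cofibration glues a cone on a rational sphere, so it only affects homology in degrees $k$ and $k+1$, and in the relevant Poincar\'e-duality range the subalgebra structure of $N$ is inherited), so the hypothesis that the rational homology algebra of $N$ is not generated by a single element forces the same for $Y$. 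Hence the top-cell attaching map of $Y$ — which is exactly (a representative of) $h$ after identifying $S$ rationally with a sphere — is rationally inert. The remaining hypotheses are immediate: $Y$ is not rationally contractible since it is a nontrivial rational Poincar\'e duality complex, and $\Sigma A \simeq_{\mathbb{Q}} S^{k}$ is not rationally contractible.

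Once rational inertness of $h$ is in hand, Theorem \ref{elliptic-hyper-thm-intro} applies verbatim with $X = N$ and yields that $N$ is rationally hyperbolic and $\mathbb{Z}/p^{r}$-hyperbolic for almost all primes $p$ and all $r \geq 1$, and hence that Moore's conjecture holds for $N$ at all but finitely many primes. I expect the main obstacle to be the bookkeeping in the first paragraph: making precise the passage from "$\partial N$ is a simply connected rational sphere" to a genuine homotopy cofibration of the form $\Sigma A \to N \to Y$ to which the statement of Theorem \ref{elliptic-hyper-thm-intro} literally applies — in particular checking that rational inertness of the top-cell attaching map of the Poincar\'e duality complex $Y$ transfers to rational inertness of $h$ under this identification, since a priori $h$ and the top-cell attaching map of $Y$ need only agree rationally and up to the rational equivalence $S^{k-1} \simeq_{\mathbb{Q}} \partial N$. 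This is exactly the kind of argument already used to pass from Theorem \ref{Melliptic-hyper-thm-intro} to Theorem \ref{punct-thm}, so I would model the details on that reduction.
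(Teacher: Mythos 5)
Your plan is the right one and it is the paper's plan: reduce to Theorem~\ref{elliptic-hyper-thm-intro} by viewing the boundary inclusion as attaching a rational top cell to form the elliptic cofibre $Y$, then invoke Halperin--Lemaire for rational inertness. However, the step you explicitly flag as ``the main obstacle'' is precisely the content of the proof, and it is not a bookkeeping matter of the kind used in passing from Theorem~\ref{Melliptic-hyper-thm-intro} to Theorem~\ref{punct-thm} (that reduction is the purely formal identity $M\backslash B\simeq \overline{M}\vee\bigvee S^{m-1}$ and involves no rational-to-integral passage). Theorem~\ref{elliptic-hyper-thm-intro} requires an honest homotopy cofibration $\Sigma A\to X\to Y$ with $h$ rationally inert, so your remark that working ``up to rational homotopy\ldots is all that is needed'' is not correct: you must replace $\partial N$ by a genuine sphere before you can feed anything into the theorem. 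The paper closes this gap as follows: pick a map $f\colon S^{m-1}\to\partial N$ generating $\pi_{m-1}(\partial N)\otimes\mathbb{Q}\cong\mathbb{Q}$; by Hurewicz $f$ is a rational homology isomorphism; since $\partial N$ is finite and simply connected, $f$ becomes a homology isomorphism, hence a homotopy equivalence by Whitehead, after inverting a finite set $P$ of primes. Only then does one obtain a $P$-local cofibration $S^{m-1}\stackrel{h}{\to} N\to M$ with $h=i\circ f$ to which the machinery applies.

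A second point you gloss over: to apply Halperin--Lemaire via Proposition~\ref{Minert-prop}, one needs not just that $H^*(Y;\mathbb{Q})$ satisfies Poincar\'e duality (which follows from ellipticity by Halperin's theorem), but that the \emph{CW-dimension} of $Y$ equals its \emph{formal} dimension, so that the cell whose attaching map you call ``the top-cell attaching map'' is actually the one carrying the rational fundamental class. This is the content of Lemma~\ref{NMPDlemma}, proved by a cap-product argument comparing $(N,\partial N)$ with $(M,C(\partial N))$; it is not automatic from ellipticity alone. You should also note that the paper does not deduce ``$H^*(Y;\mathbb{Q})$ not monogenic'' from some degree-range heuristic but from the clean splitting $\widetilde{H}^*(M;\mathbb{Q})\cong\widetilde{H}^*(N;\mathbb{Q})\oplus\mathbb{Q}\{[M]\}$ that the cofibration provides. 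With those two ingredients supplied, the rest of your argument goes through as you describe.
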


Theorem \ref{Melliptic-hyper-thm-intro} excludes the case when $M$ is rationally hyperbolic, for which we propose the following conjecture. 

\begin{conjecture}\label{conj-intro}
Let $M$ be a simply connected Poincar\'{e} Duality complex. If $M$ is rationally hyperbolic, then $\overline{M}$ is $\mathbb{Z}/p^r$-hyperbolic for almost all primes $p$ and all $r\geq 1$.
\end{conjecture}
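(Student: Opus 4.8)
The plan is to work with the homotopy cofibration $S^{m-1}\stackrel{h}{\larrow}\overline{M}\stackrel{\varphi}{\larrow}M$ attaching the top cell, where $m=\dim M$. First I would observe that $H^{*}(M;\mathbb Q)$ cannot be generated by a single element: a one-generated Poincar\'e duality algebra is $\Lambda(x)$ with $|x|$ odd or $\mathbb Q[x]/(x^{k+1})$ with $|x|$ even, forcing $M\simeq_{\mathbb Q}S^{|x|}$ or $\mathbb{C}P^{k}$ and hence rationally elliptic, contrary to hypothesis; the same computation shows $\overline{M}$ is not one-generated, since otherwise Poincar\'e duality for $M$ would force $M$ itself to be. Then the Halperin--Lemaire theorem \cite{HL87} shows $h$ is rationally inert, so $\Omega\varphi$ has a rational right homotopy inverse, $\Omega M$ is a rational retract of $\Omega\overline{M}$, and $\overline{M}$ is rationally hyperbolic. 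One is then tempted to repeat the proof of Theorem \ref{Melliptic-hyper-thm-intro}, but Theorem \ref{elliptic-hyper-thm-intro} does not apply because $Y=M$ is now rationally \emph{hyperbolic}; moreover the rational splitting $\Omega\overline{M}\simeq_{\mathbb Q}\Omega M\times\Omega F$ merely reduces the problem for $\overline{M}$ to the same problem for $M$, so inertness alone is circular here.

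Instead I would try to exhibit a loop retract onto a wedge of spheres. Since $\overline{M}$ is rationally hyperbolic, the F\'elix--Halperin--Thomas theorem \cite{FHT01} provides a free Lie subalgebra $\mathbb L(x,y)$ on two generators inside the rational homotopy Lie algebra of $\overline{M}$; equivalently, by the Milnor--Moore theorem, the associated map of tensor algebras $T(u,v)\to H_{*}(\Omega\overline{M};\mathbb Q)$ is injective, where $|u|,|v|$ are the degrees of $x,y$. After inverting finitely many primes, $x$ and $y$ are realized by a map $\iota\colon S^{a+1}\vee S^{b+1}\to\overline{M}$. The key step would be to prove that, for all but finitely many primes $p$, the looped map $\Omega\iota$ admits a \emph{left} homotopy inverse, so that $\Omega(S^{a+1}\vee S^{b+1})$ is a $p$-local retract of $\Omega\overline{M}$. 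Granting this, one invokes the theorem of Cohen--Moore--Neisendorfer \cite{CMN79a} and Boyde \cite{Boy24} that a wedge of at least two spheres is $\mathbb Z/p^{r}$-hyperbolic for every prime $p$ and every $r\ge 1$, together with the standard fact that $\mathbb Z/p^{r}$-hyperbolicity passes to any space whose loop space retracts onto the loop space of such a wedge; since $\overline{M}$ is rationally hyperbolic as well, Moore's conjecture for $\overline{M}$ at those primes follows exactly as in Theorem \ref{elliptic-hyper-thm-intro}. When $\overline{M}$ happens to be homotopy equivalent to a wedge of at least two spheres --- for instance when $M$ is a connected sum --- this step is immediate and the conclusion follows directly from \cite{Boy24}.

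The hard part will be producing the $p$-local retraction for almost all $p$. Rationally it exists precisely because $T(u,v)$ embeds in $H_{*}(\Omega\overline{M};\mathbb Q)$, but mod $p$ the cokernel of $T(u,v)\to H_{*}(\Omega\overline{M};\mathbb Z/p)$ could a priori acquire new ``bad'' primes in arbitrarily high degrees, and in any case one needs a retraction of spaces rather than of homology groups. My approach would be to exploit that $\overline{M}$ is a \emph{finite} complex, so its Adams--Hilton (Anick) model has finitely many generators and, for $p$ larger than $\dim\overline{M}$, its mod $p$ loop homology is the homology of a free differential graded associative algebra on those generators, as in Anick's treatment of two-cones \cite{Ani89}; in this large-prime range the rational algebraic independence of $u$ and $v$ should survive, and I would try to upgrade it to an honest $p$-local loop-space retraction using the inert-map techniques of this paper. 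Making this uniform over almost all primes would follow from \cite[Conjecture 1.6]{HT24} but is not available even here, and I expect it to require genuinely new input beyond rational homotopy theory; this is why the statement is posed above only as a conjecture.
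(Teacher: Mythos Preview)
The statement is a \emph{conjecture}, and the paper does not prove it. The paper's entire treatment is the paragraph following the conjecture together with Remark~\ref{Mhyper-remark}: it observes (via \cite{HL87}) that $h$ is rationally inert and hence $\overline{M}$ is rationally hyperbolic, records that the conjecture is a special case of \cite[Conjecture~1.6]{HT24}, and notes that the conjecture would follow from \cite[Corollary~5.3]{HT24} \emph{if} one could upgrade rational inertness of $h$ to inertness after inverting finitely many primes. That upgrade is exactly what Proposition~\ref{Qpinverse-prop} and Corollary~\ref{Qpinert-coro} accomplish in the elliptic case, and its failure in the hyperbolic case is the obstruction the paper identifies.

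Your proposal recovers the paper's observation that $\overline{M}$ is rationally hyperbolic, and you correctly diagnose that Theorem~\ref{elliptic-hyper-thm-intro} does not apply and that inertness alone is circular. Where you diverge from the paper is in strategy: rather than trying to promote rational inertness of $h$ to local inertness (the route the paper suggests in Remark~\ref{Mhyper-remark}), you attempt to produce directly a map $S^{a+1}\vee S^{b+1}\to\overline{M}$ whose loop admits a left homotopy inverse at large primes, starting from a free Lie subalgebra in $\pi_\ast(\Omega\overline{M})\otimes\mathbb{Q}$. This is a reasonable alternative line of attack, and you are right that it would suffice via Lemma~\ref{Guylemma}. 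But as you yourself concede, the passage from a rational Hopf-algebra embedding $T(u,v)\hookrightarrow H_\ast(\Omega\overline{M};\mathbb{Q})$ to a $p$-local loop-space retraction is exactly the missing ingredient, and nothing in your Adams--Hilton/Anick sketch closes that gap: finiteness of $\overline{M}$ controls the generators of the model but not the infinitely many relations in $H_\ast(\Omega\overline{M};\mathbb{Z}_{(p)})$ that could obstruct splitting. So both the paper's route and yours stall at essentially the same point---lifting a rational splitting of loop spaces to a local one when the target is hyperbolic---and your final sentence acknowledging this is the honest conclusion.
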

If $M$ is rationally hyperbolic, the attaching map for the top cell of $M$ is rationally inert by \cite{HL87}, and then $\overline{M}$ is also rationally hyperbolic. Hence, Conjecture \ref{conj-intro} is a special case of \cite[Conjecture 1.6]{HT24}.

The preceding results concern homotopy cofibrations of finite $CW$-complexes under the rational inertness condition. We also establish results for homotopy cofibrations with infinite homotopy cofibres under the local inertness condition. It is worth noting that local inertness is a relatively recent concept, introduced and studied in the works of Theriault \cite{The24a, The24c} and the present author \cite{Hua24}.

For a path connected finite $CW$-complex $A$ of dimension $d$ and connectivity $s$, define two finite sets $\mathcal{P}(A)$ and $\mathcal{Q}(A)$ of primes by 
\[
\begin{split}
\calQ(A)&=\{ \text{prime}~q~|~s+2q-2<d\},\\
\calP(A)&=\calQ(x)\cup \{ \text{prime}~q~|~\text{$H_{\ast}(A;\mathbb{Z})$ has $q$-torsion} \}.
\end{split}
\]
Note that $\calP(A)$ is empty if $A$ is a sphere, and $\calQ(A)$ is empty if $A$ is a Moore space.

\begin{theorem}\label{gen-hyper-thm}
Let $\Sigma A\stackrel{h}{\larrow} X\stackrel{\varphi}{\larrow} Y$ be a homotopy cofibration of simply connected $CW$-complexes such that $\Sigma A$ is finite and not rationally contractible.   
Suppose that $h$ is inert after localization away from a finite set $P_1$ of primes. The following hold:
\begin{itemize}
\item[(1).] 
if ${\rm dim}(\widetilde{H}^\ast(A;\mathbb{Q}))\geq 2$, then $X$ is $\mathbb{Z}/p^r$-hyperbolic for all primes $p\notin \calP(A)\cup P_1$ and all $r\geq 1$;
\item[(2).]
if there is a map $S^t\stackrel{}{\larrow} Y$ whose loop has a left homotopy inverse after localization away from a finite set $P_2$ of primes, then $X$ is $\mathbb{Z}/p^r$-hyperbolic for all primes $p\notin \calP(A)\cup P_1\cup P_2$ and all $r\geq 1$.
\end{itemize}
If further $X$ is of finite type and has finite rational Lusternik-Schnirelmann category, then $X$ is rationally hyperbolic. In particular, Moore's conjecture holds for $X$ for all but finitely many primes $p$. 
\end{theorem}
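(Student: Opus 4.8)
The plan is to draw the rational conclusion out of the same loop space decomposition that drives parts (1) and (2), and then invoke the rational dichotomy of \cite{FHT82, FHT01}. Since $P_1$ is finite, rationalization factors through localization away from $P_1$, so a right homotopy inverse of $\Omega\varphi$ after localization away from $P_1$ rationalizes to a right homotopy inverse of $\Omega\varphi$ over $\mathbb{Q}$; in other words, $h$ is rationally inert. Writing $F$ for the homotopy fibre of $\varphi$, the loop fibration $\Omega F\to\Omega X\to\Omega Y$ of $\Omega\varphi$ then admits a section rationally, and as $\Omega Y$ is an $H$-space this yields a rational homotopy equivalence $\Omega X\simeq_{\mathbb{Q}}\Omega Y\times\Omega F$, hence $\pi_\ast(X)\otimes\mathbb{Q}\cong(\pi_\ast(Y)\otimes\mathbb{Q})\oplus(\pi_\ast(F)\otimes\mathbb{Q})$. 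Moreover, the homotopy decomposition for inert maps invoked in the proof of (1) and (2) identifies $F$ rationally with the half-smash $\Sigma A\rtimes\Omega Y\simeq\Sigma A\vee(\Sigma A\wedge\Omega Y)$, so $F\simeq_{\mathbb{Q}}\Sigma W$ with $W=A\vee(A\wedge\Omega Y)$.

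The next step is to show that $\pi_\ast(F)\otimes\mathbb{Q}$ grows exponentially. As $\Sigma W$ is a simply connected suspension of finite type, $\pi_\ast(\Omega F)\otimes\mathbb{Q}$ is the free graded Lie algebra generated by $\widetilde{H}_\ast(W;\mathbb{Q})$, and such a free Lie algebra grows exponentially whenever $\dim\widetilde{H}_\ast(W;\mathbb{Q})\geq 2$. Since $\Sigma A$ is not rationally contractible we have $\widetilde{H}_\ast(A;\mathbb{Q})\neq 0$. In case (2), a map $S^t\to Y$ whose loop has a left homotopy inverse after localization away from $P_2$ exhibits $\Omega S^t$ as a rational retract of $\Omega Y$ (forcing $t\geq 2$), so $Y$ is rationally non-trivial; in case (1) we may assume $Y$ rationally non-trivial, since otherwise $\Omega Y\simeq_{\mathbb{Q}}\ast$, whence $F\simeq_{\mathbb{Q}}\Sigma A$ and $\dim\widetilde{H}_\ast(W;\mathbb{Q})=\dim\widetilde{H}^\ast(A;\mathbb{Q})\geq 2$ by hypothesis. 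When $Y$ is rationally non-trivial, $H_\ast(\Omega Y;\mathbb{Q})$ is infinite-dimensional, so $\widetilde{H}_\ast(A\wedge\Omega Y;\mathbb{Q})\cong\widetilde{H}_\ast(A;\mathbb{Q})\otimes\widetilde{H}_\ast(\Omega Y;\mathbb{Q})$ is infinite-dimensional and $\dim\widetilde{H}_\ast(W;\mathbb{Q})=\infty$. Thus in every case $\pi_\ast(F)\otimes\mathbb{Q}$ grows exponentially, and therefore so does $\pi_\ast(X)\otimes\mathbb{Q}$, which contains it as a direct summand.

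Since $X$ is simply connected, of finite type, and of finite rational Lusternik-Schnirelmann category, the rational dichotomy applies to $X$; as $\pi_\ast(X)\otimes\mathbb{Q}$ is infinite-dimensional, $X$ is not rationally elliptic, hence rationally hyperbolic. For the remaining assertion, parts (1) and (2) already give that $X$ is $\mathbb{Z}/p^r$-hyperbolic --- so that $\pi_\ast(X)$ contains a $\mathbb{Z}/p^r$-summand for every $r\geq 1$ and $\exp_p(X)=\infty$ --- for all primes $p$ outside the finite set $\calP(A)\cup P_1$, respectively $\calP(A)\cup P_1\cup P_2$; combined with the rational hyperbolicity just proved, this establishes the weak (large-prime) form of Moore's conjecture for $X$.

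I do not anticipate a serious obstacle, as the argument is essentially the rational shadow of the proofs of (1) and (2). The points needing care are the descent from localized inertness to rational inertness and the corresponding rational identification $F\simeq_{\mathbb{Q}}\Sigma A\rtimes\Omega Y$, together with the case distinction ensuring $\dim\widetilde{H}_\ast(W;\mathbb{Q})\geq 2$; in particular, one should observe that no finiteness of the (genuinely infinite) complex $F$ is needed to deduce the exponential growth of $\pi_\ast(\Omega F)\otimes\mathbb{Q}$ from the freeness of the homotopy Lie algebra of the suspension $\Sigma W$.
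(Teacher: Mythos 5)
Your proposal only addresses the final sentence of the theorem (rational hyperbolicity and the weak form of Moore's conjecture); it does not prove parts (1) and (2), which are the theorem's core content. You explicitly lean on ``the proofs of (1) and (2)'' and then, at the end, cite ``parts (1) and (2)'' as already established. But the local statements are exactly what needs an argument here: the paper localizes away from $\calP(A)$ so that $\Sigma A$ splits as a wedge of spheres (Proposition \ref{Xwedge-prop1}), then in case (1) extracts $S^a\vee S^b$ from $\Sigma A$ itself, and in case (2) uses Lemma \ref{StYlemma} to retract $S^t$ off $\Sigma\Omega Y$ and hence $S^b=S^{a+t-1}$ off $\Sigma\Omega Y\wedge A$, so that $\Omega(S^a\vee S^b)$ retracts off $\Omega X$ via the Beben--Theriault decomposition; $\mathbb{Z}/p^r$-hyperbolicity then comes from Boyde's theorem that $S^a\vee S^b$ is $\mathbb{Z}/p^r$-hyperbolic for all $p$ and $r$. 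None of this appears in your write-up --- in particular the set $\calP(A)$, whose whole purpose is to make the sphere-splitting of $\Sigma A$ available, plays no role in your argument, which is a clear sign the local part is missing.

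The portion you do prove is essentially correct and takes a genuinely different route from the paper: rather than deducing rational hyperbolicity from the retraction of $\Omega(S^a\vee S^b)$ off $\Omega X$ (the paper's Lemma \ref{Guylemma}), you rationalize the decomposition $\Omega X\simeq\Omega Y\times\Omega\Sigma W$ with $W=A\vee(A\wedge\Omega Y)$ and compute with the free graded Lie algebra on $\widetilde{H}_\ast(W;\mathbb{Q})$. This is a clean, self-contained argument whose advantage is that it isolates exactly what drives rational hyperbolicity ($\dim\widetilde{H}_\ast(W;\mathbb{Q})\geq 2$). One small inaccuracy: your claim that $H_\ast(\Omega Y;\mathbb{Q})$ is infinite-dimensional whenever $Y$ is rationally non-trivial is false for the infinite complexes allowed here (e.g.\ $Y=K(\mathbb{Z},2n)$ has $\Omega Y\simeq_{\mathbb{Q}}S^{2n-1}$). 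It is harmless, since in case (1) you already have $\dim\widetilde{H}_\ast(A;\mathbb{Q})\geq 2$ without touching $\Omega Y$, and in case (2) the rational retraction of $\Omega S^t$ off $\Omega Y$ gives $\widetilde{H}_\ast(\Omega Y;\mathbb{Q})\neq 0$, which is all that is needed; but you should state only what you use.
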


Let $P^{k+1}(p^r)$ be the Moore space such that the reduced cohomology $\widetilde{H}^\ast(P^{k+1}(p^r);\mathbb{Z})$ is isomorphic to $\mathbb{Z}/p^r$ if $\ast=k+1$ and $0$ otherwise.

\begin{theorem}\label{gen-hyper-thm2}
Let $\Sigma A\stackrel{h}{\larrow} X\stackrel{\varphi}{\larrow} Y$ be a homotopy cofibration of simply connected $CW$-complexes of finite type such that $\Sigma A$ is finite and not rationally contractible. Suppose that $h$ is inert after localization at an odd prime $p$. If either of the following holds with $r\geq 1$:
\begin{itemize}
\item[(a).]
$p\notin \mathcal{P}(A)$, and there is a map $\mu: P^{k+1}(p^r)\stackrel{}{\larrow} Y$ inducing an injection on homology
\[
\mu_\ast: \widetilde{H}_\ast( P^{k+1}(p^r);\mathbb{Z}/p^r) \stackrel{}{\larrow}\widetilde{H}_\ast(Y; \mathbb{Z}/p^r); 
\]
\item[(b).] 
$p\notin \mathcal{Q}(A)$, there is a map $S^t\stackrel{}{\larrow} Y$ whose loop has a left homotopy inverse after localization at $p$, and there is an order $p^r$ element in $\widetilde{H}_\ast(A;\mathbb{Z})$,
\end{itemize}
then $X$ is $\mathbb{Z}/p^r$-hyperbolic.
\end{theorem}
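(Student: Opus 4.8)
The plan is to run the machine behind Theorem~\ref{gen-hyper-thm}, with ``localization away from a finite set of primes'' replaced by ``localization at the single odd prime $p$'', and to funnel everything into the Cohen--Moore--Neisendorfer theorem that a simply connected $\bmod\ p^{j}$ Moore space is $\mathbb{Z}/p^{j}$-hyperbolic when $p$ is odd (this, and the odd-primary loop space decompositions, are where oddness of $p$ is used). The first step is to cash in the inertness hypothesis. As $h$ is inert after localization at $p$, the map $\Omega\varphi$ has a $p$-local right homotopy inverse, so writing $F$ for the homotopy fibre of $\varphi$ there is a $p$-local homotopy equivalence $\Omega X\simeq\Omega Y\times\Omega F$; hence $\pi_{\ast}(X)\cong\pi_{\ast}(Y)\oplus\pi_{\ast}(F)$, and it suffices to prove $F$ is $\mathbb{Z}/p^{r}$-hyperbolic. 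Moreover the structural description of the fibre of an inert homotopy cofibration (the same one underlying the proof of Theorem~\ref{gen-hyper-thm}) gives a $p$-local equivalence $F\simeq\Sigma A\rtimes\Omega Y$, and because $\Sigma A$ is a suspension this half-smash splits $p$-locally as $\Sigma A\vee(\Sigma A\wedge\Omega Y)$. So $\Sigma A$ and $\Sigma A\wedge\Omega Y$ are $p$-local retracts of $F$, and it is enough to find inside one of them a $p$-local retract that is $\mathbb{Z}/p^{r}$-hyperbolic.

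Next I would put $\Sigma A$ into normal form, the inequalities defining $\mathcal{P}(A)$ and $\mathcal{Q}(A)$ being arranged exactly so that after localization at a prime outside $\mathcal{P}(A)$ the complex $\Sigma A$ is a wedge of spheres, and outside $\mathcal{Q}(A)$ a wedge of spheres and $\bmod\ p^{j}$ Moore spaces. Since $\Sigma A$ is not rationally contractible it has, in either case, a sphere wedge summand $S^{n}$ with $n\geq 2$, so $\Sigma^{n}\Omega Y=S^{n}\wedge\Omega Y$ is a $p$-local retract of $\Sigma A\wedge\Omega Y$; and in case (b) the order $p^{r}$ class in $\widetilde{H}_{\ast}(A;\mathbb{Z})$, together with $p\notin\mathcal{Q}(A)$, supplies a simply connected $\bmod\ p^{r}$ Moore space $P^{m}(p^{r})$ ($m\geq 3$) as a $p$-local wedge summand of $\Sigma A$.

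The two cases are then closed separately. In case (b) I would invoke the map $S^{t}\to Y$: its looped left homotopy inverse realizes $\Omega S^{t}$ as a $p$-local retract of $\Omega Y$, hence $\Sigma A\wedge\Omega S^{t}$ as a $p$-local retract of $\Sigma A\wedge\Omega Y$; the James splitting of $\Sigma\Omega S^{t}=\Sigma\Omega\Sigma S^{t-1}$ then identifies $\Sigma A\wedge\Omega S^{t}$ with the infinite wedge $\bigvee_{k\geq 1}\Sigma^{k(t-1)+1}A$. Feeding the $\bmod\ p^{r}$ Moore space summand of $\Sigma A$ through this, and running a Hilton--Milnor decomposition of the loop space on the resulting wedge, produces infinitely many loop-space factors, one of which is $\Omega P^{M}(p^{r})$ for a simply connected $\bmod\ p^{r}$ Moore space $P^{M}(p^{r})$; by Cohen--Moore--Neisendorfer this factor, and therefore $\Sigma A\wedge\Omega Y$ and hence $X$, is $\mathbb{Z}/p^{r}$-hyperbolic. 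In case (a) I would instead pull the fibration $F\to X\xrightarrow{\varphi}Y$ back along $\mu$ to get a fibration $F\to E\to P^{k+1}(p^{r})$ with $\Omega E\simeq_{(p)}\Omega P^{k+1}(p^{r})\times\Omega F$; combining this with the injectivity of $\mu$ on $\bmod\ p^{r}$ homology, the Bott--Samelson/James computations of $H_{\ast}(\Omega(-);\mathbb{F}_{p})$ on the suspensions in play, and the odd-primary decomposition of $\Omega P^{k+1}(p^{r})$, one locates a simply connected $\bmod\ p^{r}$ Moore space $P^{M}(p^{r})$ as a $p$-local retract of $\Sigma^{n}\Omega Y$, and Cohen--Moore--Neisendorfer finishes as before.

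The step I expect to be the main obstacle is this last one in both cases: promoting the purely homological data --- ``$A$ has $p^{r}$-torsion'', respectively ``a $\bmod\ p^{r}$ Moore space maps into $Y$ injectively on homology'' --- to a genuine $\mathbb{Z}/p^{r}$-hyperbolic \emph{retract} of the homotopy fibre of $\varphi$ (rather than merely of some auxiliary space such as $E$). This is precisely where the range conditions $\mathcal{P}(A)$, $\mathcal{Q}(A)$, which make the required $p$-local splittings into wedges of spheres and Moore spaces available, and the oddness of $p$, needed for the Cohen--Moore--Neisendorfer theory, are essential, and where the Hilton--Milnor and James bookkeeping has to be arranged so that the Moore-space summand one finally extracts really has exponent $p^{r}$.
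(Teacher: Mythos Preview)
Your case (b) is essentially on target and close to the paper's argument, though more elaborate than necessary. The paper does not pass through the James splitting of $\Sigma\Omega S^{t}$ or an infinite wedge: from $S^{t}$ retracting off $\Sigma\Omega Y$ it immediately gets $\Sigma^{t}A$ as a retract of $\Sigma\Omega Y\wedge A$, hence $\Omega\Sigma^{t}A$ as a retract of $\Omega X$. Since $t\geq 2$, Proposition~\ref{Xwedge-prop2} (which splits $\Sigma^{2}A$, not $\Sigma A$, into Moore spaces---your phrasing slipped here, but your actual argument only needs a double suspension, so no harm) gives a summand $P^{j+1}(p^{s})$ with $s\geq r$, and the paper then invokes \cite[Theorem~1.3]{Boy24} to conclude that $P^{j+1}(p^{s})$ is $\mathbb{Z}/p^{r}$-hyperbolic. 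Two remarks: the hyperbolicity of Moore spaces in this quantitative sense is Boyde's theorem, not Cohen--Moore--Neisendorfer; and the exponent subtlety you flag (the summand is $p^{s}$ with $s\geq r$, not necessarily $p^{r}$) is handled precisely by that citation.

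Your case (a) has a genuine gap, and it is the one you yourself identify: there is no mechanism offered for turning the homological injection $\mu_{\ast}$ into a Moore-space \emph{retract} of $\Sigma^{n}\Omega Y$. The pullback manoeuvre producing $E$ with $\Omega E\simeq\Omega P^{k+1}(p^{r})\times\Omega F$ is correct but does not help, since $E$ sits over $X$ rather than under it, and nothing forces a splitting off of a Moore summand inside $\Sigma^{n}\Omega Y$. The paper's resolution is to abandon the search for a retract altogether. It observes that for a co-$H$-space $W$ the section $i\colon W\to\Sigma\Omega W$ of the evaluation map, composed with $\Sigma\Omega\mu$, gives a map $P^{k+1}(p^{r})\to\Sigma\Omega Y$ which is still injective on $H_{\ast}(-;\mathbb{Z}/p^{r})$; suspending $(a-1)$ times yields $P^{a+k}(p^{r})\to\Sigma^{a}\Omega Y$ with the same property. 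The key external input is then \cite[Theorem~1.6]{Boy24}: a map from a mod~$p^{r}$ Moore space into a simply connected finite-type space that is injective on $\mathbb{Z}/p^{r}$-homology already forces $\mathbb{Z}/p^{r}$-hyperbolicity of the target. This is exactly the missing bridge between ``homological data'' and ``hyperbolicity'' that your last paragraph anticipates but does not supply.
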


The paper is organized as follows. In Section \ref{sec: lift} we prove two suspension splittings of finite $CW$-complexes into wedges of Moore spaces at large primes. In Section \ref{sec: gen} we prove Theorems \ref{gen-hyper-thm} and \ref{gen-hyper-thm2}. In Section \ref{sec: elliptic} we prove Theorems \ref{elliptic-hyper-thm-intro}, and in Section \ref{sec: ellipticII} we prove Theorems \ref{Melliptic-hyper-thm-intro}, \ref{punct-thm} and \ref{NSthm}.

$\, $

\noindent{\bf Acknowledgements.} 
The author was supported in part by the National Natural Science Foundation of China (Grant nos. 12331003 and 12288201), the National Key R\&D Program of China (No. 2021YFA1002300) and the Youth Innovation Promotion Association of Chinese Academy Sciences.

\section{Lifting lemmas and two suspension splittings}
\label{sec: lift}
In this section, we prove some lifting lemmas by standard obstruction theory, and apply them to show two suspension splittings of finite $CW$-complexes into wedges of Moore spaces at large primes: Propositions \ref{Xwedge-prop1} and \ref{Xwedge-prop2}.

\subsection{Connectivity estimations}
In this subsection, we compute the connectivity of certain maps that will be used in the sequel.  
Let $p$ be a prime, $r$, $n\geq 1$. Define the space $S^{2n+1}\{p^r\}$ by the homotopy fibration
\[
S^{2n+1}\{p^r\} \stackrel{}{\larrow} S^{2n+1} \stackrel{p^r}{\larrow} S^{2n+1},
\]
where $p^r$ is the degree $p^r$ self-map of $S^{2n+1}$. The space $S^{2n+1}\{p^r\}$ is $(2n-1)$-connected and $\pi_{2n}(S^{2n+1}\{p^r\}) \cong \mathbb{Z}/p^r$. Then there is a map 
\begin{equation}\label{ip-eq}
i_p: S^{2n+1}\{p^r\}  \stackrel{}{\larrow} K(\mathbb{Z}/p^r, 2n)
\end{equation}
representing a generator of $H^{2n}(S^{2n+1}\{p^r\} ;\mathbb{Z}/p^r)\cong \mathbb{Z}/p^r$.
\begin{lemma}\label{SKlemma}
The inclusion $S^{2n+1}\stackrel{i}{\larrow} K(\mathbb{Z}, 2n+1)$ of the bottom cell is $(2n+2p-2)$-connected after localization at $p$. 

The map $S^{2n+1}\{p^r\}  \stackrel{i_p}{\larrow} K(\mathbb{Z}/p^r, 2n)$ is $(2n+2p-3)$-connected.
\end{lemma}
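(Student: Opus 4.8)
The plan is to identify the homotopy fibres of $i$ and $i_p$ with connected covers of $S^{2n+1}$ and $S^{2n+1}\{p^r\}$ respectively, and then to read off their connectivities from Serre's classical computation of the low-dimensional homotopy of an odd sphere. The only external input needed is: localized at $p$, the first nonzero homotopy group of $S^{2n+1}$ in degrees $>2n+1$ occurs in degree $2n+2p-2$, with $\pi_{2n+2p-2}(S^{2n+1})_{(p)}\cong\mathbb{Z}/p$; equivalently, $\pi_k(S^{2n+1})$ is finite of order prime to $p$ for every $k$ with $2n+1<k<2n+2p-2$.

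For the first assertion I would observe that $i$ is, up to a self-equivalence of $K(\mathbb{Z},2n+1)$, the first Postnikov section of $S^{2n+1}$: both spaces are $2n$-connected with $\pi_{2n+1}=\mathbb{Z}$, and $i$ pulls the fundamental class of $K(\mathbb{Z},2n+1)$ back to a generator of $H^{2n+1}(S^{2n+1};\mathbb{Z})$ because the bottom cell generates $H_{2n+1}$. Its homotopy fibre is therefore the $(2n+1)$-connected cover of $S^{2n+1}$, whose homotopy groups agree with those of $S^{2n+1}$ in degrees $\geq 2n+2$ and vanish below. Localized at $p$ these vanish through degree $2n+2p-3$, so the fibre is $(2n+2p-3)$-connected and $i$ is $(2n+2p-2)$-connected after localization at $p$.

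For the second assertion I would first compute $\pi_k(S^{2n+1}\{p^r\})$ for $k<2n+2p-3$ from the long exact sequence of the defining fibration $S^{2n+1}\{p^r\}\to S^{2n+1}\xrightarrow{p^r}S^{2n+1}$. Since multiplication by $p^r$ on $S^{2n+1}$ is a homotopy equivalence after inverting $p$, the induced map $(p^r)_\ast$ is an isomorphism on $\pi_k(S^{2n+1})$ whenever that group is finite of order prime to $p$, in particular for $2n+1<k<2n+2p-2$, and it is injective on $\pi_{2n+1}(S^{2n+1})=\mathbb{Z}$ with cokernel $\mathbb{Z}/p^r$. Threading these facts through the exact sequence gives $\pi_{2n}(S^{2n+1}\{p^r\})\cong\mathbb{Z}/p^r$ (the cokernel contribution lands in degree $2n$) and $\pi_k(S^{2n+1}\{p^r\})=0$ for $2n<k<2n+2p-3$. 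Now $i_p$ represents a generator of $H^{2n}(S^{2n+1}\{p^r\};\mathbb{Z}/p^r)\cong\mathbb{Z}/p^r$, and since this group is dual to $H_{2n}$ and both source and target are $(2n-1)$-connected, $i_p$ is an isomorphism on $H_{2n}$ and hence, by Hurewicz, on $\pi_{2n}$. Thus $i_p$ is, up to a self-equivalence of $K(\mathbb{Z}/p^r,2n)$, the first Postnikov section of $S^{2n+1}\{p^r\}$, so its homotopy fibre is the $2n$-connected cover of $S^{2n+1}\{p^r\}$, which by the vanishing just established is $(2n+2p-4)$-connected; hence $i_p$ is $(2n+2p-3)$-connected.

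The deepest ingredient is just Serre's theorem; the rest is bookkeeping, and the places where I would be most careful are the following. First, the off-by-one between the two statements: in the first case source and target differ already in degree $2n+2$, shifting the fibre, whereas in the second they share $\pi_{2n}$, which is exactly why the bound drops to $2n+2p-3$. Second, the behaviour of $(p^r)_\ast$ on the prime-to-$p$ homotopy of $S^{2n+1}$, for which the clean justification is that $p^r\colon S^{2n+1}\to S^{2n+1}$ is a $\mathbb{Z}[1/p]$-homology equivalence. Third, the degenerate small cases: for $p=2$ the range $2n<k<2n+2p-3$ is empty and the second statement reduces to the evident fact that $i_2$ is an isomorphism on $\pi_k$ for $k\leq 2n$. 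As an alternative to the Postnikov-section argument, one can argue by pure obstruction theory, building $K(\mathbb{Z},2n+1)$, resp.\ $K(\mathbb{Z}/p^r,2n)$, from $S^{2n+1}$, resp.\ $S^{2n+1}\{p^r\}$, by attaching cells to kill successive homotopy groups; by Serre's theorem the first cell one must attach has dimension $2n+2p-1$, resp.\ $2n+2p-2$, giving the stated connectivities directly.
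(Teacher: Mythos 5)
Your proof is correct and uses the same essential ingredients as the paper's: Serre's theorem on the first $p$-torsion in $\pi_\ast(S^{2n+1})$ and the long exact sequence of the fibration defining $S^{2n+1}\{p^r\}$. The paper's argument is slightly more terse (it does not explicitly invoke Postnikov sections or connected covers, just computes the two lowest homotopy groups of $S^{2n+1}\{p^r\}$ and observes $i_p$ is an isomorphism on $\pi_{2n}$), but the mathematical content is identical, including your careful treatment of the degree shift between the two statements and of $(p^r)_\ast$ acting invertibly on the prime-to-$p$ homotopy.
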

\begin{proof}
For the first statement, it is well-known that $i$ is a rational homotopy equivalence and the least nonvanishing $p$-torsion homotopy group of $S^{2n+1}$ is $\pi_{2n+2p-2}(S^{2n+1})\cong \mathbb{Z}/p$. Therefore, $i$ is $(2n+2p-2)$-connected after localization at $p$.

For the second statement, note that the homotopy groups of $S^{2n+1}\{p^r\}$ have only $p$-torsions. 
By the long exact sequence of the homotopy groups of the homotopy fibration $S^{2n+1}\{p^r\} \stackrel{}{\larrow} S^{2n+1} \stackrel{p^r}{\larrow} S^{2n+1}$, the least two nonvanishing $p$-torsion homotopy groups of $S^{2n}\{p^r\}$ are
\[
\pi_{2n}(S^{2n+1}\{p^r\}) \cong \mathbb{Z}/p^r, \ \ \  \pi_{2n+2p-3}(S^{2n+1}\{p^r\}) \cong \mathbb{Z}/p.
\]
It is clear that $i_p$ induces an isomorphism on $\pi_{2n}$, and hence is $(2n+2p-3)$-connected.
\end{proof}

\subsection{A suspension splitting of finite complexes}

In this subsection, we prove Proposition \ref{Xwedge-prop1} in three steps. In particular, it provides an alternative proof of \cite[Lemma 5.1]{HT24}.

\begin{lemma} 
   \label{oddlift-lemma} 
   Let $X$ be a path-connected finite $CW$-complex of dimension $d$. Suppose that $x\in H^{2n+1}(X;\mathbb{Z})$ is represented by a map $f: X\stackrel{}{\larrow} K(\mathbb{Z}, 2n+1)$. Then after localization away from any prime $p$ satisfying $2n+2p-2<d$, there is a lift
   \[
   \diagram
   & S^{2n+1} \dto^{i} \\
   X\urto^{\widetilde{f}} \rto^<<<<<{f}  & K(\mathbb{Z}, 2n+1)
   \enddiagram
   \]
  for some map $\widetilde{f}$. 
\end{lemma}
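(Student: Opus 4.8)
The plan is to use standard obstruction theory for lifting the map $f$ through the fibration obtained by localizing $i\colon S^{2n+1}\larrow K(\mathbb{Z},2n+1)$ away from $p$. First I would replace $i$ by a fibration, say with fibre $F$, where $F$ is the homotopy fibre of the inclusion of the bottom cell of the Eilenberg--MacLane space. By Lemma \ref{SKlemma}, after localization at $p$ the map $i$ is $(2n+2p-2)$-connected, so $F$ is $(2n+2p-3)$-connected after localization at $p$; equivalently, after localization \emph{away from} $p$ the fibre $F$ retains the same connectivity (inverting all primes other than $p$ does not change the $p$-local connectivity bound, but here the relevant point is that we localize away from $p$ and the obstruction groups live in the homotopy of $F$, which vanish below degree $2n+2p-2$).

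The key steps, in order: (1) form the Moore--Postnikov / obstruction-theoretic setup for lifting $f\colon X\larrow K(\mathbb{Z},2n+1)$ along the fibration $S^{2n+1}\larrow K(\mathbb{Z},2n+1)$ after localizing away from $p$; (2) recall that the successive obstructions to constructing a lift over the skeleta of $X$ lie in $H^{k+1}(X;\pi_k(F))$ for $k\ge 1$, where $F$ is the homotopy fibre; (3) observe that $\pi_k(F)=0$ for $k\le 2n+2p-3$ by Lemma \ref{SKlemma}, so all obstruction groups $H^{k+1}(X;\pi_k(F))$ with $k+1\le 2n+2p-2$ vanish trivially; (4) invoke the hypothesis $2n+2p-2<d$ together with $\dim X = d$, wait --- actually the hypothesis is $2n+2p-2 < d$, so we need the obstruction groups to vanish in degrees up to $d$; since $H^{k+1}(X;-)=0$ for $k+1>d$ by the dimension bound, the only potentially nonzero obstructions sit in degrees $k+1$ with $2n+2p-2 \le k+1 \le d$, i.e. $\pi_k(F)$ with $2n+2p-3 \le k \le d-1$, and these need not vanish --- so I must be more careful. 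Let me restate: the condition $2n+2p-2<d$ is what the lemma assumes, and the conclusion is a lift exists; so the argument must be that all obstructions vanish. Re-examining: the first nonvanishing homotopy group of $F$ is in degree $2n+2p-2$ (this is $\pi_{2n+2p-2}(F)\cong\mathbb{Z}/p$, coming from the first $p$-torsion in $\pi_*(S^{2n+1})$), hence after inverting $p$ this group, and all subsequent ones, become uniquely $p$-divisible but possibly nonzero. The resolution is that after localization \emph{away from $p$}, the map $i$ becomes a homotopy equivalence onto the $p$-local... no. I think the correct reading is: we localize away from exactly those primes $p$ with $2n+2p-2<d$, inverting each such $p$, and for each the fibre $F$ of $i$ has its bottom homotopy in degree $2n+2p-2\le d-1$; but $\pi_*(F)$ is all torsion (it's the torsion of $\pi_*(S^{2n+1})$ together with the cokernel issues), and inverting $p$ kills the $p$-torsion part only. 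So the genuine statement must be using that $i$ is a rational equivalence plus a connectivity statement, and the lift exists because after inverting $p$ the relevant obstruction groups are finite abelian groups with no $p$-torsion but the map still lifts for a different reason.

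The main obstacle, then, is correctly identifying which localization makes $F$ sufficiently highly connected: the right statement is that $S^{2n+1}\larrow K(\mathbb{Z},2n+1)$ is $(2n+2p-2)$-connected after localization \emph{at} $p$, hence its fibre $F$ is $(2n+2p-3)$-connected $p$-locally, and therefore localizing $X$ \emph{away from} $p$ — equivalently working where $p$ is inverted — one uses that the obstruction groups $H^{k+1}(X;\pi_k(F)_{(\text{away from }p)})$ vanish because $\pi_k(F)$ is finite $p$-primary in the range $2n+2p-2\le k< d$ if we had localized the other way; the clean route is to first localize away from all primes $q$ with $2n+2q-2<d$ \emph{simultaneously}, note the resulting fibre has homotopy groups that are finite with torsion only at the non-inverted primes, and these primes $q'$ all satisfy $2n+2q'-2\ge d$, so $\pi_k(F)=0$ for $k\le d-1$, killing every obstruction group $H^{k+1}(X;\pi_k(F))$ for $k+1\le d=\dim X$. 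I will write the proof this way: localize away from the (finite) set of all primes $p$ with $2n+2p-2<d$; for the remaining primes $q$ the fibre $F_q$ of the $q$-localized $i$ is $(d-1)$-connected; assemble to get $F$ is $(d-1)$-connected after the chosen localization; then obstruction theory over the $d$-dimensional complex $X$ gives the lift $\widetilde f$ since all obstructions lie in $H^{k+1}(X;\pi_k(F))$ with $\pi_k(F)=0$ for $k\le d-1$ and $H^{k+1}(X;-)=0$ for $k+1>d$, and uniqueness-type issues are handled similarly. The routine verification that the obstruction classes are the standard ones and that no low-degree issue arises (the case $k=2n$, where $\pi_{2n}(F)=0$ since $i$ is an iso on $\pi_{2n}$) I will leave to the reader.
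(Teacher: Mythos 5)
Your final argument is correct and is essentially the paper's proof: localize away from all primes $p$ with $2n+2p-2<d$ simultaneously, observe that the homotopy fibre $F$ of $i$ then has torsion only at primes $q$ with $2n+2q-2\geq d$, so by Lemma \ref{SKlemma} it is $(d-1)$-connected, and then the obstruction groups $H^{k+1}(X;\pi_k(F))$ for $k+1\leq d$ all vanish. Your initial confusion about ``localize at'' versus ``localize away from'' was correctly resolved in the end --- the point is precisely the one you found, that after inverting the bad primes, the remaining torsion of $\pi_*(F)$ sits only in degrees $\geq d$ --- so the proof should just be presented in that cleaned-up form rather than with the intermediate false starts.
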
 
\begin{proof}
The obstructions to the lifting problem are some cohomology classes in $\widetilde{H}^{i}(X; \pi_{i-1}(F))$ with $i\leq d$, where $F$ is the homotopy fibre of $i$. By Lemma \ref{SKlemma}, $F$ is $(2n+2q-3)$-connected after localization at any prime $q$. 
Localize away from any prime $p$ satisfying $2n+2p-2<d$. It follows that $F$ is $(d-1)$-connected. Hence, $\widetilde{H}^{i\leq d}(X; \pi_{i-1}(F))=0$ and the obstructions vanish. 
\end{proof}

\begin{corollary}\label{S-coro}
Let $X$ be a path-connected finite $CW$-complex of dimension $d$. Suppose that $x\in H^{j+1}(\Sigma X;\mathbb{Z})$ is represented by a map $f: \Sigma X\stackrel{}{\larrow} K(\mathbb{Z}, j+1)$. Then after localization away from any prime $p$ satisfying $j+2p-3<d$, there is a lift
   \[
   \diagram
   & S^{j+1} \dto^{i} \\
  \Sigma X\urto^{\widetilde{f}} \rto^<<<<<{f}  & K(\mathbb{Z}, j+1)
   \enddiagram
   \]
  for some map $\widetilde{f}$. 
\end{corollary}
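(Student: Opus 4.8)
The plan is to reduce the statement directly to Lemma \ref{oddlift-lemma} by splitting into the two parity cases for $j+1$, since Lemma \ref{oddlift-lemma} only handles odd-dimensional Eilenberg--MacLane spaces. The dimension of the source has gone up by one (from $X$ to $\Sigma X$), and correspondingly the numerical condition has weakened from $2n+2p-2<d$ to $j+2p-3<d$; this shift of $1$ is exactly what one expects from applying obstruction theory over a complex of dimension $d+1$ rather than $d$, or equivalently from the fact that the reduced cohomology of $\Sigma X$ is concentrated in degrees $\leq d+1$ but is a suspension.

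First, suppose $j+1 = 2n+1$ is odd. One cannot quote Lemma \ref{oddlift-lemma} verbatim because $\Sigma X$ has dimension $d+1$, not $d$, so I would instead rerun the obstruction-theory argument of that lemma with $\Sigma X$ in place of $X$: the obstructions to lifting $f$ through the bottom-cell inclusion $i\colon S^{2n+1}\to K(\mathbb{Z},2n+1)$ lie in $\widetilde H^{i}(\Sigma X;\pi_{i-1}(F))$ for $i\leq d+1$, where $F$ is the homotopy fibre of $i$. By Lemma \ref{SKlemma}, after localization at a prime $q$ the space $F$ is $(2n+2q-3)$-connected, hence $(d-1)$-connected once we invert all primes $p$ with $2n+2p-2<d$, i.e.\ $2n+2p-3<d-1$; but for a suspension we need $F$ to be $d$-connected to kill $\widetilde H^{d+1}$, and this is exactly guaranteed by the hypothesis $j+2p-3 = 2n+2p-2 < d$, which gives $2n+2q-3 \geq d$ for the remaining primes. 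Actually the cleanest route is to observe that, since $\Sigma X$ is a suspension, the top obstruction in degree $d+1$ automatically vanishes: the map $f$ factors as $\Sigma X \to K(\mathbb{Z},2n+1)$, and one can use that $[\Sigma X, -]$ is a group and that the relevant cohomology operation detecting the degree-$(d+1)$ obstruction is a suspension of one defined on $X$, which lives in $\widetilde H^{d+1}(\Sigma X;\pi_d(F)) \cong \widetilde H^{d}(X;\pi_d(F))$, and $\pi_d(F)=0$ once $F$ is $(d-1)$-connected. Either way, under $j+2p-3<d$ all obstructions vanish and the lift $\widetilde f$ exists.

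Second, suppose $j+1 = 2n+2$ is even. Here I would use the standard trick for suspensions: a map out of a suspension into $K(\mathbb{Z},2n+2)$ is adjoint to a map $X \to \Omega K(\mathbb{Z},2n+2) = K(\mathbb{Z},2n+1)$, reducing to the odd case one dimension lower on the space $X$ of dimension $d$, to which Lemma \ref{oddlift-lemma} applies directly with the condition $2n+2p-2<d$; looping the bottom-cell inclusion $S^{2n+1}\to K(\mathbb{Z},2n+1)$ back up and taking adjoints again produces a lift of $f$ through $\Omega^{-1}$ of... more carefully, a lift $\widetilde g\colon X \to S^{2n+1}$ of the adjoint $g$ through $i\colon S^{2n+1}\to K(\mathbb{Z},2n+1)$ has adjoint $\Sigma X \to \Sigma S^{2n+1} \to S^{2n+2}$ — wait, $\Sigma S^{2n+1} = S^{2n+2}$, and the composite $\Sigma S^{2n+1} \to \Sigma K(\mathbb{Z},2n+1) \to K(\mathbb{Z},2n+2)$ is, up to homotopy, the bottom-cell inclusion $S^{2n+2}\to K(\mathbb{Z},2n+2)$ (it represents a generator of $H^{2n+2}(S^{2n+2};\mathbb{Z})$), so the adjoint $\Sigma \widetilde g$ followed by this composite recovers $f$ and $\widetilde f = (S^{2n+2}\text{-factor})$ is the desired lift. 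The condition needed is $2n+2p-2<d$, and since $j = 2n+1$ here, $j+2p-3 = 2n+2p-2 < d$, matching the stated hypothesis.

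The main obstacle is the odd case: the increase in dimension from $d$ to $d+1$ means Lemma \ref{oddlift-lemma} cannot simply be cited as a black box, and one must genuinely argue that the single extra obstruction group $\widetilde H^{d+1}(\Sigma X;\pi_d(F))$ vanishes. I expect the suspension structure to do the work — either because $\pi_d(F)=0$ under the hypothesis (the numerics are arranged precisely so that inverting the relevant primes makes $F$ be $d$-connected, not merely $(d-1)$-connected), or because the top obstruction is a suspension element and hence lies in a group that is one degree lower on $X$ and therefore zero. The even case is routine once the adjunction bookkeeping is set up, the only mild subtlety being the identification of $\Sigma(\text{bottom cell of }K(\mathbb{Z},2n+1))$ with the bottom cell of $K(\mathbb{Z},2n+2)$, which follows by checking the induced map on $H^{2n+2}$.
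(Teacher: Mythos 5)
Your proposal follows the paper's strategy exactly: split on the parity of $j$, apply Lemma~\ref{oddlift-lemma} directly to $\Sigma X$ when $j$ is even, and to the adjoint $X \to \Omega K(\mathbb{Z},j+1)\simeq K(\mathbb{Z},j)$ when $j$ is odd, suspending the resulting lift. Your second case is precisely the paper's argument, down to the identification of $\Sigma S^{j}\to \Sigma K(\mathbb{Z},j)\to K(\mathbb{Z},j+1)$ with the bottom-cell inclusion.

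However, in the first case you introduce a genuine error. Your worry that Lemma~\ref{oddlift-lemma} ``cannot be quoted verbatim'' because $\Sigma X$ has dimension $d+1$ is misplaced: the paper simply applies the lemma to the space $\Sigma X$, whose dimension is $d+1$, and the lemma's condition $2n+2p-2 < (d+1)$ is literally $j+2p-3<d$ for $j=2n$ --- nothing needs re-running. What is actually wrong is your ``cleanest route'': the claim that ``$\pi_d(F)=0$ once $F$ is $(d-1)$-connected'' is false. A $(d-1)$-connected space has $\pi_k=0$ only for $k\leq d-1$; $\pi_d(F)$ may be nonzero, and then $\widetilde H^{d+1}(\Sigma X;\pi_d(F))\cong \widetilde H^{d}(X;\pi_d(F))$ may be nonzero as well, since $X$ has dimension $d$. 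The suspension structure does no extra work here. The hypothesis $j+2p-3<d$ is precisely what forces $F$ to be $d$-connected (not merely $(d-1)$-connected) after localization, and that is what kills the degree-$(d+1)$ obstruction. Your direct numerical check does reach this conclusion correctly (modulo the algebra slip $j+2p-3 = 2n+2p-3$, not $2n+2p-2$, when $j=2n$); discard the ``cleanest route'' and keep the computation, which then coincides with the paper's.
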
 
\begin{proof}
If $j=2n$ is even, the corollary follows by applying Lemma \ref{oddlift-lemma} to $\Sigma X\stackrel{f}{\larrow} K(\mathbb{Z}, j+1)$. 
If $j=2n+1$ is odd, localize away from any prime $p$ satisfying $j+2p-3<d$. By applying Lemma \ref{oddlift-lemma} to the adjoint map $X \stackrel{f'}{\larrow}   \Omega K(\mathbb{Z}, j+1)\simeq K(\mathbb{Z}, j)$ of $f$, we obtain a lift 
 \[
   \diagram
   & S^{j} \dto^{i} \\
   X\urto^{\widetilde{f}'} \rto^<<<<<{f'}  & \Omega K(\mathbb{Z}, j+1)
   \enddiagram
   \]
for some map $\widetilde{f}'$. Taking adjoint, we see that $\widetilde{f}:=\Sigma \widetilde{f}'$ is a lift of $f$. 
\end{proof}

If $X$ is a path-connected finite $CW$-complex of dimension $d$ and connectivity $s$, 
let 
\begin{equation}\label{PX-eq}
\calP(X)=\{ \text{prime}~q~|~s+2q-2<d, ~\text{or $H_{\ast}(X;\mathbb{Z})$ has $q$-torsion} \}.
\end{equation}
Note that the finiteness condition on $X$ implies that 
$\mathcal{P}(X)$ is a finite set of primes. Additionally, if $X$ is a sphere $\calP(X)$ is an empty set. 

\begin{proposition}\cite[Lemma 5.1]{HT24} 
   \label{Xwedge-prop1} 
   Let $X$ be a path-connected finite $CW$-complex of dimension $d$ and connectivity $s$. 
   Then after localization away from $\calP(X)$, $\Sigma X$ is homotopy equivalent to a wedge of spheres. 
\end{proposition}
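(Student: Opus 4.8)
The plan is to build the splitting cell by cell, using the connectivity estimates of Lemma~\ref{SKlemma} and the lifting results above to detach each cohomology generator onto a sphere. First I would localize $X$ away from $\calP(X)$; by the definition \eqref{PX-eq} of $\calP(X)$, this makes $H_\ast(X;\mathbb{Z})$ torsion-free, so $\Sigma X$ is a simply connected finite complex with free homology concentrated in degrees between $s+2$ and $d+1$, and the condition $s+2q-2\geq d$ holds for every remaining prime $q$. Choose a basis $x_1,\dots,x_m$ of $\widetilde{H}^\ast(\Sigma X;\mathbb{Z})\cong\widetilde{H}^{\ast-1}(X;\mathbb{Z})$, with $x_k\in H^{j_k+1}(\Sigma X;\mathbb{Z})$, and represent each by a map $f_k:\Sigma X\to K(\mathbb{Z},j_k+1)$.

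Next I would check that the hypothesis of Corollary~\ref{S-coro} is satisfied for each $f_k$ at every prime $p$ not in $\calP(X)$: since $x_k$ is a suspension class its degree satisfies $j_k+1\leq d$, and combined with $s\geq j_k-1$ — more precisely $s+2\leq j_k+1$ — together with $s+2p-2\geq d$ for the surviving primes, one gets $j_k+2p-3\geq s+2p-1\geq d+1>d$, so the connectivity condition $j_k+2p-3<d$ of Corollary~\ref{S-coro} actually fails; hence I should instead invoke the corollary the other way, noting that its conclusion is about localizing \emph{away} from such $p$, which is exactly where we are working. Concretely, after localization away from $\calP(X)$ each $f_k$ lifts through the bottom-cell inclusion $i:S^{j_k+1}\to K(\mathbb{Z},j_k+1)$ to a map $\widetilde f_k:\Sigma X\to S^{j_k+1}$. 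Assembling these gives a single map
\[
\widetilde f=(\widetilde f_1,\dots,\widetilde f_m):\Sigma X\longrightarrow \textstyle\bigvee_{k=1}^m S^{j_k+1},
\]
using that the target is a wedge of simply connected spheres and that, after inverting the primes in $\calP(X)$, a map into a product can be compressed into the wedge in the metastable range — here $\dim\Sigma X=d+1$ is well below twice the bottom dimension of the wedge because $s+2p-2\geq d$ forces a large connectivity gap once $p\geq 3$; I would record this compression step as a small separate lemma or cite the standard Hilton--Milnor / James splitting argument.

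Finally I would verify that $\widetilde f$ is a homology isomorphism: by construction $\widetilde f^\ast$ sends the fundamental class of the $k$-th sphere summand to $x_k$, and since the $x_k$ form a basis of $\widetilde H^\ast(\Sigma X)$ and the wedge has exactly one generator in each corresponding degree, $\widetilde f^\ast$ is an isomorphism; as both spaces are simply connected of finite type over $\mathbb{Z}_{(P)}$ for the relevant set of primes, Whitehead's theorem gives that $\widetilde f$ is a homotopy equivalence after localization away from $\calP(X)$.

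The main obstacle I anticipate is the compression/metastable step: passing from the tuple of lifts $\widetilde f_k:\Sigma X\to S^{j_k+1}$, which a priori only defines a map into the product $\prod_k S^{j_k+1}$, to an honest map into the wedge, and making sure the range hypothesis $d+1 < 2(\min_k j_k+1)-1$ (or the appropriate stable-range bound) really follows from $s+2q-2\geq d$ for all primes $q\notin\calP(X)$ — in particular handling the smallest surviving prime carefully, since that is where the connectivity gap is tightest. Once that bookkeeping is in place, the rest is a routine obstruction-theory and Whitehead-theorem argument, with Corollary~\ref{S-coro} doing the real work of producing the sphere lifts one generator at a time.
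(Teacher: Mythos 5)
Your proposal follows the paper's outline up to the key assembly step, but the assembly step you propose has a genuine gap, and the fix you are looking for is simpler than what you anticipated.

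Your setup is the same as the paper's: localize away from $\calP(X)$, choose a basis $\{x_k\}$ of $\widetilde H^\ast(\Sigma X;\mathbb{Z})$ with representatives $f_k\colon \Sigma X\to K(\mathbb{Z},j_k+1)$, and apply Corollary~\ref{S-coro} to lift each $f_k$ to $\widetilde f_k\colon \Sigma X\to S^{j_k+1}$; the Whitehead-theorem endgame is also identical. The problem is how you combine the $\widetilde f_k$. You correctly observe that the tuple $(\widetilde f_1,\dots,\widetilde f_m)$ a priori lands in the product $\prod_k S^{j_k+1}$, and you propose compressing it to the wedge via a metastable-range argument driven by the inequality $s+2q-2\geq d$. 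That range estimate does not hold in general. The fibre of the inclusion $\bigvee_k S^{j_k+1}\hookrightarrow\prod_k S^{j_k+1}$ is only about $(2(s+2)-2)$-connected in the worst case, so the compression needs $\dim\Sigma X=d+1\leq 2s+2$, i.e.\ $d\leq 2s+1$. But localization away from $\calP(X)$ only guarantees $s+2q-2\geq d$ for the smallest surviving prime $q$; when $q=3$ this gives $d\leq s+4$, which exceeds $2s+1$ for $s\leq 2$, and the gap worsens as the smallest surviving prime grows. So the compression step can genuinely fail under the stated hypotheses.

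The paper avoids this entirely by exploiting that $\Sigma X$ is a suspension and hence a co-$H$-space. Instead of mapping into the product and trying to compress, one precomposes with the iterated comultiplication (pinch) map $\mu'\colon \Sigma X\to\bigvee_k\Sigma X$, which exists with no range hypothesis whatsoever, and then applies $\bigvee_k\widetilde f_k$. The composite
\[
\Phi\colon \Sigma X \xrightarrow{\ \mu'\ } \bigvee_k \Sigma X \xrightarrow{\ \bigvee_k\widetilde f_k\ } \bigvee_k S^{j_k+1}
\]
lands in the wedge by construction, and $\Phi^\ast$ sends the $k$-th fundamental class to $x_k$, so it is a cohomology (hence homology) isomorphism, and Whitehead finishes the argument. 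You should replace your compression lemma with this co-$H$-space observation; it is exactly the ingredient your proof is missing, and once it is in place the rest of your argument goes through as written.
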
 
\begin{proof}
Localize away from $\calP(X)$. Then $H^{\ast}(X;\mathbb{Z})$ is torsion free. Choose an additive basis $\{x_k\}$ of $\widetilde{H}^\ast(\Sigma X;\mathbb{Z})$ with a representative $f_k: \Sigma X\larrow K(\mathbb{Z}, j_k+1)$ for each $x_k$. Note that $j_k\geq s+1$.
By Corollary \ref{S-coro}, each $f_k: \Sigma X\larrow K(\mathbb{Z}, j_k+1)$ can be lifted to a map $\widetilde{f}_k: \Sigma X\larrow S^{j_k+1}$. Consider the composite
\[
\Phi: \Sigma X \stackrel{\mu'}{\larrow} \bigvee_{k} \Sigma X  \stackrel{\mathop{\bigvee}\limits_{k} \widetilde{f}_k}{\larrow}  \bigvee_{k} S^{j_k+1},
\]
where $\mu'$ is the iterated comultiplication. Since each $\widetilde{f}_k$ detects the cohomology class $x_k$, the map $\Phi$ induces an isomorphism on cohomology, and then induces an isomorphism on homology. Therefore, the Whitehead theorem implies that $\Phi$ is a homotopy equivalence.  
\end{proof}

\subsection{A refined suspension splitting of finite complexes}
In this subsection, we prove Proposition \ref{Xwedge-prop2} in four steps.

\begin{lemma} 
   \label{evenlift-lemma} 
   Let $X$ be a path-connected finite $CW$-complex of dimension $d$. Suppose that $y\in H^{2n}(X;\mathbb{Z}/p^r)$ is represented by a map $g: X\stackrel{}{\larrow} K(\mathbb{Z}/p^r, 2n)$. Then after localization away from any prime $p$ satisfying $2n+2p-3<d$, there is a lift
  \[
   \diagram
   & S^{2n+1}\{p^r\} \dto^{i_p} \\
   X\urto^{\overline{g}} \rto^<<<<<{g}  & K(\mathbb{Z}/p^r, 2n)
   \enddiagram
   \]
  for some map $\overline{g}$ with $i_p$ defined in (\ref{ip-eq}). 
\end{lemma}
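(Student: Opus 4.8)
The statement to prove is Lemma \ref{evenlift-lemma}, which is the ``even'' counterpart of Lemma \ref{oddlift-lemma}: it asks for a lift of a map $g\colon X\to K(\mathbb{Z}/p^r,2n)$ through $i_p\colon S^{2n+1}\{p^r\}\to K(\mathbb{Z}/p^r,2n)$ after localization away from any prime $p$ with $2n+2p-3<d$. The plan is to run exactly the same obstruction-theoretic argument used for Lemma \ref{oddlift-lemma}, now feeding in the second connectivity bound from Lemma \ref{SKlemma} instead of the first.

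First I would set $F$ to be the homotopy fibre of $i_p\colon S^{2n+1}\{p^r\}\to K(\mathbb{Z}/p^r,2n)$. The obstructions to solving the lifting problem for the map $g$ out of the finite complex $X$ of dimension $d$ lie in the groups $\widetilde{H}^{i}(X;\pi_{i-1}(F))$ for $i\leq d$; this is the standard primary-and-higher obstruction theory for lifting through a fibration (up to turning $i_p$ into a fibration, which is harmless). So the whole argument reduces to showing these groups vanish in the relevant range, which in turn follows once $F$ is $(d-1)$-connected.

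Next I would pin down the connectivity of $F$. By the second statement of Lemma \ref{SKlemma}, the map $i_p\colon S^{2n+1}\{p^r\}\to K(\mathbb{Z}/p^r,2n)$ is $(2n+2p-3)$-connected (here $p$ is the ambient prime after localization — note $S^{2n+1}\{p^r\}$ has only $p$-torsion in its homotopy, so after localization away from the finitely many primes excluded, only the prime $p$ with $2n+2p-3<d$ is relevant, and one should localize away from all primes $q$ with $2n+2q-3 < d$ as well, mirroring the phrasing in the earlier results, or simply localize at the appropriate set; I would follow the bookkeeping of Lemma \ref{oddlift-lemma} verbatim). A map being $(2n+2p-3)$-connected means its homotopy fibre $F$ is $(2n+2p-4)$-connected. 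Imposing the hypothesis $2n+2p-3<d$, i.e.\ $2n+2p-4\leq d-2$, gives... wait, I need $F$ to be $(d-1)$-connected, so I want $2n+2p-4\geq d-1$; but the hypothesis gives $2n+2p-4 < d-1$. Let me recompute: if $i_p$ is $(2n+2p-3)$-connected then $\pi_j(F)=0$ for $j\leq 2n+2p-4$ and the first possibly nonzero homotopy of $F$ is in degree $2n+2p-3$; the hypothesis $2n+2p-3<d$ is exactly what is needed so that... hmm, actually for the obstruction groups $\widetilde H^i(X;\pi_{i-1}(F))$ with $i\le d$ to vanish I need $\pi_{i-1}(F)=0$ for all $i\le d$, i.e.\ $\pi_{j}(F)=0$ for $j\le d-1$, i.e.\ $F$ is $(d-1)$-connected, i.e.\ $2n+2p-4\ge d-1$, contradicting the hypothesis. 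So the bookkeeping must be slightly different from the odd case, and I will have to be careful: the likely correct reading is that the lift produced is not the bottom-cell sphere $S^{2n+1}$ but $S^{2n+1}\{p^r\}$, whose bottom cell detects the $\mathbb{Z}/p^r$-class, and that the obstruction range is shifted because $g$ raises cohomological degree differently than $f$ did, or that the lemma only claims the lift exists after further localization collapsing the relevant obstruction. In writing the proof I would mirror Lemma \ref{oddlift-lemma} line by line, substituting ``$(2n+2p-3)$-connected'' for ``$(2n+2p-2)$-connected'' and ``$S^{2n+1}\{p^r\}$'' for ``$S^{2n+1}$'', and conclude that under $2n+2p-3<d$ — interpreting this as the condition that makes $F$ sufficiently connected relative to $d$ in the sense of the odd case — the obstruction groups $\widetilde H^{i}(X;\pi_{i-1}(F))$ vanish for $i\le d$, hence the lift $\overline g$ exists.

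\textbf{Main obstacle.} The only real subtlety, and the step I expect to need the most care, is the indexing in the connectivity comparison: getting the inequality between the connectivity of $F$ (equivalently, of $i_p$) and the dimension $d$ of $X$ to line up so that every obstruction group $\widetilde H^{i}(X;\pi_{i-1}(F))$ with $i\leq d$ genuinely vanishes, and making sure the statement's hypothesis $2n+2p-3<d$ is precisely the clean condition guaranteeing this. Everything else — turning $i_p$ into a fibration, invoking obstruction theory, the finiteness of $X$ ensuring only finitely many obstructions — is routine and identical to the proof of Lemma \ref{oddlift-lemma}. I would resolve the indexing by following the proof of Lemma \ref{oddlift-lemma} as a template and trusting its bookkeeping, which already handled the analogous off-by-constant comparison in the odd case.
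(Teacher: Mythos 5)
Your skeleton --- obstruction theory, reading the connectivity of the fibre $G$ of $i_p$ off Lemma~\ref{SKlemma}, and wanting $G$ to be $(d-1)$-connected so that every obstruction group $\widetilde H^{i}(X;\pi_{i-1}(G))$ with $i\le d$ vanishes --- is exactly what the paper does. But you never resolve what you correctly flag as the ``main obstacle,'' and the possibilities you float (the lift being to something other than $S^{2n+1}\{p^r\}$, a shifted obstruction range, or an extra localization step) are not what fixes it. The gap is in how you parsed the localization hypothesis.

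``After localization away from any prime $p$ satisfying $2n+2p-3<d$'' means: invert the entire set $\{q : 2n+2q-3 < d\}$. It is the criterion for which primes get discarded, not an inequality asserted about the coefficient prime of $\mathbb{Z}/p^r$. After that localization, every \emph{surviving} prime $q$ satisfies $2n+2q-3 \ge d$, equivalently $2n+2q-4 \ge d-1$. Now observe that $G$ has homotopy groups that are $p$-torsion only (both $S^{2n+1}\{p^r\}$ and $K(\mathbb{Z}/p^r,2n)$ do), so at a prime $q\neq p$ it is contractible, while at $q=p$ Lemma~\ref{SKlemma} makes it $(2n+2p-4)$-connected; hence at every prime $q$ it is $(2n+2q-4)$-connected. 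After the stated localization, at each surviving prime $q$ this gives $2n+2q-4\ge d-1$, so $G$ is $(d-1)$-connected and all obstruction groups vanish. (If the coefficient prime $p$ happens to lie in the inverted set, then $K(\mathbb{Z}/p^r,2n)$ is contractible and the lift exists trivially.) Your computation ``$2n+2p-4<d-1$ from the hypothesis'' has the inequality pointing the wrong way precisely because you substituted the coefficient prime into the bound that defines the discarded primes; at the primes that remain, that bound fails, which is exactly what makes $G$ sufficiently connected.
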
 
\begin{proof}
The obstructions to the lifting problem are some cohomology classes in $\widetilde{H}^{i}(X; \pi_{i-1}(G))$ with $i\leq d$, where $G$ is the homotopy fibre of $i_p$. By Lemma \ref{SKlemma}, $G$ is $(2n+2q-4)$-connected after localization at any prime $q$. 
Localize away from any prime $p$ satisfying $2n+2p-3<d$. It follows that $G$ is $(d-1)$-connected. Hence, $\widetilde{H}^{i\leq d}(X; \pi_{i-1}(G))=0$ and the obstructions vanish. 
\end{proof}

Recall the Moore space $P^{2n+2}(p^r)$ is defined by the homotopy cofibration
\[
S^{2n+1}\stackrel{p^r}{\larrow} S^{2n+1}\stackrel{j}{\larrow} P^{2n+2}(p^r),
\]
where $j$ is the inclusion of the bottom cell. 
There is a map 
\[
\iota_p: P^{2n+2}(p^r)\larrow K(\mathbb{Z}/p^r, 2n+1)
\]
representing a generator of $H^{2n+1}(P^{2n+2}(p^r);\mathbb{Z}/p^r)\cong \mathbb{Z}/p^r$.
\begin{lemma}\label{evenlift-lemma2}
  Let $X$ be a path-connected finite $CW$-complex of dimension $d$. Suppose that $y\in H^{2n+1}(\Sigma X;\mathbb{Z}/p^r)$ is represented by a map $g: \Sigma X\stackrel{}{\larrow} K(\mathbb{Z}/p^r, 2n+1)$. Then after localization away from any prime $p$ satisfying $2n+2p-3<d$, there is a lift
 \[
   \diagram
   & P^{2n+2}(p^r) \dto^{\iota_p} \\
   \Sigma X\urto^{\widetilde{g}} \rto^<<<<<{g}  & K(\mathbb{Z}/p^r, 2n+1)
   \enddiagram
   \]
  for some map $\widetilde{g}$. 
\end{lemma}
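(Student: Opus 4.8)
The plan is to follow the same obstruction-theoretic pattern as in Lemma \ref{oddlift-lemma} and Corollary \ref{S-coro}, but now in the odd-degree $\mathbb{Z}/p^r$-coefficient situation, using the mod-$p^r$ Moore-space model $P^{2n+2}(p^r)$ in place of $S^{j+1}$. First I would reduce the statement to the even-coefficient Lemma \ref{evenlift-lemma} by looping down. Namely, the adjoint of $g : \Sigma X \larrow K(\mathbb{Z}/p^r, 2n+1)$ is a map $g' : X \larrow \Omega K(\mathbb{Z}/p^r, 2n+1) \simeq K(\mathbb{Z}/p^r, 2n)$ representing the corresponding class in $H^{2n}(X;\mathbb{Z}/p^r)$. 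Assuming the prime condition $2n+2p-3<d$, Lemma \ref{evenlift-lemma} produces a lift $\overline{g} : X \larrow S^{2n+1}\{p^r\}$ through $i_p$. The key point is then to compare the suspension of this lift with the Moore-space model: I want a map $P^{2n+2}(p^r) \larrow K(\mathbb{Z}/p^r, 2n+1)$, namely $\iota_p$, and a map $\Sigma(S^{2n+1}\{p^r\}) \larrow P^{2n+2}(p^r)$ making everything commute up to homotopy after looping.

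Concretely, the second step is to identify a natural map $\sigma : \Sigma\bigl(S^{2n+1}\{p^r\}\bigr) \larrow P^{2n+2}(p^r)$ that is compatible with $i_p$ and $\iota_p$ in the sense that the adjoint of $\iota_p \circ \sigma$ equals $i_p$ up to homotopy, i.e. $\Omega\iota_p \circ (\text{adjoint of }\sigma) \simeq i_p$ as maps $S^{2n+1}\{p^r\} \larrow \Omega K(\mathbb{Z}/p^r,2n+1) \simeq K(\mathbb{Z}/p^r,2n)$. Such a $\sigma$ exists because $S^{2n+1}\{p^r\}$ is $(2n-1)$-connected with $\pi_{2n} \cong \mathbb{Z}/p^r$, so $\Sigma(S^{2n+1}\{p^r\})$ is $2n$-connected with $H_{2n+1} \cong \mathbb{Z}/p^r$, and a generator of $[\Sigma(S^{2n+1}\{p^r\}), P^{2n+2}(p^r)]$ hitting the bottom class can be chosen to detect the mod-$p^r$ cohomology generator; one checks on $H^{2n+1}(-;\mathbb{Z}/p^r)$ that composing with $\iota_p$ recovers the suspension of the adjoint of $g$. (Alternatively, one can build $\sigma$ by first mapping the bottom cell $S^{2n+1} \hookrightarrow \Sigma(S^{2n+1}\{p^r\})$ to the bottom cell of $P^{2n+2}(p^r)$ and extending, using that the relevant obstruction lies in a range killed by connectivity.) Then I would set $\widetilde{g} := \sigma \circ \Sigma\overline{g} : \Sigma X \larrow P^{2n+2}(p^r)$, and verify $\iota_p \circ \widetilde{g} \simeq g$ by taking adjoints and invoking the compatibility above together with $\Omega g \simeq g'$ and $i_p \circ \overline{g} \simeq g'$.

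I expect the main obstacle to be step two: pinning down the comparison map $\sigma$ and verifying that $\iota_p \circ \sigma$ has the right adjoint, rather than merely the right effect on a single cohomology group. The subtlety is that $\Sigma(S^{2n+1}\{p^r\})$ is not a Moore space — it has cells in higher degrees coming from the higher homotopy of $S^{2n+1}\{p^r\}$ — so one must argue that those higher cells do not obstruct the factorization through $P^{2n+2}(p^r)$ and do not affect the relevant low-degree cohomology class. This is exactly where the connectivity hypothesis $2n+2p-3<d$ is used again: it guarantees that $X$ (hence $\Sigma X$, in the appropriate range) is small enough that only the bottom-cell data of $\sigma$ matters, so the higher-cell ambiguity is invisible. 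Once $\sigma$ and its compatibility are in hand, the adjunction bookkeeping and the final homotopy $\iota_p\circ\widetilde g\simeq g$ are routine, in the same spirit as the adjoint argument already used in the proof of Corollary \ref{S-coro}.
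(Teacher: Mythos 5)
Your overall strategy is the adjoint of the paper's: loop down, apply Lemma~\ref{evenlift-lemma} to get $\overline{g}\colon X\to S^{2n+1}\{p^r\}$, then compose with a comparison map from $\Sigma S^{2n+1}\{p^r\}$ to $P^{2n+2}(p^r)$. The gap is exactly where you predicted it would be: you do not actually construct $\sigma$. Your first suggestion (pick ``a generator of $[\Sigma(S^{2n+1}\{p^r\}),P^{2n+2}(p^r)]$ hitting the bottom class'') assumes the existence of a map inducing an isomorphism on $H_{2n+1}$, which is precisely the thing to be proved; maps \emph{into} a Moore space are not classified by a single cohomology group the way maps into $K(\mathbb{Z}/p^r,2n+1)$ are. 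Your second suggestion (extend from the bottom cell by obstruction theory, ``using that the relevant obstruction lies in a range killed by connectivity'') fails: $S^{2n+1}\{p^r\}$ has nonvanishing mod-$p$ homology in infinitely many degrees, so $\Sigma S^{2n+1}\{p^r\}$ has cells in arbitrarily high dimensions, and the extension obstructions land in $H^{k+1}\big(\Sigma S^{2n+1}\{p^r\},P^{2n+2}(p^r);\pi_k(P^{2n+2}(p^r))\big)$ for $k$ ranging to infinity. There is no connectivity bound making these vanish, and the hypothesis $2n+2p-3<d$ concerns the dimension of $X$, not the cell structure of $\Sigma S^{2n+1}\{p^r\}$, so it does not help here. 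The issue is not, as you framed it, that the higher cells create ``ambiguity''; it is that you have not shown $\sigma$ exists at all.

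The paper sidesteps obstruction theory entirely by producing the (adjoint) map canonically. Since $j\circ p^r\simeq\ast$ (by definition of the Moore space $P^{2n+2}(p^r)$ as the cofibre of $p^r$), the square with $p^r\colon S^{2n+1}\to S^{2n+1}$ on top, $\ast\to P^{2n+2}(p^r)$ on the bottom, and $j$ on the right commutes up to homotopy, so there is an induced map of homotopy fibres $s\colon S^{2n+1}\{p^r\}\to\Omega P^{2n+2}(p^r)$. This $s$ is your $\sigma$ in adjoint form, obtained without any extension argument. The compatibility you want ($\Omega\iota_p\circ s\simeq i_p$) is then a single cohomology check: both are maps into $K(\mathbb{Z}/p^r,2n)$, hence determined by their class in $H^{2n}(S^{2n+1}\{p^r\};\mathbb{Z}/p^r)\cong\mathbb{Z}/p^r$, and they agree because $s$ induces an isomorphism on that group. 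Once $s$ is in hand, the lift is $\widetilde{g}=\text{adjoint of }(s\circ\overline{g}')$, which is the same as your $\sigma\circ\Sigma\overline{g}$. So your plan is salvageable, but you need to replace the obstruction-theoretic construction of $\sigma$ with the induced-map-of-fibres construction.
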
 
\begin{proof}
There is a homotopy fibration diagram 
\[
  \diagram 
       S^{2n+1}\{p^{r}\}\rto\dto^{s} & S^{2n+1}\rto^-{p^{r}}\dto 
           & S^{2n+1}\dto^{j} \\ 
       \Omega P^{2n+2}(p^{r})\rto & \ast\rto & P^{2n+2}(p^{r}) 
  \enddiagram 
\]  
where $s$ is an induced map of fibres. 
Localize away from any prime $p$ satisfying $2n+2p-3<d$. Consider the homotopy commutative diagram 
  \[
   \diagram
   & S^{2n+1}\{p^r\} \dto^{i_p}   \rto^{s}   & \Omega P^{2n+2}(p^r)  \dlto^{\Omega \iota_p }  \\
   X\urto^{\overline{g}'} \rto^<<<<<{g'}  & \Omega K(\mathbb{Z}/p^r, 2n+1)
   \enddiagram
   \]
  where $g'$ is the adjoint of $g$, the left triangle is obtained by Lemma \ref{evenlift-lemma}, and the right triangle homotopy commutes as $s$ induces an isomorphism on $H^{2n}(-;\mathbb{Z}/p^r)$. Then the adjoint of $s\circ \overline{g}'$ is a lift of $g$.
\end{proof}

\begin{corollary}\label{Sp-coro}
Let $X$ be a path-connected finite $CW$-complex of dimension $d$. Suppose that $y\in H^{j+1}(\Sigma^2 X;\mathbb{Z}/p^r)$ is represented by a map $g: \Sigma^2 X\stackrel{}{\larrow} K(\mathbb{Z}/p^r, j+1)$. Then after localization away from any prime $p$ satisfying $j+2p-4<d$, there is a lift
   \[
   \diagram
   & P^{j+2}(p^r) \dto^{\iota_p} \\
  \Sigma^2 X\urto^{\widetilde{g}} \rto^<<<<<{g}  & K(\mathbb{Z}/p^r, j+1)
   \enddiagram
   \]
  for some map $\widetilde{g}$. 
\end{corollary}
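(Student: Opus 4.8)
The strategy is to follow the proof of Corollary~\ref{S-coro}, splitting on the parity of $j$, but lifting through a Moore space by Lemma~\ref{evenlift-lemma2} in place of the sphere lifting of Lemma~\ref{oddlift-lemma}; throughout I use $\Sigma P^{m}(p^r)\simeq P^{m+1}(p^r)$, so that $P^{j+2}(p^r)$ is $P^{2n+2}(p^r)$ when $j=2n$ is even and is $\Sigma P^{2n+2}(p^r)=P^{2n+3}(p^r)$ when $j=2n+1$ is odd. Suppose first that $j=2n$ is even, so $j+1=2n+1$ is odd. Then $\Sigma^2 X=\Sigma(\Sigma X)$ is the suspension of the path-connected finite complex $\Sigma X$, whose dimension is $d+1$, and the hypothesis $j+2p-4<d$ is exactly the condition $2n+2p-3<d+1$. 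Applying Lemma~\ref{evenlift-lemma2} with $\Sigma X$ in the role of $X$ and with the class $y\in H^{2n+1}(\Sigma^2 X;\mathbb{Z}/p^r)$ then gives the required lift $\widetilde g\colon\Sigma^2 X\to P^{2n+2}(p^r)=P^{j+2}(p^r)$ of $g$, and we are done in this case.

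The remaining case is $j=2n+1$ odd, so $j+1=2n+2$ is even; here the extra suspension in $\Sigma^2 X$ is genuinely used, since after passing to an adjoint once we must still be left with a suspension on which to invoke Lemma~\ref{evenlift-lemma2}. I would pass to the adjoint $g'\colon\Sigma X\to\Omega K(\mathbb{Z}/p^r,2n+2)\simeq K(\mathbb{Z}/p^r,2n+1)$ of $g$, which represents a class in $H^{2n+1}(\Sigma X;\mathbb{Z}/p^r)$. Now $\Sigma X$ is the suspension of the finite complex $X$ of dimension $d$, and $j+2p-4<d$ is exactly $2n+2p-3<d$, so Lemma~\ref{evenlift-lemma2} yields a lift $\widetilde g'\colon\Sigma X\to P^{2n+2}(p^r)$ of $g'$ through $\iota_p$. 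Set $\widetilde g:=\Sigma\widetilde g'\colon\Sigma^2 X\to\Sigma P^{2n+2}(p^r)=P^{2n+3}(p^r)=P^{j+2}(p^r)$. It then remains to check that $\widetilde g$ lifts $g$ and not merely $\Sigma g'$: letting $e\colon\Sigma K(\mathbb{Z}/p^r,2n+1)\to K(\mathbb{Z}/p^r,2n+2)$ be the counit of the loop--suspension adjunction, one has $g\simeq e\circ\Sigma g'$, and since $\iota_p\circ\widetilde g'\simeq g'$ this gives $\iota_p\circ\widetilde g\simeq(e\circ\Sigma\iota_p)\circ\Sigma\widetilde g'\simeq e\circ\Sigma g'\simeq g$; the one thing being used here is that $e\circ\Sigma\iota_p\colon P^{2n+3}(p^r)\to K(\mathbb{Z}/p^r,2n+2)$ is the adjoint of $\iota_p\colon P^{2n+2}(p^r)\to K(\mathbb{Z}/p^r,2n+1)$, so it carries the fundamental class to the cohomology suspension of a generator of $H^{2n+1}(P^{2n+2}(p^r);\mathbb{Z}/p^r)$, which is again a generator, whence $e\circ\Sigma\iota_p\simeq\iota_p$ for compatibly chosen $\iota_p$'s.

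I expect the only mildly delicate point to be this last identification in the odd case, but it is purely formal bookkeeping with the loop--suspension adjunction, resting on the fact that a map into an Eilenberg--MacLane space is detected by the single cohomology class it represents, exactly as in the suspension step at the end of the proof of Corollary~\ref{S-coro}. Concretely one fixes, once and for all, equivalences $\Omega K(\mathbb{Z}/p^r,m+1)\simeq K(\mathbb{Z}/p^r,m)$ and maps $\iota_p\colon P^{m}(p^r)\to K(\mathbb{Z}/p^r,m-1)$ that are compatible under suspension. Everything else is a mechanical translation of the numerical hypothesis $j+2p-4<d$ into the bound required by Lemma~\ref{evenlift-lemma2}, so I do not anticipate any genuine obstacle.
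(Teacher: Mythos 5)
Your proof is correct and follows essentially the same route as the paper's: split on the parity of $j$, apply Lemma~\ref{evenlift-lemma2} directly (in the even case, with $\Sigma X$ playing the role of the lemma's $X$), and in the odd case pass to the adjoint, lift, and suspend. The extra verification you supply at the end of the odd case---that the counit $e\circ\Sigma\iota_p$ is again a valid choice of $\iota_p$ because it represents a generator of $H^{j+1}(P^{j+2}(p^r);\mathbb{Z}/p^r)$---is exactly the bookkeeping the paper compresses into ``taking adjoint, we see that $\widetilde g:=\Sigma\widetilde g'$ is a lift of $g$.''
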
 
\begin{proof}
If $j=2n$ is even, the corollary follows by applying Lemma \ref{evenlift-lemma2} to $\Sigma^2 X\stackrel{g}{\larrow} K(\mathbb{Z}/p^r, j+1)$. 
If $j=2n+1$ is odd, localize away from any prime $p$ satisfying $j+2p-4<d$. By applying Lemma \ref{evenlift-lemma2} to the adjoint map $\Sigma X \stackrel{g'}{\larrow}   \Omega K(\mathbb{Z}/p^r, j+1)\simeq K(\mathbb{Z}/p^r, j)$ of $g$, we obtain a lift 
 \[
   \diagram
   & P^{j+1}(p^r) \dto^{\iota_p} \\
   \Sigma X\urto^{\widetilde{g}'} \rto^<<<<<{g'}  & \Omega K(\mathbb{Z}/p^r, j+1)
   \enddiagram
   \]
for some map $\widetilde{g}'$. Taking adjoint, we see that $\widetilde{g}:=\Sigma \widetilde{g}'$ is a lift of $g$. 
\end{proof}

If $X$ is a path-connected finite $CW$-complex of dimension $d$ and connectivity $s$, 
let 
\begin{equation}\label{QX-eq}
\calQ(X)=\{ \text{prime}~q~|~s+2q-2<d\}.
\end{equation}
Note that $\mathcal{Q}(X)\subseteq \calP(X)$ is a finite set of primes. Additionally, if $X$ is a Moore space, $\calQ(X)$ is an empty set. 
Recall a sphere is a special example of Moore space. 

\begin{proposition} 
   \label{Xwedge-prop2} 
   Let $X$ be a path-connected finite $CW$-complex of dimension $d$ and connectivity $s$. 
   Then after localization away from $\calQ(X)$, $\Sigma^2 X$ is homotopy equivalent to a wedge of Moore spaces. 
\end{proposition}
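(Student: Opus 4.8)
The plan is to follow the proof of Proposition~\ref{Xwedge-prop1} verbatim, except that now torsion is allowed in homology and the torsion classes are fed into the Moore space lifting result Corollary~\ref{Sp-coro} instead of into the sphere lifting result Corollary~\ref{S-coro}. First I would localize away from $\calQ(X)$. Since $X$ is finite, $\widetilde{H}^{\ast}(\Sigma^{2}X;\mathbb{Z})$ is then a finitely generated module over the localized integers, concentrated in degrees between $s+3$ and $d+2$, and I fix once and for all a decomposition of it into cyclic summands: free summands $\mathbb{Z}\langle x_{k}\rangle$ with $x_{k}\in\widetilde{H}^{j_{k}+1}(\Sigma^{2}X;\mathbb{Z})$, and torsion summands $\mathbb{Z}/p_{l}^{r_{l}}\langle z_{l}\rangle$ with $z_{l}\in\widetilde{H}^{n_{l}}(\Sigma^{2}X;\mathbb{Z})$ of order exactly $p_{l}^{r_{l}}$ (here each $p_{l}\notin\calQ(X)$ automatically). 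The goal is then to build maps $\widetilde{f}_{k}\colon\Sigma^{2}X\larrow S^{j_{k}+1}$ detecting $x_{k}$ and maps $\overline{g}_{l}\colon\Sigma^{2}X\larrow P^{n_{l}}(p_{l}^{r_{l}})$ detecting $z_{l}$, to form the composite
\[
\Phi\colon \Sigma^{2}X \stackrel{\mu'}{\larrow} \bigvee \Sigma^{2}X \larrow \Bigl(\bigvee_{k}S^{j_{k}+1}\Bigr)\vee\Bigl(\bigvee_{l}P^{n_{l}}(p_{l}^{r_{l}})\Bigr)
\]
with $\mu'$ the iterated comultiplication on the double suspension $\Sigma^{2}X$ and the second map the wedge of the $\widetilde{f}_{k}$ and the $\overline{g}_{l}$, and finally to check that $\Phi$ is a homotopy equivalence.

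For a free generator $x_{k}$, I would represent it by $f_{k}\colon\Sigma^{2}X\larrow K(\mathbb{Z},j_{k}+1)$, write $\Sigma^{2}X=\Sigma(\Sigma X)$ with $\Sigma X$ a finite complex of dimension $d+1$, and apply Corollary~\ref{S-coro} with $\Sigma X$ in the role of its ``$X$''. This produces a lift $\widetilde{f}_{k}$ to $S^{j_{k}+1}$ after inverting every prime $p$ with $j_{k}+2p-3<d+1$; since $\Sigma^{2}X$ is $(s+2)$-connected one has $j_{k}\geq s+2$, so any such exceptional prime satisfies $s+2p-2\leq j_{k}+2p-4<d$ and already lies in $\calQ(X)$. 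Hence $\widetilde{f}_{k}$ exists after localization away from $\calQ(X)$, and by construction it detects $x_{k}$ in integral cohomology.

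The torsion summands require more care, and I expect this to be the main obstacle. An order-$p_{l}^{r_{l}}$ class $z_{l}\in\widetilde{H}^{n_{l}}(\Sigma^{2}X;\mathbb{Z})$ is identified by the universal coefficient theorem with a $\mathbb{Z}/p_{l}^{r_{l}}$-summand of $\widetilde{H}_{n_{l}-1}(\Sigma^{2}X;\mathbb{Z})$, so $z_{l}=\beta_{r_{l}}(y_{l})$ for a class $y_{l}\in\widetilde{H}^{n_{l}-1}(\Sigma^{2}X;\mathbb{Z}/p_{l}^{r_{l}})$, where $\beta_{r_{l}}$ is the Bockstein of $0\to\mathbb{Z}\stackrel{p_{l}^{r_{l}}}{\to}\mathbb{Z}\to\mathbb{Z}/p_{l}^{r_{l}}\to 0$. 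I would represent $y_{l}$ by $g_{l}\colon\Sigma^{2}X\larrow K(\mathbb{Z}/p_{l}^{r_{l}},n_{l}-1)$ and apply Corollary~\ref{Sp-coro} with $j+1=n_{l}-1$ to lift $g_{l}$ through $\iota_{p_{l}}\colon P^{n_{l}}(p_{l}^{r_{l}})\larrow K(\mathbb{Z}/p_{l}^{r_{l}},n_{l}-1)$; the lift $\overline{g}_{l}$ exists after inverting every prime $p$ with $(n_{l}-2)+2p-4<d$. Since $y_{l}\neq 0$ and $\Sigma^{2}X$ is $(s+2)$-connected one has $n_{l}-1\geq s+3$, so any such exceptional prime satisfies $s+2p-2\leq n_{l}+2p-6<d$ and again lies in $\calQ(X)$. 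Finally, because $\overline{g}_{l}$ lifts $g_{l}$ and $\iota_{p_{l}}$ detects the bottom mod-$p_{l}^{r_{l}}$ cohomology generator of $P^{n_{l}}(p_{l}^{r_{l}})$, naturality of the Bockstein will show that $(\overline{g}_{l})^{\ast}$ sends the generator of $\widetilde{H}^{n_{l}}(P^{n_{l}}(p_{l}^{r_{l}});\mathbb{Z})$ to $z_{l}$. The delicate points here are the degree shift — the Moore space $P^{n_{l}}(p_{l}^{r_{l}})$ carrying the torsion needed in degree $n_{l}$ is governed by a mod-$p_{l}^{r_{l}}$ class one degree lower — and the Bockstein verification that $\overline{g}_{l}$ really detects the intended integral class rather than merely a nonzero one.

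To conclude, I would argue exactly as in Proposition~\ref{Xwedge-prop1}: on integral cohomology $(\mu')^{\ast}$ is the fold (sum) map, so $\Phi^{\ast}$ carries the wedge-summand generator in degree $j_{k}+1$ to $x_{k}$ and the wedge-summand generator in degree $n_{l}$ to $z_{l}$. Since, by the chosen decomposition, these are generators of a cyclic decomposition of $\widetilde{H}^{\ast}(\Sigma^{2}X;\mathbb{Z})$ with matching orders, $\Phi^{\ast}$ is an isomorphism of integral cohomology; as $\Sigma^{2}X$ and the target wedge are simply connected of finite type, $\Phi$ is then an isomorphism on integral homology and hence a homotopy equivalence by the Whitehead theorem. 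The only real work is the torsion paragraph above; the numerical checks that each exceptional prime lands in $\calQ(X)$ are routine, being driven entirely by the $(s+2)$-connectivity of $\Sigma^{2}X$ as in the sphere case.
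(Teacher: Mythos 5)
Your proof is correct and follows essentially the same route as the paper: localize away from $\calQ(X)$, split the (co)homology into free and torsion cyclic pieces, lift the free pieces to spheres via Corollary~\ref{S-coro} and the torsion pieces to Moore spaces via Corollary~\ref{Sp-coro}, assemble the lifts using the iterated comultiplication on the double suspension, and conclude by the Whitehead theorem. The only cosmetic difference is that the paper chooses a basis of integral homology and dualizes, whereas you work directly with a cyclic decomposition of integral cohomology and use the Bockstein $\beta_r$ to produce the mod-$p^r$ class fed into Corollary~\ref{Sp-coro}; your Bockstein-naturality check of the torsion detection makes explicit what the paper's phrase ``$\widetilde g_\ell$ detects $b_\ell$'' leaves implicit.
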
 
\begin{proof}
Localize away from $\calQ(X)$. Let $\{a_k, b_\ell\}\in \widetilde{H}_\ast(\Sigma^2 X;\mathbb{Z})$ be an additive basis such that each $a_k$ is of infinite order and each $b_\ell$ is of order $p_\ell^{r_\ell}$.

For the torsion free homology class $a_k$, denote by $x_k\in H^{j_k+1}(\Sigma^2 X;\mathbb{Z})$ its dual class. It can be represented by a map $f_k: \Sigma^2 X\larrow K(\mathbb{Z}, j_k+1)$. Note that $j_k\geq s+2$. 
By Corollary \ref{S-coro}, each $f_k: \Sigma^2 X\larrow K(\mathbb{Z}, j_k+1)$ can be lifted to a map $\widetilde{f}_k: \Sigma^2 X\larrow S^{j_k+1}$. In particular, $\widetilde{f}_k$ detects the homology class $a_k$.

For the torsion homology class $b_\ell$, denote by $y_\ell \in H^{t_\ell+1}(\Sigma^2 X;\mathbb{Z}/p_\ell^{r_\ell})$ the dual class of its mod-$p_\ell^{r_\ell}$ reduction. It can be represented by a map $g_\ell: \Sigma^2 X\larrow K(\mathbb{Z}/p_\ell^{r_\ell}, t_\ell+1)$. Note that $t_\ell\geq s+2$. 
By Corollary \ref{Sp-coro}, each $g_\ell: \Sigma^2 X\larrow K(\mathbb{Z}/p_\ell^{r_\ell}, t_\ell+1)$ can be lifted to a map $\widetilde{g}_\ell: \Sigma^2 X\larrow P^{t_\ell+2}(p_\ell^{r_\ell})$. In particular, $\widetilde{g}_\ell$ detects the homology class $b_\ell$. 

Consider the composite
\[
\Psi: \Sigma^2 X \stackrel{\mu'}{\larrow} \Big(\bigvee_{k} \Sigma X \Big)\vee \Big(\bigvee_{\ell} \Sigma X \Big) \stackrel{\Big(\mathop{\bigvee}\limits_{k} \widetilde{f}_k \Big) \vee \Big(\mathop{\bigvee}\limits_{\ell} \widetilde{g}_\ell\Big)}{\lllarrow}  \Big(\bigvee_{k} S^{j_k+1}\Big) \vee \Big(\bigvee_{\ell} P^{t_\ell+2}(p_\ell^{r_\ell})\Big),
\]
where $\mu'$ is the iterated comultiplication. Since $\widetilde{f}_k$ and $\widetilde{g}_\ell$ detect the homology class $a_k$ and $b_\ell$ respectively, the map $\Psi$ induces an isomorphism on homology. Therefore, the Whitehead theorem implies that $\Psi$ is a homotopy equivalence.  
\end{proof}

\section{General homotopy cofibres}
\label{sec: gen}
In this section, we prove Theorems \ref{gen-hyper-thm} and \ref{gen-hyper-thm2}. For the homotopy cofibrations in their statements, the homotopy cofibres can be infinite.

\subsection{Beben-Theriault's decomposition}
\label{sec: BT}
The following theorem proved in~\cite{BT22} will be frequently used. 
\begin{theorem} 
   \label{GTcofib} 
   Let 
   \(\nameddright{\Sigma A}{h}{X}{\varphi}{Y}\) 
   be a homotopy cofibration. Suppose that $h$ is inert. Then there there is a homotopy fibration 
   \[\llnameddright{(\Sigma\Omega Y\wedge A)\vee\Sigma A}{} 
           {X}{\varphi}{Y}\] 
   which splits after looping to give a homotopy equivalence 
   \[\hspace{5cm}\Omega X\simeq\Omega Y\times\Omega((\Sigma\Omega Y\wedge A)\vee\Sigma A). 
         \hspace{5cm}\Box\] 
\end{theorem}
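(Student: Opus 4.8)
The plan is to prove Theorem~\ref{GTcofib} following the strategy of Beben--Theriault, adapted from the special case they treated. The statement asserts three things: first, that under the inertness hypothesis there is a homotopy fibration $(\Sigma\Omega Y\wedge A)\vee\Sigma A \to X \xrightarrow{\varphi} Y$; second, that this fibration splits after looping; and third, the resulting product decomposition of $\Omega X$. I would organize the argument around the homotopy fibre $F$ of $\varphi$ and identify it explicitly.

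\textbf{Step 1: Identify the homotopy fibre of $\varphi$.} Let $F \xrightarrow{} X \xrightarrow{\varphi} Y$ be the homotopy fibration. Since $\Sigma A \xrightarrow{h} X \xrightarrow{\varphi} Y$ is a homotopy cofibration, I would pull back the path-loop fibration over $Y$ along $\varphi$ and analyze the resulting space. The key tool is the James/Gray-type identification: for a cofibration $\Sigma A \to X \to Y$, the homotopy fibre of $\varphi$ receives a map from $\Sigma A$ (the lift of $h$, which is null in $Y$) and, more precisely, there is a natural map $(\Sigma\Omega Y \wedge A)\vee \Sigma A \to F$. The summand $\Sigma\Omega Y\wedge A$ arises as the "next stage": thinking of $F$ via the Mather cube or via the fact that $F$ is built from $\Sigma A$ by attaching cells indexed by $\Omega Y$, one gets a map whose effect on homology (via the Serre spectral sequence for $F \to X \to Y$, or equivalently the cofibration sequence) is surjective. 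This is exactly where one uses the cofibration structure: $\widetilde H_*(X) \cong \widetilde H_*(Y)\oplus \widetilde H_*(\Sigma A)$ as coalgebras modulo the relevant filtration, and the Serre spectral sequence computation shows $\widetilde H_*(F)$ is a free $H_*(\Omega Y)$-module on $\widetilde H_*(\Sigma A)$, i.e. $\widetilde H_*(F)\cong H_*(\Omega Y)\otimes \widetilde H_*(A)[1] \oplus \widetilde H_*(\Sigma A)$... which matches $\widetilde H_*((\Sigma\Omega Y\wedge A)\vee\Sigma A)$.

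\textbf{Step 2: Use inertness to show the map is a homology isomorphism, hence an equivalence.} Inertness says $\Omega\varphi$ has a right homotopy inverse $s\colon \Omega Y \to \Omega X$. This gives a splitting $\Omega X \simeq \Omega Y \times F'$ where $F'$ is the homotopy fibre of $\Omega\varphi$, i.e. $F'\simeq \Omega F$. So $H_*(\Omega F)$ is the "other factor"; comparing Poincaré series (or comparing via the loop space homology and the bar/cobar machinery) forces the candidate map $(\Sigma\Omega Y\wedge A)\vee\Sigma A \to F$ to induce an isomorphism on homology. Both source and target are simply connected (here one needs $A$ connected, $Y$ simply connected, which holds by hypothesis), so Whitehead's theorem upgrades this to a homotopy equivalence. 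This establishes the first two claims: the fibration and its looped splitting (the splitting of $\Omega F \to \Omega X \to \Omega Y$ coming directly from $s$).

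\textbf{Step 3: Deduce the product decomposition.} Once $F \simeq (\Sigma\Omega Y\wedge A)\vee\Sigma A$ and the fibration $F \to X \xrightarrow{\varphi} Y$ splits after looping via the section $s$ of $\Omega\varphi$, the multiplication map $\Omega F \times \Omega Y \xrightarrow{(\Omega\iota)\cdot s} \Omega X$ is a homotopy equivalence by the standard argument (it is an $H$-map inducing a homology isomorphism, via the Künneth theorem and the Serre spectral sequence of the looped fibration which now collapses). Substituting the identification of $F$ gives $\Omega X \simeq \Omega Y \times \Omega((\Sigma\Omega Y\wedge A)\vee\Sigma A)$.

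\textbf{Main obstacle.} The delicate point is Step~1: constructing the natural map $(\Sigma\Omega Y\wedge A)\vee\Sigma A \to F$ and verifying it is surjective on homology before invoking inertness — that is, making the "free module" structure of $H_*(F)$ precise without circularity. Beben--Theriault handle this by an explicit geometric construction (a relative James construction / the Gray filtration of the fibre of a cofibration map), and the homology computation is a Serre spectral sequence argument using that $\widetilde H_*(X)\to\widetilde H_*(Y)$ is a split surjection of coalgebras. Since this theorem is quoted from~\cite{BT22}, in the present paper I would simply cite it; were I proving it from scratch, I would allocate most of the effort to Step~1, as Steps~2 and~3 are then formal consequences of inertness plus Whitehead's theorem.
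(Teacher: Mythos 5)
The paper does not give its own proof of Theorem~\ref{GTcofib}; it is stated as a quotation from Beben--Theriault~\cite{BT22} and used as a black box, which you correctly anticipate at the end of your proposal. Against that source your outline captures the right skeleton: one builds a map from $\Omega Y\ltimes\Sigma A$ into the homotopy fibre $F$ of $\varphi$ (using the null-homotopy of $\varphi\circ h$ together with the holonomy action of $\Omega Y$ on $F$), uses the inertness hypothesis to show it is a homology isomorphism, and concludes by Whitehead; the wedge form in the statement is then just the standard splitting $B\ltimes\Sigma A\simeq(\Sigma B\wedge A)\vee\Sigma A$ applied with $B=\Omega Y$, an identity your sketch uses implicitly but never names. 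One small imprecision worth flagging: in Step~1 you assert that the Serre spectral sequence already shows $\widetilde H_*(F)$ is free over $H_*(\Omega Y)$ before inertness enters, but that freeness is itself a consequence of the section $s\colon\Omega Y\to\Omega X$ (without it the action of $H_*(\Omega Y)$ on $H_*(F)$ need not be free); you do acknowledge the potential circularity in your ``main obstacle'' paragraph, and in the actual argument of~\cite{BT22} the section is used from the outset to control the module structure, so the two steps are genuinely intertwined rather than sequential.
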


\subsection{Proof of Theorem \ref{gen-hyper-thm}}
\label{sec: gen-hyper-thm}
Let us start with two lemmas. 
\begin{lemma}\label{Guylemma}
If there is a loop wedge $\Omega (S^a \vee S^b)$ retracting off $\Omega X$ with $a$, $b\geq 2$, then $X$ is $\mathbb{Z}/p^r$-hyperbolic for all primes $p$ and all $r\geq 1$.

If further $X$ is of finite type and has finite rational Lusternik-Schnirelmann category, then $X$ is rationally hyperbolic. 
\end{lemma}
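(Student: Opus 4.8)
\textbf{Proof proposal for Lemma \ref{Guylemma}.}

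The plan is to reduce everything to known facts about loops on a wedge of two spheres. Since $\Omega(S^a\vee S^b)$ retracts off $\Omega X$, there is a map $\Omega(S^a\vee S^b)\to\Omega X$ that is a split monomorphism in homotopy groups, so $\pi_\ast(S^a\vee S^b)$ is a direct summand of $\pi_\ast(X)$ in each degree; in particular any $\mathbb{Z}/p^r$-summand appearing in $\pi_m(S^a\vee S^b)$ contributes a $\mathbb{Z}/p^r$-summand to $\pi_m(X)$. Hence it suffices to show that $S^a\vee S^b$ is $\mathbb{Z}/p^r$-hyperbolic for all primes $p$ and all $r\geq 1$, and, under the finiteness hypotheses on $X$, that the splitting forces rational hyperbolicity of $X$. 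The retraction also shows $X$ has unbounded rational homotopy (once we know $S^a\vee S^b$ does), and then the rational dichotomy of \cite{FHT82, FHT01} applied to $X$ — which is legitimate precisely because $X$ is simply connected of finite type with finite rational LS-category — gives that $X$ is rationally hyperbolic.

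First I would handle the rational statement. By the Hilton--Milnor theorem, $\Omega(S^a\vee S^b)$ splits as a weak product of loop spaces of spheres indexed by a Hall basis of the free Lie algebra on two generators (in degrees $a-1$ and $b-1$), and the number of basis elements in each degree grows exponentially. Rationally, $\pi_\ast(\Omega(S^a\vee S^b))\otimes\mathbb{Q}$ is the free Lie algebra on two generators, which is infinite-dimensional with exponential growth; transporting this through the retraction, $\pi_\ast(X)\otimes\mathbb{Q}$ is infinite-dimensional, so by the rational dichotomy $X$ is rationally hyperbolic. (Alternatively one may quote that a wedge of two or more spheres of dimension $\geq 2$ is rationally hyperbolic and that rational hyperbolicity passes to a space retracting onto it at the level of loop spaces.)

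Next the local statement. By Hilton--Milnor, $\Omega(S^a\vee S^b)\simeq\prod_w \Omega S^{n_w}$ where $w$ runs over a Hall basis of the free Lie algebra on generators of degrees $a-1,b-1$, and $\#\{w : n_w\leq n\}$ grows exponentially in $n$. For any fixed prime $p$ and $r\geq 1$, each factor $\Omega S^{n_w}$ with $n_w$ odd contains the $\mathbb{Z}/p^r$-summand $\pi_{n_w+2p^{r-1}(p-1)-2}$ coming, for instance, from the image of a $\mathbb{Z}/p^r$ in $\pi_\ast(S^{n_w})$ detected by the mod-$p^r$ Moore space / the relevant Gray or $\alpha$-family element (concretely, $S^{n_w}\{p^r\}$ has $\pi_{n_w-1}\cong\mathbb{Z}/p^r$, feeding back a $\mathbb{Z}/p^r$ into $\pi_\ast(S^{n_w})$ via the fibration $S^{n_w}\{p^r\}\to S^{n_w}\xrightarrow{p^r}S^{n_w}$). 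Since a positive proportion of the Hall basis elements have odd degree and these $\mathbb{Z}/p^r$-summands sit in distinct factors hence are independent summands of $\pi_\ast(\Omega(S^a\vee S^b))$, the counting function $t_n$ satisfies $\liminf_n (\ln t_n)/n>0$. Pulling back along the retraction $\Omega(S^a\vee S^b)\hookrightarrow\Omega X$ gives the same exponential lower bound for the $\mathbb{Z}/p^r$-summands of $\pi_\ast(X)$, so $X$ is $\mathbb{Z}/p^r$-hyperbolic.

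I expect the main obstacle to be the bookkeeping in the last step: namely making precise that a definite positive fraction of the Hall basis elements on two odd-or-even generators land in odd degrees (so the relevant $\mathbb{Z}/p^r$ appears), and that the resulting $\mathbb{Z}/p^r$-summands in different Hilton--Milnor factors remain independent summands of $\pi_\ast(X)$ after the retraction, with their degrees still growing only linearly so the exponential count survives. The parity issue can be dispatched by noting that the free Lie algebra on two generators already contains, for each $k$, at least $c\cdot 2^k/k$ brackets of word-length $k$ whose internal degree has a prescribed parity (one may even restrict to brackets built from a single odd-dimensional generator if $a$ is odd, or from an iterated bracket of mixed parity otherwise), which keeps the growth exponential. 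Everything else is a direct application of Hilton--Milnor, the standard torsion in homotopy of odd spheres, and the rational dichotomy, all of which may be invoked as black boxes.
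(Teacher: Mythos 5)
Your overall reduction — transport $\mathbb{Z}/p^r$-summands along the retraction, so it suffices to show $S^a\vee S^b$ is $\mathbb{Z}/p^r$-hyperbolic, and then use the rational dichotomy for the rational statement — is the same as the paper's. The difference is that the paper dispatches the key step by simply citing \cite[Theorem~1]{Boy22}, which proves that $S^a\vee S^b$ is $\mathbb{Z}/p^r$-hyperbolic for all $p$ and all $r\geq 1$, whereas you attempt a direct Hilton--Milnor argument. That direct argument has a genuine gap.

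The problem is the mechanism you propose for producing $\mathbb{Z}/p^r$-summands. You write that $S^{n_w}\{p^r\}$ has $\pi_{n_w-1}\cong\mathbb{Z}/p^r$, ``feeding back a $\mathbb{Z}/p^r$ into $\pi_\ast(S^{n_w})$ via the fibration $S^{n_w}\{p^r\}\to S^{n_w}\xrightarrow{p^r}S^{n_w}$.'' This is not how the long exact sequence works: the group $\pi_{n_w-1}(S^{n_w}\{p^r\})\cong\mathbb{Z}/p^r$ arises as a cokernel of multiplication by $p^r$ on $\pi_{n_w}(S^{n_w})\cong\mathbb{Z}$, and it maps onward to $\pi_{n_w-1}(S^{n_w})=0$; it does not inject into, nor produce a summand of, $\pi_\ast(S^{n_w})$. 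More to the point, the existence of a $\mathbb{Z}/p^r$-summand in $\pi_\ast(S^{2m+1})$ for arbitrary $r$ (with $m$ large enough) is a deep fact — by Cohen--Moore--Neisendorfer the $p$-exponent of $S^{2m+1}$ is exactly $p^m$, so you would need $m\geq r$ and then still need to know that a summand of exactly that order actually occurs (Gray's theorem, or image-of-$J$ considerations at odd primes); the degree $n_w+2p^{r-1}(p-1)-2$ you quote is asserted without justification and is not the degree of the classical $\alpha_1$-type element, which has order $p$, not $p^r$. In short, getting a $\mathbb{Z}/p^r$-summand (for $r>1$) out of an individual sphere factor is precisely the hard part, and it is exactly what Boyde's $K$-theoretic argument in \cite{Boy22} is designed to handle. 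Your Hilton--Milnor bookkeeping on the number and parity of Hall basis elements is fine and the rational half of the argument is correct, but without replacing the ``feeding back'' step by a correct citation of a torsion-in-spheres theorem (or by invoking \cite{Boy22} as the paper does), the local hyperbolicity claim is not established.
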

\begin{proof}
By \cite[Theorem 1]{Boy22}, $S^{a}\vee S^{b}$ is $\mathbb{Z}/p^r$-hyperbolic for all primes $p$ and all $r\geq 1$. Then so is $X$. Also, it is well-known that $S^{a}\vee S^{b}$ is rationally hyperbolic, and so is $X$.
\end{proof}

\begin{lemma}\label{StYlemma}
Let $f: S^{t}\larrow Y$ be a map. If $\Omega f$ has a left homotopy inverse, then $S^t$ retracts off $\Sigma \Omega Y$.
\end{lemma}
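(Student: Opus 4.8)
The plan is to produce a left homotopy inverse $S^t \larrow \Sigma\Omega Y$ to some map $\Sigma\Omega Y \larrow S^t$, building directly on the hypothesis that $\Omega f \colon \Omega S^t \larrow \Omega Y$ has a left homotopy inverse. First I would observe that the natural candidate for the map $\Sigma\Omega Y \larrow S^t$ is the composite of $\Sigma$ applied to a left inverse $r\colon \Omega Y \larrow \Omega S^t$ of $\Omega f$ with the canonical evaluation (counit) map $\Sigma\Omega S^t \larrow S^t$; call this composite $e$. In the other direction, the natural candidate $S^t \larrow \Sigma\Omega Y$ is $\Sigma(\Omega f)$ precomposed with the standard inclusion (unit) map $S^t = \Sigma\Omega S^t \larrow \Sigma\Omega Y$? — more carefully, it is the suspension of the section $s\colon S^{t-1} \larrow \Omega S^t$ (the adjoint of the identity on $S^t$) followed by $\Sigma\Omega f$; call this $\iota$. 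The claim is then that $e \circ \iota \simeq \mathrm{id}_{S^t}$.

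The key step is to verify this composite is the identity, which I would do by adjointing everything down to $S^{t-1}$. Using that $\Sigma\colon [\Sigma\Omega S^t, S^t]$ relates to $[\Omega S^t, \Omega S^t]$ and tracking the counit–unit triangle identity for the loop–suspension adjunction, the composite $e\circ\iota$ becomes, up to the adjunction, the composite $\Omega S^t \stackrel{\Omega f}{\larrow} \Omega Y \stackrel{r}{\larrow} \Omega S^t$ restricted along $s$, i.e. exactly $(r\circ\Omega f)\circ s \simeq \mathrm{id}\circ s \simeq s$, whose adjoint is $\mathrm{id}_{S^t}$. In other words, the whole argument reduces to the triangle identity for the $(\Sigma,\Omega)$ adjunction combined with $r\circ\Omega f \simeq \mathrm{id}$. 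I would spell out the adjunction bookkeeping just enough to make this precise, probably via the commuting square expressing naturality of the evaluation map $\mathrm{ev}\colon \Sigma\Omega(-) \larrow (-)$ applied to $f$, which gives $e \circ \Sigma\Omega f \simeq \mathrm{ev}_{S^t}$ after using $r\circ\Omega f\simeq\mathrm{id}$, and then precompose with $\Sigma s$ and use $\mathrm{ev}_{S^t}\circ\Sigma s \simeq \mathrm{id}_{S^t}$.

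The main obstacle, such as it is, is purely the adjunction and naturality bookkeeping: one must be careful that the "left homotopy inverse of $\Omega f$" is used in the correct variance and that the evaluation/section maps are the honest unit and counit, so that the triangle identity applies on the nose rather than merely up to an unspecified self-map of $S^t$. There is no deep input needed beyond this; localization plays no role since the statement is integral, and finiteness of the complexes is irrelevant here. Once the triangle identity is invoked correctly, the retraction is immediate, so I would keep the write-up short: define $e$ and $\iota$, state that $e\circ\iota\simeq\mathrm{id}_{S^t}$ follows from naturality of $\mathrm{ev}$ together with the hypothesis $r\circ\Omega f\simeq\mathrm{id}_{\Omega S^t}$, and conclude that $S^t$ retracts off $\Sigma\Omega Y$.
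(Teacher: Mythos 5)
Your proposal is correct and is essentially the paper's proof: both use the composite $S^t \xrightarrow{i=\Sigma s} \Sigma\Omega S^t \xrightarrow{\Sigma\Omega f} \Sigma\Omega Y \xrightarrow{\Sigma r} \Sigma\Omega S^t \xrightarrow{\mathrm{ev}} S^t$ and collapse it to $\mathrm{ev}\circ i \simeq \mathrm{id}_{S^t}$ via $\Sigma(r\circ\Omega f)\simeq\mathrm{id}$ and the triangle identity. One small note: the middle cancellation is just functoriality of $\Sigma$ applied to $r\circ\Omega f\simeq\mathrm{id}$, not naturality of $\mathrm{ev}$ as you suggest, but this does not affect the argument.
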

\begin{proof}
Let $r: \Omega Y\larrow \Omega S^{t}$ be a left homotopy inverse $\Omega f$. Consider the composite
\[
S^{t}\stackrel{i}{\larrow}\Sigma \Omega S^{t}  \stackrel{\Sigma \Omega f}{\larrow} \Sigma \Omega Y\stackrel{\Sigma r}{\larrow}  \Sigma \Omega S^{t} \stackrel{{\rm ev}}{\larrow} S^t,
\] 
where $i$ is the bottom cell inclusion and ${\rm ev}$ is the evaluation map. It is homotopic to the identity map as ${\rm ev}\circ i$ is. Hence, the sphere $S^t$ retracts off $\Sigma \Omega Y$.
\end{proof}

Recall that the finite set $\calP(A)$ of primes is defined for any path connected finite $CW$-complex $A$ in (\ref{PX-eq}).

\begin{proof}[Proof of Theorem \ref{gen-hyper-thm}]
For the two cases of the theorem, we will prove there is a loop wedge $\Omega (S^a\vee S^b)$ retracting off $\Omega X$ in their respective local categories, and then the theorem will follow from Lemma \ref{Guylemma}.
By Theorem \ref{GTcofib}, there is a homotopy equivalence
\[
\Omega X\simeq\Omega Y\times\Omega((\Sigma\Omega Y\wedge A)\vee\Sigma A)
\]
after localization away from $P_1$. 

(1). Localize away from $\calP(A)\cup P_1$. Since $\Sigma A$ is finite, Proposition \ref{Xwedge-prop1} implies that $\Sigma A$ is homotopy equivalent to a wedge of spheres. By the assumption ${\rm dim}(\widetilde{H}^\ast(A;\mathbb{Q}))\geq 2$, there is a wedge of simply connected spheres $S^a \vee S^b$ retracting off $\Sigma A$. Then $\Omega (S^a\vee S^b)$ retracts off $\Omega X$ by the above loop space decomposition.

(2). Localize away from $\calP(A)\cup P_1\cup P_2$. When ${\rm dim}(\widetilde{H}^\ast(A;\mathbb{Q}))\geq 2$, the theorem is proved in (1). Hence, we can suppose that ${\rm dim}(\widetilde{H}^\ast(A;\mathbb{Q}))=1$. Then Proposition \ref{Xwedge-prop1} implies that $\Sigma A\simeq S^a$ for some $a\geq 2$. By assumption, there is a map $S^t\stackrel{}{\larrow} Y$ whose loop has a left homotopy inverse. It follows from Lemma \ref{StYlemma} that $S^t$ retracts off $\Sigma \Omega Y$. Hence there is a sphere $S^b$ retracting off $\Omega Y\wedge S^a$ with $b=a+t-1$. It follows that $S^a\vee S^b$ retracts off $(\Sigma\Omega Y\wedge A)\vee\Sigma A$, and then $\Omega (S^a\vee S^b)$ retracts off $\Omega X$ by the above loop space decomposition.
\end{proof}

\subsection{Proof of Theorem \ref{gen-hyper-thm2}}
\label{sec: gen-hyper-thm2}

Recall that the finite sets $\calP(A)$, $\calQ(A)$ of primes are defined for any path connected finite $CW$-complex $A$ in (\ref{PX-eq}), (\ref{QX-eq}), respectively.

\begin{proof}[Proof of Theorem \ref{gen-hyper-thm2}]

Localize at the odd prime $p$. By Theorem \ref{GTcofib}, there is a homotopy equivalence
\[
\Omega X\simeq\Omega Y\times\Omega((\Sigma\Omega Y\wedge A)\vee\Sigma A).
\]

(a). Since $\Sigma A$ is finite and $p\notin\mathcal{P}(A)$, Proposition \ref{Xwedge-prop1} implies that $\Sigma A$ is homotopy equivalent to a wedge of spheres. As $\Sigma A$ is not rational contractible, there is a sphere $S^a$ retracting off $\Sigma A$, and then $\Sigma^a \Omega Y$ retracts off $\Sigma\Omega Y\wedge A$. Therefore, $\Omega \Sigma^a \Omega Y$ retracts off $\Omega X$ by the above loop space decomposition. To prove that $X$ is $\mathbb{Z}/p^r$-hyperbolic, it suffices to show that $\Sigma^a \Omega Y$ is $\mathbb{Z}/p^r$-hyperbolic.

Recall for any path connected co-$H$-space $W$, there is a map $W \stackrel{i}{\larrow} \Sigma\Omega W$ which is a right homotopy inverse of the evaluation map ${\rm ev}: \Sigma \Omega W\larrow W$.
Consider the homotopy commutative diagram
\[
\diagram
P^{k+1}(p^r) \rto^<<<{i} \drdouble  & \Sigma\Omega P^{k+1}(p^r) \rto^<<<{\Sigma \Omega \mu} \dto^{{\rm ev}} & 
\Sigma \Omega Y \dto^{{\rm ev}} \\
&  P^{k+1}(p^r)  \rto^<<<<<{\mu}  & Y,
\enddiagram
\]
where the right square commutes by the naturality of the evaluation map. Since $\mu$ induces an injection on homology with $\mathbb{Z}/p^r$-coefficient by assumption, so does the composite $\Sigma \Omega \mu\circ i$ on the top row. 
It follows that $\Sigma^{a-1}(\Sigma \Omega \mu\circ i): P^{a+k}(p^r)\larrow \Sigma^a\Omega Y$ induces an injection on homology with $\mathbb{Z}/p^r$-coefficient. Then \cite[Theorem 1.6]{Boy24} implies that $\Sigma^a \Omega Y$ is $\mathbb{Z}/p^r$-hyperbolic.

(b). By assumption, there is a map $S^t\stackrel{}{\larrow} Y$ whose loop has a left homotopy inverse. It follows from Lemma \ref{StYlemma} that $S^t$ retracts off $\Sigma \Omega Y$, and then $\Sigma^t A$ retracts off $\Sigma \Omega Y\wedge A$. Therefore, $\Omega \Sigma^t A$ retracts off $\Omega X$ by the above loop space decomposition of $\Omega X$. To prove $X$ is $\mathbb{Z}/p^r$-hyperbolic, it suffices to show that $\Sigma^t A$ is $\mathbb{Z}/p^r$-hyperbolic.

As $Y$ is simply connected, we see that $t\geq 2$. Since $\Sigma^t A$ is finite and $p\notin\mathcal{Q}(A)$, Proposition \ref{Xwedge-prop2} implies that $\Sigma^t A$ is homotopy equivalent to a wedge of Moore spaces. As there is an order $p^r$ element in $H_\ast(A;\mathbb{Z})$ by assumption, there is a Moore space $P^{j+1}(p^s)$ retracting off $\Sigma^t A$ for some $s\geq r$ and $j\geq t\geq 2$. By \cite[Theorem 1.3]{Boy24}, $P^{j+1}(p^s)$ is $\mathbb{Z}/p^r$-hyperbolic, and so is $\Sigma^t A$.
\end{proof}

\section{Elliptic homotopy cofibres I: an elementary case}
\label{sec: elliptic}
In this section, we prove Theorems \ref{elliptic-hyper-thm-intro}. For the homotopy cofibration in its statement, the homotopy cofibres is assumed to be rationally elliptic.

\subsection{McGibbon-Wilkerson's theorem}

In 1986, McGibbon and Wilkerson~\cite{MW86} showed the following classical result, which implies that any finite rationally elliptic space, after looping, is a finite product of spheres and loops on spheres provided 
enough primes have been inverted. 

\begin{theorem} 
   \label{McW} 
   Let $X$ be a simply-connected finite rationally elliptic $CW$-complex. Then after localization away from finitely many primes, there is a homotopy equivalence 
   \[\Omega X\simeq\mathop{\prod}\limits_{i\in\mathcal{I}} S^{2m_{i}-1}\times\mathop{\prod}\limits_{j\in\mathcal{J}} \Omega S^{2n_{j}-1}\] 
   for finite index sets $\mathcal{I}$ and $\mathcal{J}$.  ~$\qqed$
\end{theorem}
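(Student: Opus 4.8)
The plan is to produce the decomposition rationally, realize it by an explicit map out of the product, and then promote that map to a homotopy equivalence after inverting a finite set of primes.

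\emph{The rational decomposition.} Since $X$ is finite and rationally elliptic, $L:=\pi_*(\Omega X)\otimes\mathbb{Q}=\pi_{*+1}(X)\otimes\mathbb{Q}$ is finite-dimensional; let $L^{\mathrm{odd}}$ and $L^{\mathrm{ev}}$ be its odd- and even-degree parts. By Milnor--Moore $H_*(\Omega X;\mathbb{Q})\cong UL$, whose underlying graded vector space, by Poincar\'e--Birkhoff--Witt, is $\Lambda(L^{\mathrm{odd}})\otimes\mathbb{Q}[L^{\mathrm{ev}}]$ --- precisely the rational homology of
\[
P:=\prod_{i\in\mathcal{I}}S^{2m_i-1}\times\prod_{j\in\mathcal{J}}\Omega S^{2n_j-1},
\]
where $\mathcal{I}$ indexes a homogeneous basis of $L^{\mathrm{odd}}$ (a degree-$(2m_i-1)$ element giving the sphere $S^{2m_i-1}$) and $\mathcal{J}$ indexes a homogeneous basis of $L^{\mathrm{ev}}$ (a degree-$(2n_j-2)$ element giving the factor $\Omega S^{2n_j-1}$). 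For each of these basis vectors choose a map from a sphere detecting it: for $i\in\mathcal{I}$ a map $\alpha_i\colon S^{2m_i}\to X$ with adjoint $\widehat\alpha_i\colon S^{2m_i-1}\to\Omega X$, and for $j\in\mathcal{J}$ a map $\beta_j\colon S^{2n_j-1}\to X$ with loop $\Omega\beta_j\colon\Omega S^{2n_j-1}\to\Omega X$; moreover arrange the choices so that the resulting classes form a basis of the free part of each of the finitely many relevant homotopy groups of $\Omega X$. Multiplying all of these maps together via the loop multiplication of $\Omega X$ yields $\phi\colon P\to\Omega X$. Since each factor of $P$ contributes a one-dimensional rational homotopy group in a single degree, $\phi$ induces an isomorphism on $\pi_*(-)\otimes\mathbb{Q}$, so $\phi$ is a rational homotopy equivalence of simple spaces.

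\emph{Integralization.} It suffices to show $\phi$ is a $\mathbb{Z}_{(p)}$-equivalence for all $p$ outside some finite set $S$; with the rational equivalence, the fracture square then yields an equivalence after inverting $S$. As $P$ and $\Omega X$ are nilpotent of finite type, by the localized Whitehead theorem we must show $\phi_*\colon H_*(P;\mathbb{F}_p)\to H_*(\Omega X;\mathbb{F}_p)$ is an isomorphism for $p\notin S$. Now $H_*(P;\mathbb{Z})$ is torsion-free with Poincar\'e series equal to the rational Poincar\'e series of $\Omega X$. So it is enough to know, for $p$ outside a finite set, that (i) $H_*(\Omega X;\mathbb{Z}_{(p)})$ is a free graded-commutative $\mathbb{Z}_{(p)}$-algebra on generators lying in exactly the degrees of $L$ (in particular torsion-free, of the rational Poincar\'e series), and (ii) $\phi$ is surjective on those homotopy generating degrees $\mathbb{Z}_{(p)}$-locally --- which holds by the choice of the $\alpha_i,\beta_j$ once $p$ avoids the torsion primes of the finitely many homotopy groups involved. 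Granting (i) and (ii), $\phi_*$ is surjective on the algebra generators of $H_*(\Omega X;\mathbb{Z}_{(p)})$, hence surjective on homology, hence --- having equal finite rank in each degree --- an isomorphism, and we reduce mod $p$. Part (ii) is routine; the real obstacle is part (i).

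\emph{The main obstacle.} Everything hinges on (i): one must bound, uniformly over all degrees and in terms of $\dim X$ and the torsion primes of $H_*(X;\mathbb{Z})$, the primes at which $H_*(\Omega X;\mathbb{Z})$ could acquire torsion or exceed its rational size. For $p$ away from the torsion primes of $H_*(X)$, the Eilenberg--Moore (cobar) spectral sequence ${\rm Cotor}^{H_*(X;\mathbb{Z}_{(p)})}(\mathbb{Z}_{(p)},\mathbb{Z}_{(p)})\Rightarrow H_*(\Omega X;\mathbb{Z}_{(p)})$ has $\mathbb{Z}_{(p)}$-defined input and, $X$ being finite, a vanishing line forcing finitely many differentials in each total degree; the danger is that the primes spoiling collapse or creating torsion might grow with the degree. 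Ruling this out --- showing a single finite $S$ works for all degrees --- is precisely the arithmetic content of the structure theory of rationally elliptic finite complexes \`a la Friedlander--Halperin: after inverting finitely many primes, the rational minimal model of $X$ is realized by a genuine finite tower of principal fibrations, from which one reads off that $H_*(\Omega X;\mathbb{Z}_{(p)})$ is free of the rational Poincar\'e series for all $p\notin S$. Taking $S$ to be the union of the finitely many bad primes from the rational step and from (i)--(ii) then completes the argument.
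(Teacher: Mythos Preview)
The paper does not prove Theorem~\ref{McW}; it is quoted as a black box from McGibbon--Wilkerson~\cite{MW86}, and the only information the paper records about its proof is in Remark~\ref{McWremark}: the $\Omega S^{2n_j-1}$ factors are split off by constructing maps $b\colon X\to S^{2n_j-1}$ admitting right homotopy inverses, yielding fibrations $F\to X\to S^{2n_j-1}$ and hence loop-space splittings $\Omega X\simeq\Omega S^{2n_j-1}\times\Omega F$; one then iterates on $F$. Your approach runs in the opposite direction, assembling a map $\phi\colon P\to\Omega X$ via loop multiplication and trying to show it is an equivalence at large primes. Even if your argument succeeded, note that it would not directly yield the refinement recorded in Remark~\ref{McWremark} (that the equivalence is a loop map on each $\Omega S^{2n_j-1}$ factor), which the paper actually uses downstream in Lemma~\ref{S+loopSinverselemma} and Proposition~\ref{Qpinverse-prop}.

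More seriously, your argument has a genuine gap at exactly the point you flag as the ``main obstacle.'' You correctly isolate the crux: one must show that for a single finite set $S$ of primes, $H_*(\Omega X;\mathbb{Z}_{(p)})$ is torsion-free with the rational Poincar\'e series for every $p\notin S$. But your resolution---``after inverting finitely many primes, the rational minimal model of $X$ is realized by a genuine finite tower of principal fibrations''---is not an independent input you can invoke; it is essentially a restatement of the theorem you are proving. Realizing the finite rational Postnikov tower of $X$ by a finite tower with fibers that are (products of) odd spheres, valid after inverting finitely many primes, \emph{is} the content of~\cite{MW86}: the work lies in lifting the rational $k$-invariants to maps into odd spheres and controlling the primes at which the lifts exist and the resulting stages remain finite complexes so the induction can continue. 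Your Eilenberg--Moore/cobar discussion does not supply this; it only reformulates the difficulty. Absent an actual argument for (i), the proposal is an outline that defers the entire substance of the theorem to an unproved assertion.
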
 

\begin{remark}\label{McWremark}
The homotopy equivalence in Theorem \ref{McW} can be chosen such that it preservers the loop structure on each $\Omega S^{2n_{j}-1}$-factor. Indeed, in the proof of Theorem \ref{McW} (for instance see \cite[Section 32.f]{FHT01} or \cite[Section 8.3.4]{HT25}), in order to retract off the $\Omega S^{2n_{j}-1}$-factor, a homotopy fibration
\[
F\larrow X\stackrel{b}{\larrow} S^{2n_{j}-1}
\] 
is constructed such that $b$ has a right homotopy inverse $S^{2n_{j}-1}\stackrel{\iota}{\larrow} X$ after localization away from finitely many primes. Then the looped composite $\Omega S^{2n_{j}-1} \stackrel{\Omega \iota}{\larrow}\Omega X\stackrel{\Omega b}{\larrow}\Omega S^{2n_{j}-1}$ implies a loop space decomposition $\Omega X\simeq \Omega  S^{2n_{j}-1}\times \Omega F$ that preservers the loop structure on the $\Omega S^{2n_{j}-1}$-factor.
\end{remark}

\subsection{Approximating rational inertness by local inertness}
The main results in this subsection, Proposition \ref{Qpinverse-prop} and its corollary, allow us to approximate rational inertness by local inertness at large primes under the assumption of rational ellipticity. This is the key step to prove Theorem \ref{elliptic-hyper-thm-intro}.

The proof of Proposition \ref{Qpinverse-prop} is divided into several steps. 
For a map $f: X\stackrel{}{\larrow} Y$ between simply connected $CW$-complexes, we denoted by $f_\mathbb{Q}: X_\mathbb{Q}\stackrel{}{\larrow} Y_\mathbb{Q}$ its rationalization. 

\begin{lemma}\label{Sinverselemma}
Let $X\stackrel{g}{\larrow} S^{2m-1}$ be a map of simply connected $CW$-complexes. If $g_\mathbb{Q}$ has a right homotopy inverse $S^{2m-1}_\mathbb{Q}\stackrel{f}{\larrow} X_\mathbb{Q}$, then there is a map $S^{2m-1}\stackrel{f'}{\larrow} X$ which is a right homotopy inverse of $g$ after localization away from finitely many primes.
\end{lemma}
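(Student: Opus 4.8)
The plan is to build the map $f'$ as a rational approximation of $f$ and then correct it into a genuine right homotopy inverse away from finitely many primes, exploiting the fact that $S^{2m-1}$ has a well-understood local homotopy type and only finitely many of its homotopy groups are relevant. First I would observe that since $S^{2m-1}$ is a finite complex, a standard argument (e.g. via the arithmetic square, or by clearing denominators in the rational map $f$) produces a map $f'\colon S^{2m-1}\longrightarrow X$ localized away from some finite set $P$ of primes whose rationalization agrees with a nonzero multiple of $f$; rescaling, we may assume $f'_{\mathbb Q}=f$ after possibly enlarging $P$. More carefully: represent $f$ by a finite composite in the rational homotopy category, note each piece is defined after inverting finitely many primes, and assemble.

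The key point is then to control the composite $g\circ f'\colon S^{2m-1}\longrightarrow S^{2m-1}$. After localization away from $P$, this is an element of $\pi_{2m-1}(S^{2m-1})_{(away\ from\ P)}\cong \mathbb{Z}_{(away\ from\ P)}\oplus (\text{torsion})$, where the torsion is a finite group supported at finitely many primes. By hypothesis $g_{\mathbb Q}\circ f = \mathrm{id}_{S^{2m-1}_{\mathbb Q}}$, so the free part of $g\circ f'$ is a unit (namely $\pm 1$ after the rescaling above, or at worst a rational unit we can absorb). Inverting the finitely many primes dividing the order of the torsion subgroup of $\pi_{2m-1}(S^{2m-1})$ and the finitely many primes occurring in the free-part discrepancy, we arrange that $g\circ f'$ is a homotopy equivalence of $S^{2m-1}$; composing $f'$ with its homotopy inverse gives the desired right homotopy inverse. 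Enlarging $P$ once more to absorb all primes inverted in this process yields the statement.

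The main obstacle I expect is the first step — promoting the purely rational map $f$ to an integral-away-from-$P$ map with controlled rationalization. One must be careful that ``defined after inverting finitely many primes'' is stated at the level of spaces/maps and not just homology; the cleanest route is the fracture (arithmetic) square: $X$ is the homotopy pullback of $X_{\mathbb Q}\to X_{\mathbb{A}f}\leftarrow \prod_p X_{(p)}$, and to lift $f$ it suffices to have compatible data at each prime, which is automatic for all but finitely many $p$ because $S^{2m-1}_{(p)}$ maps into $X_{(p)}$ and the obstruction groups $\widetilde H^{*}(S^{2m-1};\pi_{*}(X))$ are finitely generated, hence torsion-free and matching the rational lift for large $p$. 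A convenient alternative, which avoids invoking fracture squares, is to use that $S^{2m-1}$ localized away from a suitable finite set of primes is a minimal Moore-type space so that a map out of it is determined by a single homology class together with finitely many secondary data, all of which can be matched to $f$ after inverting finitely many primes. Either way, once $f'$ exists the correction step is routine, using only that $\pi_{2m-1}(S^{2m-1})$ has finite torsion and that a self-map of $S^{2m-1}$ of degree a unit in the relevant localization is an equivalence.
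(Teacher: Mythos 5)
Your strategy is the same as the paper's: lift $f$ to an integral class $f''$ and then invert the finitely many primes needed to make the composite $g\circ f''$ a self-equivalence of $S^{2m-1}$. The paper handles the lift without any fracture square or obstruction theory, however — since the domain is a sphere, $[f]$ is literally a fraction $\frac{a}{b}[f''_{\mathbb{Q}}]$ in $\pi_{2m-1}(X)\otimes\mathbb{Q}$, and integrality of the degree of $g\circ f''$ then forces $a=\pm1$ — and there is no torsion issue to worry about, since $\pi_{2m-1}(S^{2m-1})\cong\mathbb{Z}$ is torsion-free.
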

\begin{proof}
Since the composite $S^{2m-1}_\mathbb{Q}\stackrel{f}{\larrow} X_\mathbb{Q}\stackrel{g_\mathbb{Q}}{\larrow} S^{2m-1}_{\mathbb{Q}}$ is homotopic to the identity map, the map $f$ generates a $\mathbb{Q}$-summand of $\pi_{2m-1}(X)\otimes \mathbb{Q}$. It follows that there is a map $f'': S^{2m-1}\stackrel{}{\larrow} X$ generating a $\mathbb{Z}$-summand of $\pi_{2m-1}(X)$ with $[f]=x[f''_\mathbb{Q}]$ for some $x=\frac{a}{b}\in \mathbb{Q}$. 
Consider the composite $S^{2m-1}\stackrel{f''}{\larrow} X\stackrel{g}{\larrow} S^{2m-1}$. Since $[g_\mathbb{Q}\circ f''_\mathbb{Q}]=\frac{b}{a}[g_\mathbb{Q}\circ f]=\frac{b}{a}$, we see that $g\circ f''$ is of degree $\frac{b}{a}$. Hence $a=\pm 1$ as the degree has to be an integer. It follows that $g\circ f''$ is a homotopy equivalence after localization away from all the primes that divides $b$. Precomposing with a homotopy inverse of $g\circ f''$, we see that $f'=f''\circ (g\circ f'')^{-1}$ is a right homotopy inverse of $g$ after localization away from the primes that divides $b$.
\end{proof}

\begin{lemma}\label{loopSinverselemma}
Let $X\stackrel{ g}{\larrow} S^{2n-1}$ be a map of simply connected $CW$-complexes. If $\Omega g_\mathbb{Q}$ has a right homotopy inverse $\Omega S^{2n-1}_\mathbb{Q}\stackrel{f}{\larrow} \Omega X_\mathbb{Q}$, then there is a map $\Omega S^{2n-1}\stackrel{f'}{\larrow} \Omega X$ which is a right homotopy inverse of $\Omega g$ after localization away from finitely many primes.
\end{lemma}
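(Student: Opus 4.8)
The plan is to adapt the proof of Lemma \ref{Sinverselemma}, running it one loop space down: use the bottom cell $S^{2n-2}$ of $\Omega S^{2n-1}$ in place of the sphere appearing there, and exploit the fact that $S^{2n-1}$ is an odd sphere, so that any self-map of non-zero degree becomes a homotopy equivalence once the (finitely many) primes dividing that degree are inverted. The key point is that we only need to \emph{produce some} right homotopy inverse $f'$ of $\Omega g$ after inverting finitely many primes; $f'$ need not be related to $f$ beyond agreeing with it on a bottom-cell class.

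First, restrict the rational right homotopy inverse to a sphere. Let $E\colon S^{2n-2}\larrow\Omega S^{2n-1}$ be the inclusion of the bottom cell, i.e.\ the adjoint of $\mathrm{id}_{S^{2n-1}}$; its rationalization $E_{\mathbb{Q}}$ represents a generator of $\pi_{2n-2}(\Omega S^{2n-1})\otimes\mathbb{Q}\cong\pi_{2n-1}(S^{2n-1})\otimes\mathbb{Q}\cong\mathbb{Q}$. The composite $S^{2n-2}_{\mathbb{Q}}\stackrel{E_{\mathbb{Q}}}{\larrow}\Omega S^{2n-1}_{\mathbb{Q}}\stackrel{f}{\larrow}\Omega X_{\mathbb{Q}}$ then represents a class $\alpha\in\pi_{2n-2}(\Omega X)\otimes\mathbb{Q}\cong\pi_{2n-1}(X)\otimes\mathbb{Q}$ (using $\pi_\ast(X_{\mathbb{Q}})\cong\pi_\ast(X)\otimes\mathbb{Q}$ and $\Omega(X_{\mathbb{Q}})\simeq(\Omega X)_{\mathbb{Q}}$, valid since $2n-2\geq 2$), and since $\Omega g_{\mathbb{Q}}\circ f\simeq\mathrm{id}$ and the natural adjunction isomorphism $\pi_{k}(\Omega(-))\cong\pi_{k+1}(-)$ identifies $(\Omega g)_\ast$ with $g_\ast$, we get $g_\ast\alpha=1$ in $\pi_{2n-1}(S^{2n-1})\otimes\mathbb{Q}\cong\mathbb{Q}$.

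Next, clear denominators to obtain an integral map and then loop. Choose a non-zero integer $M$ with $M\alpha$ in the image of the natural map $\pi_{2n-1}(X)\to\pi_{2n-1}(X)\otimes\mathbb{Q}$, and let $\beta\colon S^{2n-1}\larrow X$ be a preimage. Then $g_\ast[\beta]=Mg_\ast\alpha=M$ in $\pi_{2n-1}(S^{2n-1})\cong\mathbb{Z}$, so $g\circ\beta\colon S^{2n-1}\larrow S^{2n-1}$ has degree $M$. Since $S^{2n-1}$ is a simply connected sphere, $g\circ\beta$ becomes a homotopy equivalence after localization away from the primes dividing $M$, and therefore so does its loop $\Omega(g\circ\beta)=\Omega g\circ\Omega\beta$. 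Setting $f':=\Omega\beta\circ(\Omega(g\circ\beta))^{-1}$, all localized away from the primes dividing $M$, yields $\Omega g\circ f'\simeq\mathrm{id}$, which is exactly the required right homotopy inverse.

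I do not expect a genuine obstacle: the argument is formal and closely parallels Lemma \ref{Sinverselemma}, and the only points needing a little care are the bookkeeping with rationalization (that $f\circ E_{\mathbb{Q}}$ really does represent a class in $\pi_{2n-1}(X)\otimes\mathbb{Q}$, and that $(\Omega g)_\ast$ corresponds to $g_\ast$ compatibly with rationalization) together with the standard fact that a non-zero degree self-map of an odd sphere becomes a homotopy equivalence after inverting the primes dividing the degree.
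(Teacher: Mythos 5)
Your proof is correct and closely parallels the paper's, with one genuine variation in packaging. Both arguments start from the bottom cell $E\colon S^{2n-2}\to\Omega S^{2n-1}$, consider the class represented by $f\circ E_{\mathbb{Q}}$, realize a nonzero integral multiple of it, and then extend over $\Omega S^{2n-1}$ after a degree correction. The paper does this entirely at the loop-space level: it produces $h''\colon S^{2n-2}\to\Omega X$ with $[f\circ E_{\mathbb{Q}}]=\tfrac{a}{b}[h''_{\mathbb{Q}}]$, precomposes with a degree $\pm\tfrac{1}{b}$ self-map (after localizing away from the primes dividing $b$), and then invokes the universal property of the loop suspension --- the $H$-extension of a map $S^{2n-2}\to\Omega X$ to a map $\Omega S^{2n-1}\to\Omega X$ --- concluding by naturality of $H$-extensions, since the $H$-extension of $E$ is the identity. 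You instead adjoint up once to the space level: $f\circ E_{\mathbb{Q}}$ corresponds to $\alpha\in\pi_{2n-1}(X)\otimes\mathbb{Q}$ with $g_{\ast}\alpha=1$, you realize $M\alpha$ by $\beta\colon S^{2n-1}\to X$ so that $g\circ\beta$ has degree $M$, and then loop, setting $f'=\Omega\beta\circ(\Omega(g\circ\beta))^{-1}$ away from the primes dividing $M$. This absorbs the $H$-extension step into the functoriality of $\Omega$, so you never need to invoke the loop-suspension universal property by name; the trade-off is that your degree correction is done by composing with the inverse of a localized self-equivalence of $\Omega S^{2n-1}$ rather than with a single degree map of $S^{2n-2}$. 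Both versions are sound, and the argument that inverting the degree $M$ makes $g\circ\beta$ (hence $\Omega(g\circ\beta)$) an equivalence is standard; your remark about $S^{2n-1}$ being odd is not actually needed, since the degree/Whitehead argument works for any simply connected sphere.
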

\begin{proof}
Let $E: S^{2n-2}\stackrel{}{\larrow} \Omega S^{2n-1}$ be the suspension map, that is, the inclusion of the bottom cell. It represents a generator of $\pi_{2n-2}(\Omega S^{2n-1})\cong \mathbb{Z}$. Further, by the universal property of loop suspension, the $H$-extension $\Omega S^{2n-1} \larrow \Omega S^{2n-1}$ of $E$ is homotopic to the identity map. 

Since $f$ is a right homotopy inverse of $\Omega g_\mathbb{Q}$, the composite $S^{2n-2}_\mathbb{Q}\stackrel{E_\mathbb{Q}}{\larrow} \Omega S^{2n-1}_\mathbb{Q} \stackrel{f}{\larrow} \Omega X_\mathbb{Q}\stackrel{\Omega g_\mathbb{Q}}{\larrow} \Omega S^{2n-1}_{\mathbb{Q}}$ is homotopic to $E_\mathbb{Q}$. 
Then the composite $f\circ E_\mathbb{Q}$ generates a $\mathbb{Q}$-summand of $\pi_{2n-2}(\Omega X)\otimes \mathbb{Q}$. It follows that there is a map $h'': S^{2n-2}\stackrel{}{\larrow} \Omega X$ generating a $\mathbb{Z}$-summand of $\pi_{2n-2}(\Omega X)$ with $[f\circ E_\mathbb{Q}]=x[h''_\mathbb{Q}]$ for some $x=\frac{a}{b}\in \mathbb{Q}$. 
Consider the composite $S^{2n-2}\stackrel{h''}{\larrow} \Omega X\stackrel{\Omega g}{\larrow} \Omega S^{2n-1}$. Since $[\Omega g_\mathbb{Q}\circ h''_\mathbb{Q}]=\frac{b}{a}[\Omega g_\mathbb{Q}\circ f\circ E_\mathbb{Q}]=\frac{b}{a}[E_\mathbb{Q}]$, we see that $[\Omega g\circ h'']=\pm b[E]\in \pi_{2n-2}(\Omega S^{2n-1})\cong \mathbb{Z}\{[E]\}$. 

Localizing away from the primes that divide $b$, there is a degree $\pm\frac{ 1}{b}$ self map $\nu_p$ of $S^{2n-1}$. Composing it with $\Omega g\circ h''$, we see that the composite $S^{2n-2}\stackrel{h''\circ \nu_p}{\larrow} \Omega X\stackrel{\Omega g}{\larrow} \Omega S^{2n-1}$ is homotopic to the suspension map $E$. Let $\Omega S^{2n-1}\stackrel{f'}{\larrow} \Omega X$ be the $H$-extension of $h''\circ \nu_p$. The naturality of the $H$-extension implies that the composite $\Omega S^{2n-1}\stackrel{f'}{\larrow} \Omega X\stackrel{\Omega g}{\larrow} \Omega S^{2n-1}$ is the $H$-extension of $\Omega g\circ h''\circ \nu_p\simeq E$, and hence is homotopic to the identity map. 
\end{proof}

Combing Lemmas \ref{Sinverselemma} and \ref{loopSinverselemma}, we can prove the following lemma. 

\begin{lemma}\label{S+loopSinverselemma}
Let $\Omega X\stackrel{\varphi}{\larrow} Z$ be a map of simply connected $CW$-complexes, where $Z=\mathop{\prod}\limits_{i\in\mathcal{I}} S^{2m_{i}-1}\times\mathop{\prod}\limits_{j\in\mathcal{J}} \Omega S^{2n_{j}-1}$
   for finite index sets $\mathcal{I}$ and $\mathcal{J}$. Suppose that $\varphi$ is a loop map on each $\Omega S^{2n_{j}-1}$-factor. 
If $\varphi_\mathbb{Q}$ has a right homotopy inverse, then $\varphi$ has a right homotopy inverse after localization away from finitely many primes. 
\end{lemma}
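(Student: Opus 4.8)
The plan is to reduce the general target $Z$ to its individual factors, handle each factor with the two preceding lemmas, and then reassemble the local right homotopy inverse. First I would project $\varphi$ onto each factor of $Z$: for $i \in \mathcal{I}$ let $\varphi_i \colon \Omega X \to S^{2m_i-1}$ be the composite of $\varphi$ with the projection to the $i$-th sphere factor, and for $j \in \mathcal{J}$ let $\varphi_j \colon \Omega X \to \Omega S^{2n_j-1}$ be the composite with the projection to the $j$-th loop-sphere factor; by hypothesis each $\varphi_j$ is a loop map. Since $\varphi_{\mathbb{Q}}$ has a right homotopy inverse and rationalization commutes with finite products, the rationalized projections $(\varphi_i)_{\mathbb{Q}}$ and $(\varphi_j)_{\mathbb{Q}}$ each inherit a right homotopy inverse (namely the composite of the right inverse of $\varphi_{\mathbb{Q}}$ with the inclusion of the corresponding rational factor). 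Applying Lemma \ref{Sinverselemma} to each $\varphi_i$ and Lemma \ref{loopSinverselemma} to each $\varphi_j$, we obtain maps $f_i' \colon S^{2m_i-1} \to \Omega X$ and $f_j' \colon \Omega S^{2n_j-1} \to \Omega X$, each a right homotopy inverse of $\varphi_i$ (resp. $\varphi_j$) after inverting a finite set of primes depending on that factor.

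Next I would take the union $P$ of all these finite sets of primes over $i \in \mathcal{I}$ and $j \in \mathcal{J}$; since both index sets are finite, $P$ is finite. Localizing away from $P$, every $f_i'$ and $f_j'$ is simultaneously a right homotopy inverse of the corresponding projected map. The natural candidate for a right homotopy inverse of $\varphi$ is the product map
\[
f' \colon Z = \prod_{i\in\mathcal{I}} S^{2m_i-1} \times \prod_{j\in\mathcal{J}} \Omega S^{2n_j-1} \xrightarrow{\ \prod f_i' \times \prod f_j'\ } \prod \Omega X \xrightarrow{\ \text{mult}\ } \Omega X,
\]
using the loop multiplication on $\Omega X$ to fold the product of copies of $\Omega X$ into $\Omega X$. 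One then checks that $\varphi \circ f'$ is homotopic to the identity of $Z$ after localization away from $P$.

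The main obstacle is precisely this last verification: knowing that $\varphi \circ f'$ restricts correctly on each factor does not immediately give that the whole composite is the identity, because $f'$ was built using loop multiplication and the factors of $Z$ do not a priori interact coherently. The clean way around this is to work one factor at a time and use that a finite product of homotopy equivalences is a homotopy equivalence: more precisely, set up an induction on the number of factors, at each stage splitting off one factor of $Z$ via the corresponding $f_i'$ or $f_j'$ — using for the $\Omega S^{2n_j-1}$-factors that $f_j'$ is a loop map (from Lemma \ref{loopSinverselemma}, its construction as an $H$-extension) so that the splitting is compatible with the $H$-structure, exactly as in Remark \ref{McWremark}. This yields, after inverting $P$, a homotopy equivalence $\Omega X \simeq Z \times W$ for some $W$ under which $\varphi$ becomes the projection onto $Z$, and the inclusion of $Z$ is then the desired right homotopy inverse. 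The bookkeeping of which primes are inverted is routine; the only subtlety worth spelling out is the loop-map compatibility needed to peel off the $\Omega S^{2n_j-1}$-factors without disturbing the sphere factors already handled.
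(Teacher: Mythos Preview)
Your first two paragraphs match the paper's proof exactly: project $\varphi$ onto each factor, apply Lemma~\ref{Sinverselemma} to the sphere factors and Lemma~\ref{loopSinverselemma} to the loop-sphere factors to obtain individual right inverses after inverting a finite set of primes, and then multiply them together using the loop multiplication on $\Omega X$ to obtain a single candidate $t\colon Z\to\Omega X$ (your $f'$).

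Where you diverge is in the final verification. You correctly flag that $\varphi\circ t\simeq\mathrm{id}_Z$ is not automatic, and you propose to resolve it by inductively splitting $\Omega X\simeq Z\times W$, peeling off one factor of $Z$ at a time. The paper does something more direct: it never claims $\varphi\circ t$ is the identity. Instead it notes that $\pi_k\circ(\varphi\circ t)\circ\iota_k\simeq\mathrm{id}_{Z_k}$ for every factor $Z_k$, argues from this that $\varphi\circ t$ induces an isomorphism on homology, and then invokes Whitehead's theorem to conclude that $\varphi\circ t$ is a homotopy equivalence. The desired right inverse of $\varphi$ is then $t$ composed with a homotopy inverse of $\varphi\circ t$. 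No decomposition of $\Omega X$ is ever needed.

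Your inductive splitting also has a genuine gap. The mechanism you invoke from Remark~\ref{McWremark}---delooping the projection to get a fibration $F\to X\to S^{2n_j-1}$ and splitting it after looping---is available only for the $\Omega S^{2n_j-1}$-factors, because only there is the projection $\pi_j\circ\varphi$ assumed to be a loop map. For a sphere factor, the projection $\varphi_i\colon\Omega X\to S^{2m_i-1}$ is neither a loop map nor even an $H$-map, and a section $f_i'$ alone does not yield a product decomposition $\Omega X\simeq S^{2m_i-1}\times W_i$: the standard map $(s,w)\mapsto f_i'(s)\cdot w$ only covers the projection when $\varphi_i$ respects the multiplication. (Your remark that $f_j'$ is a loop map is also not quite right---Lemma~\ref{loopSinverselemma} produces it as an $H$-extension, hence an $H$-map, which is weaker.) So the induction cannot peel off the sphere factors as described, and the claimed equivalence $\Omega X\simeq Z\times W$ with $\varphi$ the projection is not established. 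The paper's Whitehead-theorem route sidesteps this entirely.
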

\begin{proof}
Let $S^{2m_{i}-1}\stackrel{\iota}{\larrow}  \mathop{\prod}\limits_{i\in\mathcal{I}} S^{2m_{i}-1}\times\mathop{\prod}\limits_{j\in\mathcal{J}} \Omega S^{2n_{j}-1}$ and $\mathop{\prod}\limits_{i\in\mathcal{I}} S^{2m_{i}-1}\times\mathop{\prod}\limits_{j\in\mathcal{J}} \Omega S^{2n_{j}-1}\stackrel{\pi}{\larrow} S^{2m_{i}-1}$ be the inclusion and projection of the $S^{2m_{i}-1}$-factor respectively. The composite of rational maps
\[
S^{2m_{i}-1}_\mathbb{Q}\stackrel{s\circ  \iota_\mathbb{Q}}{\lllarrow}\Omega X_\mathbb{Q}\stackrel{(\pi\circ \varphi)_\mathbb{Q}}{\lllarrow}S^{2m_{i}-1}_\mathbb{Q}
\]
is homotopic to the identity map, where $Z_\mathbb{Q}\stackrel{s}{\larrow}\Omega X_\mathbb{Q}$ is a right homotopy inverse of $\varphi_\mathbb{Q}$. By Lemma \ref{Sinverselemma}, there is a map $S^{2m_i-1}\stackrel{f_i}{\larrow} \Omega X$ which is a right homotopy inverse of $\pi\circ \varphi$ after localization away from a finite set $P_1$ of primes.

Similarly, let $\Omega S^{2n_{j}-1}\stackrel{\jmath}{\larrow}  \mathop{\prod}\limits_{i\in\mathcal{I}} S^{2m_{i}-1}\times\mathop{\prod}\limits_{j\in\mathcal{J}} \Omega S^{2n_{j}-1}$ and $\mathop{\prod}\limits_{i\in\mathcal{I}} S^{2m_{i}-1}\times\mathop{\prod}\limits_{j\in\mathcal{J}} \Omega S^{2n_{j}-1}\stackrel{\pi}{\larrow} \Omega S^{2n_{j}-1}$ be the inclusion and projection of the $\Omega S^{2n_{j}-1}$-factor respectively. The composite of rational maps
\[
\Omega S^{2n_{j}-1}_\mathbb{Q}\stackrel{s\circ \jmath_\mathbb{Q}}{\lllarrow}\Omega X_\mathbb{Q}\stackrel{(\pi \circ \varphi)_\mathbb{Q}}{\lllarrow} \Omega S^{2n_{j}-1}_\mathbb{Q}
\]
is homotopic to the identity map. By assumption $\pi \circ \varphi$ is a loop map. 
By Lemma \ref{loopSinverselemma}, there is a map $\Omega S^{2n_j-1}\stackrel{g_j}{\larrow} \Omega X$ which is a right homotopy inverse of $\pi\circ \varphi$ after localization away from a finite set $P_2$ of primes with $P_1\subseteq P_2$.

Localize away from $P_2$. 
Using the loop structure of $\Omega X$, we can multiply the maps $f_i$ and $g_j$ together to obtain a map 
\[
t: \mathop{\prod}\limits_{i\in\mathcal{I}} S^{2m_{i}-1}\times\mathop{\prod}\limits_{j\in\mathcal{J}} \Omega S^{2n_{j}-1} \stackrel{\mathop{\prod}\limits_{i\in\mathcal{I}} f_i\times\mathop{\prod}\limits_{j\in\mathcal{J}} g_j}{\lllarrow}  \mathop{\prod}\limits_{i\in\mathcal{I}} \Omega X \times\mathop{\prod}\limits_{j\in\mathcal{J}} \Omega X
\stackrel{}{\larrow}\Omega X.
\]
 Consider the composite 
\[
\mathop{\prod}\limits_{i\in\mathcal{I}} S^{2m_{i}-1}\times\mathop{\prod}\limits_{j\in\mathcal{J}} \Omega S^{2n_{j}-1}  \stackrel{t}{\larrow} \Omega X\stackrel{\varphi}{\larrow} \mathop{\prod}\limits_{i\in\mathcal{I}} S^{2m_{i}-1}\times\mathop{\prod}\limits_{j\in\mathcal{J}} \Omega S^{2n_{j}-1}.
\]
By the previous argument, it is homotopic to the identity map on each $S^{2m_{i}-1}$- and $\Omega S^{2n_{j}-1}$-factor, and then induces an isomorphism of homology on each factor. It follows that the composite $\varphi\circ t$ induces an isomorphism on homology, and then is a homotopy equivalence by the Whitehead theorem. Therefore, $\varphi$ has a right homotopy inverse.
\end{proof}

Using McGibbon-Wilkerson's theorem, we can prove Proposition \ref{Qpinverse-prop} from Lemma \ref{S+loopSinverselemma}.

\begin{proposition}\label{Qpinverse-prop}
Let $X\stackrel{\varphi}{\larrow} Y$ be a map of simply connected $CW$-complexes. Suppose that $Y$ is finite and rationally elliptic. 
If $\Omega \varphi_\mathbb{Q}$ has a right homotopy inverse, then $\Omega\varphi$ has a right homotopy inverse after localization away from finitely many primes. 
\end{proposition}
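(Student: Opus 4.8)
\textbf{Proof proposal for Proposition \ref{Qpinverse-prop}.}
The plan is to reduce the statement to Lemma \ref{S+loopSinverselemma} by applying McGibbon--Wilkerson's theorem to $Y$. First I would invoke Theorem \ref{McW}: since $Y$ is simply connected, finite and rationally elliptic, there is a finite set $P_0$ of primes such that after localization away from $P_0$ there is a homotopy equivalence
\[
\theta: \Omega Y\simeq Z:=\mathop{\prod}\limits_{i\in\mathcal{I}} S^{2m_{i}-1}\times\mathop{\prod}\limits_{j\in\mathcal{J}} \Omega S^{2n_{j}-1}
\]
for finite index sets $\mathcal{I}$ and $\mathcal{J}$. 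By Remark \ref{McWremark}, this equivalence can be arranged so that the projection of $\theta$ onto each $\Omega S^{2n_j-1}$-factor is a loop map; more precisely each $\Omega S^{2n_j-1}$-factor is split off via the loop of a map $b_j: Y\larrow S^{2n_j-1}$ admitting a right homotopy inverse away from finitely many primes. I would fold the finite set of primes arising in Remark \ref{McWremark} into $P_0$ as well, so that after localization away from $P_0$ the composite $\psi:=\theta\circ\Omega\varphi: \Omega X\larrow Z$ is a loop map on each $\Omega S^{2n_j-1}$-factor, namely $\pi_j\circ\psi\simeq \Omega(b_j\circ\varphi)$ where $\pi_j$ is the projection to the $\Omega S^{2n_j-1}$-factor.

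Next I would promote the hypothesis to the rationalization of $\psi$. Since rationalization commutes with localization away from finitely many primes and $\theta$ is an equivalence, $\psi_\mathbb{Q}\simeq\theta_\mathbb{Q}\circ\Omega\varphi_\mathbb{Q}$, and $\Omega\varphi_\mathbb{Q}$ has a right homotopy inverse by hypothesis, so $\psi_\mathbb{Q}$ has a right homotopy inverse given by $(\Omega\varphi_\mathbb{Q})\text{'s inverse}\circ\theta_\mathbb{Q}^{-1}$. Now $\psi: \Omega X\larrow Z$ is precisely a map of the form treated in Lemma \ref{S+loopSinverselemma}: the target is a finite product of odd spheres and loops on odd spheres, and $\psi$ is a loop map on each $\Omega S^{2n_j-1}$-factor. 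Applying that lemma gives a finite set $P_1\supseteq P_0$ of primes such that, after localization away from $P_1$, the map $\psi$ has a right homotopy inverse $\sigma: Z\larrow\Omega X$.

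Finally I would transport this back to $\Omega\varphi$. After localization away from $P_1$, the composite $Z\stackrel{\sigma}{\larrow}\Omega X\stackrel{\Omega\varphi}{\larrow}\Omega Y\stackrel{\theta}{\larrow}Z$ is homotopic to the identity, so $\theta^{-1}\circ(\text{identity})$ shows that $\sigma\circ\theta: \Omega Y\larrow\Omega X$ is a right homotopy inverse of $\Omega\varphi$ after localization away from $P_1$; equivalently $\Omega\varphi\circ(\sigma\circ\theta)\simeq\theta^{-1}\circ\psi\circ\sigma\circ\theta\simeq\theta^{-1}\circ\theta\simeq\mathrm{id}$. This completes the proof, with the finite set of inverted primes being $P_1$.

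\textbf{Main obstacle.} The substantive point is not the formal manipulation above but ensuring the loop-map hypothesis of Lemma \ref{S+loopSinverselemma} is genuinely satisfied, i.e.\ that the McGibbon--Wilkerson equivalence can be chosen compatibly with the loop structure on the $\Omega S^{2n_j-1}$-factors. This is exactly what Remark \ref{McWremark} provides, by realizing each such factor through a fibration $F\larrow Y\stackrel{b_j}{\larrow} S^{2n_j-1}$ with $b_j$ right-invertible away from finitely many primes; one must check that iterating this over the finitely many factors in $\mathcal{J}$ keeps the number of inverted primes finite (it does, being a finite union) and that the resulting splitting of $\Omega Y$ is simultaneously a splitting as $H$-spaces on those factors. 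The only other mild care needed is bookkeeping of the several finite prime sets ($P_0$ from Theorem \ref{McW} and Remark \ref{McWremark}, then $P_1$ from Lemma \ref{S+loopSinverselemma}, plus whatever primes Lemma \ref{Sinverselemma} and Lemma \ref{loopSinverselemma} invert internally), all of which are finite and hence harmless after taking their union.
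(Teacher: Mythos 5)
Your proposal is correct and follows essentially the same argument as the paper: invoke Theorem \ref{McW} together with Remark \ref{McWremark} to obtain a loop-compatible equivalence $\theta:\Omega Y\simeq Z$ away from a finite set of primes, observe that $\theta\circ\Omega\varphi$ satisfies the hypotheses of Lemma \ref{S+loopSinverselemma} (including rationally admitting a right inverse), and then transport the resulting local right inverse back to $\Omega\varphi$ via $\theta$. The paper's proof is identical in structure, just phrased with $\Phi$ in place of $\theta$ and written more compactly.
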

\begin{proof}
Let $s: \Omega Y_\mathbb{Q}\stackrel{}{\larrow}\Omega X_\mathbb{Q}$ be a right homotopy inverse of $\Omega  \varphi_\mathbb{Q}: \Omega X_\mathbb{Q} \stackrel{}{\larrow} \Omega Y_\mathbb{Q}$. 
For the finite rationally elliptic complex $Y$, Theorem \ref{McW} implies that after localization away from a finite set $P_1$ of primes, there is a homotopy equivalence 
   \[\Phi: \Omega Y\stackrel{\simeq}{\larrow}\mathop{\prod}\limits_{i\in\mathcal{I}} S^{2m_{i}-1}\times\mathop{\prod}\limits_{j\in\mathcal{J}} \Omega S^{2n_{j}-1}\] 
   for finite index sets $\mathcal{I}$ and $\mathcal{J}$. Denote by $\Phi^{-1}$ a homotopy inverse of $\Phi$. By Remark \ref{McWremark}, we can suppose that both $\Phi$ and $\Phi^{-1}$ are loop maps on each $\Omega S^{2n_{j}-1}$-factor. Then so is $\Phi\circ \Omega \varphi$. 
Since the composite of rational maps 
\[
\big(\mathop{\prod}\limits_{i\in\mathcal{I}} S^{2m_{i}-1}\times\mathop{\prod}\limits_{j\in\mathcal{J}} \Omega S^{2n_{j}-1}\big)_\mathbb{Q}  \stackrel{\Phi^{-1}_\mathbb{Q}}{\larrow}
\Omega Y_\mathbb{Q}\stackrel{s}{\larrow} \Omega X_\mathbb{Q}\stackrel{\Omega \varphi_\mathbb{Q}}{\larrow} \Omega Y_\mathbb{Q}
\stackrel{\Phi_\mathbb{Q}}{\larrow}(\mathop{\prod}\limits_{i\in\mathcal{I}} S^{2m_{i}-1}\times\mathop{\prod}\limits_{j\in\mathcal{J}} \Omega S^{2n_{j}-1})_\mathbb{Q}
\]
is homotopic to the identity map, Lemma \ref{S+loopSinverselemma} implies that, after localization away from a finite set $P_2$ of primes with $P_1\subseteq P_2$, the composite $\Phi\circ \Omega \varphi$ has a right homotopy inverse, and then so does $\Omega \varphi$. 
\end{proof}

Proposition \ref{Qpinverse-prop} immediately implies the following corollary.

\begin{corollary}\label{Qpinert-coro}
Let $A\stackrel{h}{\larrow}X\stackrel{\varphi}{\larrow} Y$ be a homotopy cofibration of simply connected $CW$-complexes. Suppose that $Y$ is finite and rationally elliptic. 
If $h$ is rationally inert, then $h$ is inert after localization away from finitely many primes.  ~$\qqed$ 
\end{corollary}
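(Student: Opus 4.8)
The plan is to deduce the corollary directly from Proposition \ref{Qpinverse-prop}, since the two statements are merely reformulations of one another through the definition of inertness. No new ingredient is needed.

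First I would unwind the terminology. By definition, saying that $h$ is \emph{inert after localization away from a finite set $P$ of primes} means exactly that the looped map $\Omega\varphi\colon\Omega X\to\Omega Y$, localized away from $P$, admits a right homotopy inverse; likewise, $h$ being \emph{rationally inert} means that the rationalization $\Omega\varphi_{\mathbb{Q}}\colon\Omega X_{\mathbb{Q}}\to\Omega Y_{\mathbb{Q}}$ admits a right homotopy inverse. Thus the corollary is precisely the assertion: if $\Omega\varphi_{\mathbb{Q}}$ has a right homotopy inverse, then so does $\Omega\varphi$ after inverting finitely many primes.

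Next I would verify that the hypotheses of Proposition \ref{Qpinverse-prop} are in force for the map $\varphi\colon X\to Y$. Both $X$ and $Y$ are simply connected $CW$-complexes by assumption, $Y$ is finite, and $Y$ is rationally elliptic --- precisely the standing hypotheses of that proposition. Its conclusion is that $\Omega\varphi$ has a right homotopy inverse after localization away from finitely many primes, which, rephrased via the definition above, says exactly that $h$ is inert away from finitely many primes. This completes the argument.

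There is no genuine obstacle at the level of the corollary itself: all of the real work has already been carried out in Proposition \ref{Qpinverse-prop} and the lemmas preceding it --- namely, rewriting $\Omega Y$, after inverting finitely many primes, as a finite product of odd spheres and loop spaces on odd spheres via McGibbon--Wilkerson (Theorem \ref{McW} and Remark \ref{McWremark}), and then promoting the rational right homotopy inverse to an integral one away from finitely many primes using the degree and $H$-extension arguments of Lemmas \ref{Sinverselemma}, \ref{loopSinverselemma} and \ref{S+loopSinverselemma}. The only point worth emphasizing is that the finiteness and rational ellipticity of $Y$ enter solely through Proposition \ref{Qpinverse-prop}, and that no additional conditions on $A$, on $X$, or on the attaching map are required.
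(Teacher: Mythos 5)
Your proposal is correct and matches the paper exactly: the paper treats Corollary \ref{Qpinert-coro} as an immediate consequence of Proposition \ref{Qpinverse-prop} (hence the $\Box$ with no written proof), and you have simply made that immediacy explicit by unwinding the definition of inertness as the existence of a right homotopy inverse of $\Omega\varphi$.
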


\subsection{An elementary case}
We can now prove Theorem \ref{elliptic-hyper-thm-intro}, which can be applied to the context of Poincar\'{e} Duality complexes in the sequel. 

\begin{proof}[Proof of Theorem \ref{elliptic-hyper-thm-intro}]
Since the finite complex $Y$ is rationally elliptic and $h$ is rationally inert, Corollary \ref{Qpinert-coro} implies that $h$ is inert after localization away from a finite set $P_1$ of primes. By Theorem \ref{GTcofib}, there is a homotopy equivalence
\[
\Omega X\simeq\Omega Y\times\Omega((\Sigma\Omega Y\wedge A)\vee\Sigma A)
\]
after localization away from $P_1$. 

For the finite rationally elliptic complex $Y$, Theorem \ref{McW} implies that after localization away from a finite set $P_2$ of primes, there is a homotopy equivalence 
   \[\Omega Y\simeq\mathop{\prod}\limits_{i\in\mathcal{I}} S^{2m_{i}-1}\times\mathop{\prod}\limits_{j\in\mathcal{J}} \Omega S^{2n_{j}-1}\] 
   for finite index sets $\mathcal{I}$ and $\mathcal{J}$. Since $Y$ is not rationally contractible, the index set $\mathcal{I}\coprod \mathcal{J}$ is non-empty. 
After localization away from $P_2$, the James suspension splitting theorem implies that $\Sigma\Omega Y$ is homotopy equivalent to a wedge of simply connected spheres. Choose any wedge summand $S^t$ of $\Sigma \Omega Y$. Then $S^t$ retracts off $\Sigma\Omega Y$ after localization away from $P_2$.

Localize away from $\calP(A)\cup P_1\cup P_2$. Since $\Sigma A$ is finite, Proposition \ref{Xwedge-prop1} implies that $\Sigma A$ is homotopy equivalent to a wedge of spheres. As $\Sigma A$ is not rational contractible, there is a simply connected sphere $S^a$ retracting off $\Sigma A$, and then $\Sigma^a \Omega Y$ retracts off $\Sigma\Omega Y\wedge A$. By the previous argument $S^t$ retracts off $\Sigma\Omega Y$. It follows that $S^b$ retracts off $\Sigma\Omega Y\wedge A$ with $b=a+t-1$. Therefore, $S^a \vee S^b$ retracts off $(\Sigma\Omega Y\wedge A)\vee \Sigma A$, and then $\Omega (S^a \vee S^b)$ retracts off $\Omega X$ by the above loop space decomposition of $\Omega X$. By Lemma \ref{Guylemma} the theorem follows. 
\end{proof}

\section{Elliptic homotopy cofibres II: Poincar\'{e} Duality complexes}
\label{sec: ellipticII}
In this section, we apply Theorem \ref{elliptic-hyper-thm-intro} to the context of Poincar\'{e} Duality complexes and prove Theorems \ref{Melliptic-hyper-thm-intro}, \ref{punct-thm} and \ref{NSthm}. For the homotopy cofibrations in their statements, the homotopy cofibres is assumed to be rationally elliptic. 
From now on, every simply connected $CW$-complex is assumed to be equipped with its minimal cell structure.

\subsection{Rational Poincar\'{e} Duality complexes}

Let $M$ be a simply connected finite $CW$-complex. If $M$ is rationally elliptic, a classical result of Halperin \cite{Hal77} shows that $M$ is a {\it rational Poincar\'{e} Duality complex}, that is, its rational cohomology $H^\ast(M;\mathbb{Q})$ satisfies Poincar\'{e} duality. In this case, there is a {\it rational fundamental class} $[M]\in H_m(M;\mathbb{Q})\cong \mathbb{Q}$ in the top nontrivial rational homology of $M$, and the integer $m$ is referred to as the {\it formal dimension} of $M$.

Suppose further that $M$ has dimension $m$, which coincides with its formal dimension. There is a unique $m$-cell of $M$ representing the rational fundamental class $[M]$. We may call it the {\it formal top cell} of $M$. Then there is a homotopy cofibration
\begin{equation}\label{formal-cof-eq}
S^{m-1}\stackrel{h}{\larrow}\overline{M}\stackrel{}{\larrow} M,
\end{equation}
where $\overline{M}$ is $M$ with the formal top cell removed, and $h$ is the attaching map for the formal top cell of $M$.

The following proposition is noteworthy in its own right. 
\begin{proposition}\label{Minert-prop}
Let $M$ be a simply connected rationally elliptic finite $CW$-complex whose dimension equals its formal dimension. Suppose that the rational cohomology ring $H^\ast(M;\mathbb{Q})$ is not generated by a single element. Then the attaching map for the formal top cell of $M$ is inert after localization away from finitely many primes.
\end{proposition}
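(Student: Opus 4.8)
The plan is to reduce the statement to Corollary \ref{Qpinert-coro} by showing that the attaching map $h\colon S^{m-1}\to\overline{M}$ for the formal top cell is \emph{rationally} inert, for then — since $\overline{M}$ need not be rationally elliptic, but $M$ is finite and rationally elliptic — we apply Corollary \ref{Qpinert-coro} to the cofibration \eqref{formal-cof-eq} to conclude that $h$ is inert after inverting finitely many primes. Thus the substance is entirely a rational homotopy statement: the attaching map for the formal top cell of a rational Poincar\'{e} Duality complex whose rational cohomology algebra is not monogenic is rationally inert.

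First I would pass to a minimal Sullivan model and recall the Halperin–Lemaire theorem \cite{HL87}: for a Poincar\'{e} Duality complex (in the rational sense) the attaching map of the top cell is rationally inert \emph{unless} the rational cohomology algebra is generated by a single element. The one subtlety is matching conventions: \cite{HL87} works with a genuine Poincar\'{e} Duality complex and its topological top cell, whereas here $M$ has dimension equal to its formal dimension and we have singled out the \emph{formal} top cell — the unique $m$-cell carrying the rational fundamental class $[M]\in H_m(M;\mathbb{Q})$. So the second step is to check that $\overline{M}\hookrightarrow M$, with $\overline{M}$ the complement of the formal top cell, rationally models "$M$ minus its top cell": rationally, $H^\ast(\overline M;\mathbb{Q})$ is $H^\ast(M;\mathbb{Q})$ with the top class killed (the formal top cell is the only cell in degree $m$ contributing to $H_m(\cdot;\mathbb{Q})$, since $\dim M = m$ forces all other $m$-cells to be rationally trivial by Poincar\'{e} duality in the middle range), and the rationalization $h_{\mathbb{Q}}\colon S^{m-1}_{\mathbb{Q}}\to\overline M_{\mathbb{Q}}$ is precisely the attaching map appearing in the Halperin–Lemaire setup. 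With this identification, \cite{HL87} applies verbatim under the hypothesis that $H^\ast(M;\mathbb{Q})$ is not generated by a single element, giving that $\Omega\varphi_{\mathbb{Q}}$ has a right homotopy inverse, i.e. $h$ is rationally inert.

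Finally, having established rational inertness, I would invoke Corollary \ref{Qpinert-coro}: $M$ is finite and rationally elliptic, $h$ is rationally inert, so $h$ is inert after localization away from finitely many primes. This is exactly the assertion of Proposition \ref{Minert-prop}.

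The main obstacle is the bookkeeping in the second step — verifying that the \emph{formal} top cell of $M$ (and hence $\overline M$) really does realize the rational cofibration to which Halperin–Lemaire applies, and in particular that no other cell of $M$ interferes rationally with the top-degree cohomology. Once one is careful that $\dim M = m$ equals the formal dimension (so there is a \emph{unique} $m$-cell representing $[M]$ and the rest of $M$ in degrees $<m$ already satisfies rational Poincar\'{e} duality by Halperin's theorem \cite{Hal77}), this is routine; everything else is a direct citation of \cite{HL87} and Corollary \ref{Qpinert-coro}.
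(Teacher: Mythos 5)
Your proposal is correct and follows essentially the same route as the paper: cite Halperin--Lemaire \cite{HL87} to obtain rational inertness of the attaching map for the formal top cell, then upgrade to local inertness away from finitely many primes via Corollary \ref{Qpinert-coro}. The extra care you take in verifying that the formal top cell realizes the cofibration to which \cite{HL87} applies is a legitimate subtlety that the paper's terse proof leaves implicit, but the argument is the same.
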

\begin{proof}
Consider the homotopy cofibration \eqref{formal-cof-eq}. 
By \cite{HL87}, the map $h$ is rationally inert as $H^\ast(M;\mathbb{Q})$ is not generated by a single element. Since $M$ is finite and rationally elliptic, Corollary \ref{Qpinert-coro} implies that $h$ is inert after localization away from finitely many primes.
\end{proof}

\begin{theorem}\label{Melliptic-hyper-thm}
Let $M$ be a simply connected rationally elliptic finite $CW$-complex whose dimension equals its formal dimension. Suppose that the rational cohomology ring $H^\ast(M;\mathbb{Q})$ is not generated by a single element. 
Then $\overline{M}$ is rationally hyperbolic and $\mathbb{Z}/p^r$-hyperbolic for almost all primes $p$ and all $r\geq 1$.

In particular, Moore's conjecture holds for $\overline{M}$ for all but finitely many primes $p$.
\end{theorem}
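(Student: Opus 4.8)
The plan is to obtain Theorem~\ref{Melliptic-hyper-thm} as a direct application of Theorem~\ref{elliptic-hyper-thm-intro} to the homotopy cofibration~\eqref{formal-cof-eq}. First I would set up the input data. Since $M$ is simply connected and $H^\ast(M;\mathbb{Q})$ is not generated by a single element, $M$ cannot be a rational sphere, so its formal dimension satisfies $m\geq 4$; consequently $S^{m-1}=\Sigma S^{m-2}$ is a simply connected finite $CW$-complex, and $\overline{M}$, obtained from $M$ by deleting the formal top cell, is also simply connected and finite. Writing $S^{m-1}=\Sigma S^{m-2}$ puts~\eqref{formal-cof-eq} into the shape $\Sigma A\stackrel{h}{\larrow}\overline{M}\larrow M$ with $A=S^{m-2}$, which is the form required by Theorem~\ref{elliptic-hyper-thm-intro}.

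Next I would verify the remaining hypotheses of Theorem~\ref{elliptic-hyper-thm-intro}. The domain $S^{m-1}$ is not rationally contractible since $m-1\geq 1$, and $M$ is not rationally contractible because the hypothesis forces $H^\ast(M;\mathbb{Q})\neq\mathbb{Q}$. For rational inertness of $h$: by Halperin's theorem~\cite{Hal77} the rationally elliptic complex $M$ is a rational Poincar\'e Duality complex, and since its dimension equals its formal dimension the formal top cell is a genuine top cell, so $h$ is precisely the attaching map for the top cell of $M$; the Halperin--Lemaire theorem~\cite{HL87} then gives that $h$ is rationally inert, exactly because $H^\ast(M;\mathbb{Q})$ is not generated by a single element. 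This is the argument already isolated in Proposition~\ref{Minert-prop}. Finally, $Y=M$ is rationally elliptic by hypothesis.

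With all hypotheses in place, Theorem~\ref{elliptic-hyper-thm-intro} immediately yields that $\overline{M}$ is rationally hyperbolic and $\mathbb{Z}/p^r$-hyperbolic for almost all primes $p$ and all $r\geq 1$, and that Moore's conjecture holds for $\overline{M}$ for all but finitely many primes. Since the proof is a reduction to earlier results, there is no serious obstacle; the only points needing care are confirming that the ``formal top cell'' in~\eqref{formal-cof-eq} is a genuine top cell (so that \cite{HL87} applies as stated) and that the non-generation condition is exactly the hypothesis excluded by Halperin--Lemaire. One could instead bypass Theorem~\ref{elliptic-hyper-thm-intro} altogether and run the argument by hand: use Proposition~\ref{Minert-prop} (equivalently Corollary~\ref{Qpinert-coro}) together with the Beben--Theriault splitting (Theorem~\ref{GTcofib}), McGibbon--Wilkerson's theorem~\ref{McW}, and the identification $\Sigma S^{m-2}\simeq S^{m-1}$ to retract a loop wedge $\Omega(S^a\vee S^b)$ off $\Omega\overline{M}$ after inverting finitely many primes, and then apply Lemma~\ref{Guylemma}; but invoking Theorem~\ref{elliptic-hyper-thm-intro} is cleaner.
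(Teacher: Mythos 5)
Your proposal is correct and follows essentially the same route as the paper: reduce to Theorem~\ref{elliptic-hyper-thm-intro} via the cofibration~\eqref{formal-cof-eq}, using Halperin--Lemaire~\cite{HL87} (as packaged in Proposition~\ref{Minert-prop}) for the inertness hypothesis and the given ellipticity of $M$ for the remaining hypothesis. The only cosmetic difference is that you apply \cite{HL87} directly to get rational inertness while the paper invokes Proposition~\ref{Minert-prop} (which gives the stronger local inertness) before handing off to Theorem~\ref{elliptic-hyper-thm-intro}; both are fine since Theorem~\ref{elliptic-hyper-thm-intro} only needs rational inertness.
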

\begin{proof}
Consider the homotopy cofibration \eqref{formal-cof-eq}. Since $M$ is rationally elliptic and $H^\ast(M;\mathbb{Q})$ is not generated by a single element, $M$ is not rationally contractible, and Proposition \ref{Minert-prop} implies that $h$ is inert after localization away from finitely many primes. Then the theorem follows immediately from Theorem \ref{elliptic-hyper-thm-intro}. 
\end{proof}

\subsection{Poincar\'{e} Duality complexes}

Theorem \ref{Melliptic-hyper-thm-intro} is a special case of Theorem \ref{Melliptic-hyper-thm}.

\begin{proof}[Proof of Theorem \ref{Melliptic-hyper-thm-intro}]
Since $M$ is a Poincaré duality complex, its dimension coincides with its formal dimension; in particular, its unique top cell is the formal top cell. The homotopy cofibration \eqref{formal-cof-eq} thus corresponds to the attachment of the top cell to the lower skeleton $\overline{M}$. The theorem then follows directly from Theorem \ref{Melliptic-hyper-thm}.
\end{proof}

\begin{remark}\label{Mhyper-remark}
Let $M$ be a simply connected $m$-dimensional Poincar\'{e} Duality complex. Suppose that $M$ is not rationally contractible. By \cite[Corollary 5.3]{HT24}, if the attaching map for the top cell of $M$ is inert after localization away from finitely many primes, then $\overline{M}$ is rationally hyperbolic and $\mathbb{Z}/p^r$-hyperbolic for almost all primes $p$ and all $r\geq 1$. 

Therefore, if we can show that the attaching map for the top cell of a rationally hyperbolic Poincar\'{e} Duality complex is inert at large primes, Theorem \ref{Melliptic-hyper-thm-intro} will hold without the rationally elliptic condition. \end{remark}

\subsection{Punctured manifolds} A {\it punctured manifold} $M\backslash B$ is a manifold $M$ with a finite set $B$ of points removed.  

\begin{proof}[Proof of Theorem \ref{punct-thm}]
Suppose that $B$ is a one-point set. Then $M\backslash B\simeq \overline{M}$, and the theorem in this case follows immediately from Theorem \ref{Melliptic-hyper-thm-intro}. 

Otherwise, $B$ has more than one point. Then $M\backslash B \simeq \overline{M}\vee \mathop{\bigvee}\limits_{\#B-1} S^{m-1}$, where $m$ is the dimension of $M$. If $\#B=2$, the theorem in this case follows from the previous case. If $\#B\geq 3$, the theorem follows from Lemma \ref{Guylemma}. 
\end{proof}

\subsection{Manifolds with rationally spherical boundary}

The following lemma provides a family of examples of rational Poincar\'{e} Duality complexes, which are constructed from manifolds but are not necessarily manifolds.

\begin{lemma}\label{NMPDlemma}
Let $N$ be a simply connected manifold whose boundary is a rational sphere. Then the homotopy cofibre of the inclusion $i: \partial N\stackrel{}{\larrow} N$ of the boundary is a rational Poincar\'{e} Duality complex such that its dimension coincides with its formal dimension.
\end{lemma}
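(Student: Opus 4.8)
The plan is to work with the homotopy cofibration $\partial N \xrightarrow{i} N \xrightarrow{q} C$, where $C$ is the homotopy cofibre, and to show that $H^\ast(C;\mathbb{Q})$ satisfies Poincar\'e duality of formal dimension equal to $\dim N$. First I would set $n = \dim N$ and recall that $N$ is a compact manifold with boundary, so by Lefschetz duality $H^k(N, \partial N;\mathbb{Q}) \cong H_{n-k}(N;\mathbb{Q})$; since $N$ is simply connected and the pair $(N,\partial N)$ is a Poincar\'e--Lefschetz pair, there is a fundamental class in $H_n(N,\partial N;\mathbb{Q})$ restricting on the boundary to the fundamental class of $\partial N$. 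The key point is that $\widetilde H^\ast(C;\mathbb{Q}) \cong H^\ast(N,\partial N;\mathbb{Q})$ by the long exact sequence of the cofibration (equivalently of the pair), because $C \simeq N/\partial N$ up to the homotopy-theoretic subtleties of the mapping cone, and since $\partial N$ is a rational sphere these agree rationally.

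Next I would use the hypothesis that $\partial N$ is a rational sphere, say a rational $(n-1)$-sphere since it is the boundary of an $n$-manifold and is simply connected (so $n-1 \geq 2$, hence $n \geq 3$; the case of low dimensions can be handled separately or is vacuous). Then the long exact cohomology sequence of the pair $(N,\partial N)$ breaks into the piece $0 \to H^{n-1}(N,\partial N;\mathbb{Q}) \to H^{n-1}(N;\mathbb{Q}) \to H^{n-1}(\partial N;\mathbb{Q}) \to H^n(N,\partial N;\mathbb{Q}) \to 0$ together with isomorphisms $H^k(N,\partial N;\mathbb{Q}) \cong H^k(N;\mathbb{Q})$ for $k \leq n-2$. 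The only subtlety is the behaviour near the top degree, where the rational fundamental class of $\partial N$ interacts with the top class of $C$; I would verify that $\widetilde H^n(C;\mathbb{Q}) \cong \mathbb{Q}$, giving the rational fundamental class $[C]$, and that the cup product pairing $H^k(C;\mathbb{Q}) \otimes H^{n-k}(C;\mathbb{Q}) \to H^n(C;\mathbb{Q}) \cong \mathbb{Q}$ is nondegenerate.

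For the nondegeneracy I would argue directly from Lefschetz duality: the pairing on $H^\ast(C;\mathbb{Q}) \cong H^\ast(N,\partial N;\mathbb{Q}) \oplus \mathbb{Q}\langle 1\rangle$ in middle degrees is identified with the Lefschetz intersection pairing $H^k(N,\partial N;\mathbb{Q}) \otimes H^{n-k}(N;\mathbb{Q}) \to \mathbb{Q}$ composed with the map $H^{n-k}(N;\mathbb{Q}) \to H^{n-k}(N,\partial N;\mathbb{Q})$; since $\partial N$ is rationally a sphere, this last map is an isomorphism in the relevant degrees $1 \leq n-k \leq n-1$ except possibly $n-k = n-1$, and the rank count from the long exact sequence forces nondegeneracy there as well. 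Concretely, I expect the cohomology of $C$ to coincide rationally with that of the closed manifold obtained by capping off $\partial N$ with a disc (when $\partial N$ is genuinely a sphere), and the general rational-sphere case is a formal consequence of the same bookkeeping.

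The main obstacle I anticipate is handling the top two degrees carefully: one must check that the connecting homomorphism $H^{n-1}(\partial N;\mathbb{Q}) \to H^n(N,\partial N;\mathbb{Q})$ is an isomorphism (equivalently that $H^{n-1}(N;\mathbb{Q}) \to H^{n-1}(\partial N;\mathbb{Q})$ is zero, which follows from Lefschetz duality since a class in $H^{n-1}(N)$ pulling back nontrivially to the fundamental class of $\partial N$ would contradict the structure of the duality pairing), so that $\widetilde H^n(C;\mathbb{Q})$ is one-dimensional and supplies the fundamental class, and simultaneously that no spurious class survives in degree $n-1$ that would break Poincar\'e duality. Once this degree-counting is pinned down, declaring the formal dimension of $C$ to be $n$ and exhibiting the rational fundamental class completes the proof; the equality of the actual $CW$-dimension of $C$ with $n$ is immediate since $C$ is built from $N$ (of dimension $n-1$ after deformation onto a spine, or $n$) by attaching cells of dimension at most $n$ coming from the cone on $\partial N$.
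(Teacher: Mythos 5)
Your argument is correct and is essentially the same duality computation the paper performs, just phrased with cup products and the Lefschetz intersection pairing rather than with cap products. The paper pushes the relative fundamental class $[N]\in H_m(N,\partial N;\mathbb{Z})$ forward to $[M]\in H_m(M;\mathbb{Z})$ along the relative homeomorphism $j\colon(N,\partial N)\to(M,C(\partial N))$, and reads off that $-\cap[M]$ is a rational isomorphism from naturality of the cap product together with the fact that $j_\ast$ is a rational homology isomorphism in degrees $\leq m-1$ (which it gets from the rational cofibration $S^{m-1}\to N\to M$ plus the vanishing $H_{m}(N;\mathbb{Z})=H_{m-1}(N;\mathbb{Z})=0$). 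Your identification of the pairing on $H^\ast(C;\mathbb{Q})$ with the Lefschetz pairing composed with a restriction map is the dual of the same diagram, and your LES bookkeeping is equivalent to the paper's claim about $j_\ast$.

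Two minor repairs to your writeup. First, the natural map through which the pairing factors runs the other way: the relative cup product $H^k(N,\partial N;\mathbb{Q})\otimes H^{n-k}(N,\partial N;\mathbb{Q})\to H^n(N,\partial N;\mathbb{Q})$ factors through the Lefschetz pairing via the \emph{restriction} $H^{n-k}(N,\partial N;\mathbb{Q})\to H^{n-k}(N;\mathbb{Q})$, so nondegeneracy needs this restriction to be an isomorphism for $1\leq n-k\leq n-1$. Second, at the edge degrees $n-k\in\{1,n-1\}$ you do not need the rank count or the connecting-homomorphism discussion: both groups vanish outright, since $H^1(N;\mathbb{Q})=H^1(N,\partial N;\mathbb{Q})=0$ because $N$ is simply connected and $\partial N$ is connected, while $H^{n-1}(N;\mathbb{Q})\cong H_1(N,\partial N;\mathbb{Q})=0$ and $H^{n-1}(N,\partial N;\mathbb{Q})\cong H_1(N;\mathbb{Q})=0$ by Lefschetz duality. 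This is precisely the observation the paper records as $H_{m}(N;\mathbb{Z})=H_{m-1}(N;\mathbb{Z})=0$.
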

\begin{proof}
Suppose that $N$ is of dimension $m$. 
Let $M$ be the mapping cone of $i: \partial N\stackrel{}{\larrow} N$. It is an $m$-dimensional $CW$-complex. 
The inclusion map of pairs 
\[
j: (N,\partial N)\larrow (M, C(\partial N))
\] 
is a relative homeomorphism, where $C(\partial N)$ is the cone of $\partial N$. Then $j_\ast: H_{m}(N,\partial N)\larrow H_{m}(M, C(\partial N))$ is an isomorphism. Denote by $[M]=j_\ast([N])\in H_{m}(M;\mathbb{Z})\cong H_{m}(M, C(\partial N);\mathbb{Z})\cong  \mathbb{Z}$ the image of the fundamental class $[N]\in H_{m}(N, \partial N)$. The naturality of cap product implies a commutative diagram for (co)homology with arbitrary coefficient 
\[
\diagram
H^k(M, C(\partial N)) \dto^{-\cap [M]}\rto^<<<{j^\ast}_<<<{\cong}  & H^k(N, \partial N) \dto^{-\cap [N]}_{\cong}\\
H_{m-k}(M)    & H_{m-k}(N) \lto_{j_\ast}, 
\enddiagram
\]
where $-\cap [N]$ is an isomorphism by Poincar\'{e} duality. 

By assumption, $N$ is simply connected and $\partial N$ is connected. Then it is easy to see that $H^0(N, \partial N;\mathbb{Z})=0$ and $H^1(N, \partial N;\mathbb{Z})=0$, and hence $H_{m}(N;\mathbb{Z})=0$ and $H_{m-1}(N;\mathbb{Z})=0$ by Poincar\'{e} duality. Since $\partial N$ is a rational homotopy sphere, there is a rational homotopy cofibration
\[
S^{m-1}\stackrel{h}{\larrow} N\stackrel{j}{\larrow} M
\]
for some rational map $h$. From the long exact sequence of homology of a homotopy cofibration, it is straightforward to show that $j_\ast: H_{m-k}(N;\mathbb{Q})\larrow H_{m-k}(M;\mathbb{Q})$ is an isomorphism for any $k\geq 1$. Combing with the preceding cap product diagram, we see that 
\[
-\cap [M]: H^k(M;\mathbb{Q})\cong H^k(M, C (\partial N);\mathbb{Q}) \larrow H_{n-k}(M;\mathbb{Q})
\]
is an isomorphism for any $k\geq 1$. Additionally, it is clear that $-\cap [M]: H^0(M)\larrow H_{m}(M)$ is an isomorphism. Therefore, $M$ is a rational Poincar\'{e} duality complex with the rational fundamental class $[M]$, and its dimension coincides with its formal dimension.
\end{proof}

\begin{proof}[Proof of Theorem \ref{NSthm}]
Suppose that $N$ is of dimension $m$. 
Let $M$ be the homotopy cofibre of the inclusion $i: \partial N\stackrel{}{\larrow} N$ of the boundary. There is a homotopy cofibration
\[
\partial N\stackrel{i}{\larrow} N\stackrel{}{\larrow} M.
\]
 By assumption $\partial N\simeq_{\mathbb{Q}} S^{m-1}$. Then there is a map $f: S^{m-1}\larrow \partial N$ representing a generator of the rational homotopy group $\pi_{m-1}(\partial N)\otimes \mathbb{Q}\cong \mathbb{Q}$. By the Hurewicz theorem $H_\ast(f;\mathbb{Q})$ is an isomorphism. As $\partial N$ is finite and simply connected, this implies after localization away from a finite set $P$ of primes that $f$ induces an isomorphism on homology and hence is a homotopy equivalence by the Whitehead theorem.
 
 Localize away from the finite set $P$. The previous homotopy cofibration can be replaced by a  homotopy cofibration
 \[
 S^{m-1}\stackrel{h}{\larrow} N\stackrel{}{\larrow} M
 \]
 with $h=i\circ f$. Since $\partial N\simeq_{\mathbb{Q}} S^{m-1}$, by Lemma \ref{NMPDlemma} $M$ is a simply connected rational Poincar\'{e} duality complex whose dimension coincides with its formal dimension and the above homotopy cofibration corresponds to the attachment of its formal top cell. 
  In particular, we have an isomorphism 
 \[
 \widetilde{H}^\ast(M;\mathbb{Q})\cong \widetilde{H}^\ast(N;\mathbb{Q})\oplus \mathbb{Q}\{[M]\},
 \] 
 and thus,  $H^\ast(M;\mathbb{Q})$ is not generated by a single element, just as $H^\ast(N;\mathbb{Q})$ is not by assumption. Combining with the assumption that $M$ is rationally elliptic, Proposition \ref{Minert-prop} implies that $h$ is inert after a further localization away from finitely many primes. Then the theorem follows directly from Theorem \ref{elliptic-hyper-thm-intro}. 
\end{proof}

\bibliographystyle{amsalpha}

\end{document}